\newcommand{\rright}{\right}
\newcommand{\lleft}{\left}
\newcommand{\rrvert}{\vert}
\newcommand{\llvert}{\vert}
\newtheorem{prop}{Proposition}
\newtheorem{cor}{Corollary}
\newtheorem{lemma}{Lemma}
\newcommand{\pr}{\mathbb{P}}
\newcommand{\esp}{\mathbb{E}}
\newcommand{\ba}{\boldsymbol{\alpha}}
\newcommand{\bb}{\boldsymbol{\beta}}
\newcommand{\bmu}{\boldsymbol{\mu}}
\newcommand{\bpi}{\boldsymbol{\pi}}
\newcommand{\Sig}{\mathfrak{S}}
\newcommand{\X}{\mathcal{X}}
\newcommand{\I}{\mathcal{I}}
\newcommand{\Q}{\mathcal{Q}}
\newcommand{\calL}{\mathcal{L}}
\newcommand{\N}{\mathbb{N}}
\newcommand{\U}{\mathcal{U}}
\newcommand{\tU}{\tilde{\mathcal U}}
\newcommand{\eps}{\varepsilon}
\newcommand{\argmax}{\operatorname{\arg\max}}
\newcommand{\eqref}[1]{(\ref{#1})}
\newcommand{\diff}{\operatorname{diff}}
\renewcommand{\smallsetminus}{\setminus}
\newcommand{\bin}{\mathrm{bin}}
\begin{document}
\begin{frontmatter}

\title{Convergence of the groups posterior distribution in latent or
stochastic block~models}
\runtitle{Groups posterior in LBMs and SBMs}

\begin{aug}
\author[1]{\inits{M.}\fnms{Mahendra} \snm{Mariadassou}\thanksref{1}\ead[label=e1]{mahendra.mariadassou@jouy.inra.fr}} 
\and
\author[2]{\inits{C.}\fnms{Catherine} \snm{Matias}\thanksref{2}\ead[label=e2]{catherine.matias@genopole.cnrs.fr}}
\address[1]{INRA, UR1077 Unit\'{e} Math\'{e}matique, Informatique et G\'{e}nome, 78350
Jouy-en-Josas, France.\\ \printead{e1}}
\address[2]{Laboratoire Statistique et G\'{e}nome, Universit\'{e} d'\'
{E}vry Val d'Essonne,
UMR CNRS 8071 -- USC INRA, 23 bvd de France, 91 037 \'{E}vry, France.
\printead{e2}}
\end{aug}

\received{\smonth{6} \syear{2012}}
\revised{\smonth{11} \syear{2013}}

%
\begin{abstract}
We propose a unified framework for studying both latent and
stochastic block models, which are used to cluster simultaneously
rows and columns of a data matrix. In this new framework, we study
the behaviour of the groups posterior distribution, given the data.
We characterize whether it is possible to asymptotically recover the
actual groups on the rows and columns of the matrix, relying on a
consistent estimate of the parameter. In other words, we establish
sufficient conditions for the groups posterior distribution to
converge (as the size of the data increases) to a Dirac mass located
at the actual (random) groups configuration. In particular, we
highlight some cases where the model assumes symmetries in the
matrix of connection probabilities that prevents recovering the
original groups. We also discuss the validity of these results when
the proportion of non-null entries in the data matrix converges to
zero.
\end{abstract}

%
\begin{keyword}
\kwd{biclustering}
\kwd{block clustering}
\kwd{block modelling}
\kwd{co-clustering}
\kwd{latent block model}
\kwd{posterior distribution}
\kwd{stochastic block model}
\end{keyword}

\end{frontmatter}

\section{Introduction}\label{intro}

Cluster analysis is an important tool in a variety of scientific areas
including pattern recognition, microarrays analysis, document
classification and more generally data mining. In these contexts, one
is interested in data recorded in a table or matrix, where for
instance rows index objects and columns index features or variables.
While the majority of clustering procedures aim at clustering either
the objects or the variables, we focus here on procedures which
consider the two sets simultaneously and organize the data into
homogeneous blocks. More precisely, we are interested in
probabilistic models called latent block models (LBMs), where both
rows and columns are partitioned into latent groups
\cite{Govaert_Nadif_PatternRec03}.

Stochastic block models (SBMs, \cite{Holland_etal_83}) may be viewed
as a particular case of LBMs where data consists in a random graph
which is encoded in its adjacency matrix. An adjacency matrix is a
square matrix where rows and columns are indexed by the same set of
objects and an entry in the matrix describes the relation between two
objects. For instance, binary random graphs are described by a binary
matrix where entry $(i,j)$ equals 1 if and only if there is an edge
between nodes $(i,j)$ in the graph. Similarly, weighted random graphs
are encoded in square matrices where the entries describe the edges
weights (the weight being 0 in case of no edge between the two nodes).
In this context the partitions on rows and columns of the square
matrix are further constrained to be identical.

To our knowledge and despite their similarities, LBMs and SBMs have
never been explored from the same point of view. We aim at presenting
a unified framework for studying both LBMs and SBMs. We are more
precisely interested in the behaviour of the groups posterior
distribution, given the data. Our goal is to characterize whether it
is possible to asymptotically recover the actual groups on the rows
and columns of the matrix, relying on a consistent estimate of the
parameter. In other words, we establish sufficient conditions for the
groups posterior distribution to converge (as the size of the data
increases) to a Dirac mass located at the actual (random) groups
configuration. In particular, we highlight some cases where the model
assumes symmetries in the matrix of connection probabilities that
prevents recovering the original groups (see
Theorem~\ref{thm:cv_posterior} and following corollaries). Note that
the asymptotic framework is particularly suited in this context
as the datasets are often huge.

One of the first occurrences of LBMs appears in the pioneering work
\cite{Hartigan} under the name \emph{three partitions}. LBMs were
later developed as an intuitive extension of the finite mixture model,
to allow for simultaneous clustering of objects and features. Many
different names are used in the literature for such procedures, among
which we mention block clustering, block modelling, biclustering,
co-clustering and two-mode clustering. All of these procedures differ
through the type of clusters they consider. LBMs induce a specific
clustering on the data matrix, namely we \emph{partition} the rows and
columns of the data matrix and the data clusters are restricted to
\emph{cartesian products} of a row cluster and a column cluster.
Frequentist parameter estimation procedures for LBMs have been
proposed in \cite{Govaert_Nadif_PatternRec03,Govaert_Nadif_CSDA08} for
binary data and in \cite{Govaert_Nadif_CS10} for Poisson random
variables. A Bayesian version of the model has been introduced in
\cite{DeSarbo} for random variables belonging to the set $[0,1]$,
combined with a Markov chain Monte Carlo (MCMC) procedure to estimate
the model parameters. Moreover, model selection in a Bayesian setting
is performed at the same time as parameter estimation in
\cite{Wyse_Friel}, that considers two different types of models: a
Bernoulli LBM for binary data and a Gaussian one for continuous
observations. All of these parameter estimation procedures also
provide a clustering of the data, based on the groups posterior
distribution computed at the estimated parameter value. In the
following, \emph{a posteriori estimation} of the groups refers to
maximum a
posteriori (MAP) procedure on the groups posterior distribution
computed at some estimated parameter value.
To our knowledge,
there is no result in the literature about the quality of such
clustering procedures nor about convergence of the groups posterior
distribution in LBMs.

SBMs were (re)-discovered many different times in the literature, and
introduced at first in social sciences to study relational data
(see, for instance, \cite{Daudin,FH82,Holland_etal_83,SN97}). In this
context, the data consists in a random graph over a set of nodes, or
equivalently in a square matrix (the adjacency matrix) whose entries
characterize the relation between two nodes. The nodes are partitioned
into latent groups so that the clustering of the rows and columns of
the matrix is now constrained to be identical. Various parameter
estimation procedures have been proposed in this context, from
Bayesian strategies \cite{NS01,SN97}, to variational approximations
of expectation maximization (EM) algorithm
\cite{Daudin,Mariadassou_10,Picard_BMC} or variational Bayes
approaches \cite{latouche_Bayes_EM}, online procedures
\cite{Zanghi_PatternRec08,Zanghi_AAS10} and direct methods
\cite{Ambroise_Matias,Channarond}. Note that most of these works are concerned
with binary data and only some of the most recent of them deal with
weighted random graphs \cite{Ambroise_Matias,Mariadassou_10}.

In each of these procedures, a clustering of the graph nodes is
performed according to the groups posterior distribution (computed at the
estimated parameter value). The behaviour of this posterior
distribution for binary SBMs is studied in \cite{Alain_JJ_LP}. These
authors establish two different results. The first one (Theorem~3.1 in \cite{Alain_JJ_LP})
states that at the true parameter value, the
groups posterior distribution converges to a Dirac mass at the actual
value of groups configuration (controlling also the corresponding rate
of convergence). This result is valid only at the true parameter
value, while the above mentioned procedures rely on the groups
posterior distribution at an estimated value of the parameter instead
of the true one. Note also that this result establishes a convergence
under the \emph{conditional} distribution of the data, given the
actual configuration on the groups. However, as this convergence is
uniform with respect to the actual configuration, the result also
holds under the unconditional distribution of the observations. The
second result they obtain on the convergence of the groups posterior
distribution (Proposition~3.8 in \cite{Alain_JJ_LP}) is valid at an
estimated parameter value, provided this estimator converges at rate
at least $n^{-1}$ to the true value, where $n$ is the number of nodes
in the graph (number of rows and columns in the square data
matrix). Note that this latter assumption is not harmless as it is not
established that such an estimator exists, except in a particular
setting \cite{Ambroise_Matias}; see also \cite{Gazal} for empirical
results. There are thus many differences between our result
(Theorem~\ref{thm:cv_posterior} and following corollaries) and theirs:
we provide a result for any parameter value in the neighborhood of the
true value, we work with non-necessarily binary data and our work
encompasses both SBMs and LBMs. We however mention that the main goal
of these authors is different from ours and consists in establishing
the consistency of maximum likelihood and variational estimators in
SBMs.

We stress here that our result relies on the existence of
\emph{consistent} parameter estimates (without any constraint on
the convergence rate). Such consistency results
have been established for instance, in \cite{Ambroise_Matias} in the specific
context of affiliation (namely only two connections types are
considered: intra-group and inter-group connections) for binary or
weighted SBMs; in \cite{Alain_JJ_LP} for binary (possibly
directed) SBMs and concerning the connectivity parameters (a result on the
groups proportions requires an additional assumption whose validity
is not yet established) and also by \cite{Bickel_Chen} for binary
SBMs where the parameter estimates are derived from groups
estimators that rely on specific consistent modularities.
As already stressed in
the above paragraph, the behaviour of the groups posterior
distribution is not fully resolved in those contexts.
Moreover, to our knowledge, consistency results have not
been (theoretically) established in LBMs but we believe that our
common framework enables to obtain results in LBMs similar as those
obtained in SBMs.

Let us now discuss the articles \cite{Bickel_Chen,Choi_etal} and
\cite{Rohe_Chat} on
the performance of clustering procedures for random graphs, as well as
the very recent works \cite{Flynn_Perry,Rohe_Yu}
on the performance of co-clustering procedures. Those articles, which
are of a
different nature from ours, establish that under some conditions, the
fraction of misclassified nodes (resulting from different algorithmic
procedures) converges to zero as the number of nodes increases. These
results, with the exception of \cite{Flynn_Perry}, apply to binary
graphs only, while we shall deal both with binary and weighted graphs;
as well as real-valued array data. Moreover, they establish results
on procedures that estimate parameters while clustering the
data, while we are rather interested in MAP based procedures
relying on any consistent parameter estimate. In
\cite{Bickel_Chen}, Bickel and Chen show that groups estimates based
on the use of
different modularities are consistent in the sense that with
probability tending to one, these recover the original groups. Rohe
and Yu \cite{Rohe_Yu} are concerned with a framework in which nodes
of a
graph belong to two groups: a \emph{receiver} group and a
\emph{sender} group. This is a refinement of standard SBM, which
assumes equal sender and receiver groups, and is motivated by the
study of directed graphs. The results of \cite{Rohe_Yu} are very
similar to those of \cite{Rohe_Chat} that apply on symmetric binary
graphs: they propose a classification algorithm based on spectral
clustering that achieves vanishing classification error rate. Flynn
and Perry \cite{Flynn_Perry} share our framework with a few
exceptions: they
replace the profile likelihood (used for instance,
in \cite{Bickel_Chen}) with a rate function allowing for model
misspecification. Besides, they model sparsity with a scaling
parameter acting directly on the mean interaction value instead of
inflating the number of zero-valued interactions with a Bernoulli
variable as we do (see Section~\ref{sec:weighted}). They essentially
extend the results of
\cite{Choi_etal} to weighted graphs. The focus on the mean
interaction value allows for model misspecification but prevents the
detection of groups that differ mostly in interaction variance.
Indeed, the simulation study from \cite{Flynn_Perry} considers only
groups varying in their mean interaction value, while we can detect
groups varying through their variance for instance. We also mention
that \cite{Choi_etal,Rohe_Chat} and \cite{Rohe_Yu} are
concerned with an asymptotic setting where the number of groups is
allowed to grow with network size and the average network degree grows
at least nearly linearly \cite{Rohe_Chat,Rohe_Yu} or
poly-logarithmically \cite{Choi_etal} in this size. In
Section~\ref{sec:pi_vers0} of the present work, we explore the
validity of our results in a similar framework, by assuming that the
numbers of groups remain fixed while the connections probabilities
between groups converge to zero. Finally and most importantly, note
that \emph{all these works but} \cite{Bickel_Chen} propose
convergence results in a setup of \emph{independent} random
observations (and Bernoulli distributed, except for \cite
{Flynn_Perry} that consider more general distributions),
viewing the latent groups as parameters instead of random variables.
On the contrary in our context,
the observed random variables are non-independent. This makes a
tremendous difference in the validity of the statements.

We also want to outline that many different generalization allowing
for overlapping groups exist, both for LBMs and SBMs. We refer the
interested reader to the works \cite{DeSarbo} for LBMs and
\cite{Airoldi,Latouche_overlap} in the case of SBMs, as well as the
references therein. However in this work, we restrict our attention
to non-overlapping groups.

This work is organized as follows. Section~\ref{sec:model} describes
LBMs and SBMs and introduces some important concepts such as
equivalent group configurations. Section~\ref{sec:post-distr}
establishes general and sufficient conditions for the groups posterior
probability to converge (with large probability) to a (mixture of)
Dirac mass, located at (the set of configurations equivalent to) the
actual random configuration. In particular, we discuss the cases where
it is likely that groups estimation relying on maximum posterior
probabilities might not converge. Section~\ref{sec:applis}
illustrates our main result, providing a large number of examples
where the above mentioned conditions are satisfied. Finally, in
Section~\ref{sec:pi_vers0} we explore the validity of our results when
the connections probabilities between groups converge to zero. This
corresponds to datasets with an asymptotically decreasing density of
non-null entries. Some technical proofs are postponed to the
\hyperref[sec:appendix]{Appendix}.

\section{Model and notation}\label{sec:model}
\subsection{Model and assumptions}

We observe a matrix $\mathbf{X}_{n,m}:=\{X_{ij}\}_{1\le i\le n, 1\le
j\le m}$ of
random variables in some space set $\X$, whose distribution is
specified through latent groups on the rows and columns of the matrix.

Let $Q\ge1$ and $L\ge1$ denote the number of latent groups
respectively on the rows and columns of the matrix. Consider the
probability distributions $\ba=(\alpha_1,\ldots, \alpha_Q)$ on $\Q=
\{1,\ldots, Q\}$ and $\bb=(\beta_1,\ldots,\beta_L)$ on
$\calL=\{1,\ldots,L\}$, such that
\[
\forall q \in\Q, \forall l\in\calL,\qquad  \alpha_q, \beta_l>0
\quad \mbox{and}\quad  \sum_{q=1}^Q
\alpha_q=1,\qquad  \sum_{l=1}^L
\beta_l=1.
\]
Let $\mathbf{Z}_n:=Z_1,\ldots, Z_n$ be independent and identically
distributed (i.i.d.) random variables, with distribution $\ba$ on
$\Q$ and $\mathbf{W}_{m}:=W_1,\ldots, W_m$ i.i.d. random variables with
distribution $\bb$ on $\calL$. Two different cases will be considered
in this work:
\begin{LBM*}
 In this case, the random
variables $\{Z_i\}_{1\le i\le n}$ and $\{W_j\}_{1\le j\le m}$ are
independent. We let $\I=\{1,\ldots,n\}\times\{1,\ldots,m\}$ and
$\bmu=\ba^{\otimes n} \otimes\bb^{\otimes m}$ the distribution of
$(\mathbf{Z}_n, \mathbf{W}_{m}) :=(Z_1,\dots,Z_n,W_1,\dots,W_m)$
and set $U_{ij} =
(Z_i, W_j)$ for $(i,j)$ in $\I$. The random vector $(\mathbf
{Z}_n,\mathbf{W}_{m})$
takes values in the set $\U:=\Q^n\times\calL^m$ whereas the
$\{U_{ij}:=(Z_i,W_j)\}_{(i,j)\in\I}$ are non-independent random
variables taking values in the set $(\Q\times\calL)^{nm}$.
\end{LBM*}
\begin{SBM*} In this case, we have $n=m,
\Q=\calL$, $Z_i=W_i$ for all $1\le i \le n$ and $\ba=\bb$. We let
$\I=\{1,\ldots,n\}^2$, $\bmu= \ba^{\otimes n}$ the distribution of
$\mathbf{Z}_n$ and set $U_{ij} = (Z_i,Z_j)$ for $(i,j) \in\I$. The random
variables $\{U_{ij} :=(Z_i,Z_j)\}_{(i,j)\in\I}$ are
not independent and take values in the set
\[
\U=\bigl\{\bigl\{(q_i,q_j)\bigr\}_{ (i,j)\in\I} ;
\forall i\in \{1,\ldots,n\} , q_i\in\Q\bigr\}.
\]
This case corresponds to the observation of a random graph whose
adjacency matrix is given by $\{X_{ij}\}_{1\le i, j \le n }$. As
particular cases, we may also consider graphs with no self-loops in
which case $\I=\{1,\ldots,n\}^2\smallsetminus\{(i,i); 1\le i\le
n\}$. We may also consider undirected random graphs, possibly with
no self-loops, by imposing symmetric adjacency matrices
$X_{ij}=X_{ji}$. In this latter case, $\I=\{1\le i<j \leq n\}$.
\end{SBM*}

In the following, we refer to each of these two cases by indicating
the symbols (LBM) and (SBM). Whenever possible, we give general
formulas valid for the two cases, and which could be simplified
appropriately in SBM. We introduce a matrix of connectivity
parameters $\bpi=(\pi_{ql})_{(q,l)\in\Q\times\calL}$ belonging to
some set of matrices $\Pi_{\Q\calL}$ whose coordinates $\pi_{ql}$
belong to some set $\Pi$ (note that $\Pi_{\Q\calL}$ may be different
from the product set $\Pi^{QL}$). Now, conditional on the latent
variables $\{U_{ij}=(Z_i,W_j)\}_{(i,j)\in\I}$, the observed random
variables $\{X_{ij}\}_{(i,j)\in\I}$ are assumed to be independent,
with a parametric distribution on each entry depending on the
corresponding rows and columns groups. More precisely, conditional on
$Z_i=q$ and $W_j=l$, the random variable $X_{ij}$ follows a
distribution parameterized by $\pi_{ql}$. We let $f(\cdot;\pi_{ql})$
denote its density with respect to some underlying measure (either the
counting or Lebesgue measure).

The model may be summarized as follows:
%
\begin{equation}
\label{eq:model_gen} %
\begin{array} {l@{\quad}l} \bullet&(\mathbf{Z}_n,
\mathbf{W}_{m}) \mbox{ latent random variables in $\U$ with
distribution given by }\bmu,
\\
\bullet&\mathbf{X}_{n,m}= \{X_{ij}\}_{(i,j)\in\I} \mbox{
observations in } \X,
\\
\bullet& \pr(\mathbf{X}_{n,m}| \mathbf{Z}_n,
\mathbf{W}_{m}) =\bigotimes_{(i,j)\in\I} \pr
(X_{ij} | Z_{i}, W_j ),
\\
\bullet& \forall{(i,j)\in\I} \mbox{ and } \forall(q,l) \in \Q\times\calL, \mbox{
we have } X_{ij} | (Z_{i}, W_{j})=(q,l) \sim
f(\cdot; \pi_{ql}). \end{array} %
\end{equation}
We consider the following parameter set
\[
\Theta= \bigl\{ \theta= (\bmu,\bpi) ; \bpi\in\Pi_{\Q\calL} \mbox{ and }
\forall(q,l)\in\Q\times \calL , \alpha_q \ge\alpha_{\min} >
0 , \beta_l \ge\beta_{\min} >0 \bigr\} ,
\]
and define $\alpha_{\max} = \max\{\alpha_q ;q \in\Q; \theta\in
\Theta\}$ and similarly $\beta_{\max} = \max\{\beta_l ;l \in\calL;
\theta\in\Theta\}$. We let $\mu_{\min} :=\alpha_{\min}
\wedge
\beta_{\min}$ and $\mu_{\max} :=\alpha_{\max} \vee
\beta_{\max}$. Note that in SBM, $\mu_{\min} $ (resp. $\mu_{\max}$)
reduces to $\alpha_{\min}$ (resp. $\alpha_{\max}$). We denote by
$\mathbb{P}_{\theta}$ and $\mathbb{E}_{\theta}$ the probability
distribution and expectation under
parameter value $\theta$. In the following, we assume that the
observations $\mathbf{X}_{n,m}$ are drawn under the true parameter
value $\theta^{\star}
\in\Theta$. We let $\mathbb{P}_{\star}$ and $\mathbb{E}_{\star}$
respectively, denote
probability and expectation under parameter value $\theta^{\star}$.
We now
introduce a necessary condition for the connectivity parameters to be
identifiable from $\pr_\theta$.

\begin{assumption}\label{hyp:identifiability}
\begin{enumerate}[(ii)]
\item[(i)] The parameter $\pi\in\Pi$ is identifiable from the
distribution $f(\cdot;\pi)$, namely $f(\cdot;\pi)=f(\cdot;\pi')
\Rightarrow\pi=\pi'$.
\item[(ii)] For all $q \neq q' \in\Q$, there exists some $l \in
\calL$ such that $ \pi_{ql} \neq\pi_{q'l}$. Similarly, for all $l
\neq l' \in\calL$, there exists some $q \in\Q$ such that $
\pi_{ql} \neq\pi_{ql'}$.
\end{enumerate}
\end{assumption}

Assumption~\ref{hyp:identifiability} will be in force throughout this
work. Note that it is a very natural assumption. In particular, (i)
will be satisfied by any reasonable family of distributions and if
(ii) is not satisfied, there exist for instance, two row groups $q\neq
q'$ with the same behavior. These groups (and thus the corresponding
parameters) may then not be distinguished relying on the marginal
distribution of $\pr_\theta$ on the observation space $\X^\N$. Note
also that Assumption~\ref{hyp:identifiability} is in general not
sufficient to ensure identifiability of the parameters in LBM or SBM.
Identifiability results for SBM have first been given in a particular
case in \cite{ECJ} and then later more thoroughly discussed in
\cite{ident_mixnet} for undirected, binary or weighted random graphs.
See also \cite{Alain_JJ_LP} for the case of directed and binary random
graphs.

In the following, for any subset $A$ we denote by either $1_A$ or
$1\{A\}$ the indicator function of event $A$, by $|A|$ its cardinality
and by $\bar A$ the complementary subset (in the ambient set).

\subsection{Equivalent configurations}
First of all, it is important to note that the classical label
switching issue that arises in any latent variable model also takes
place in LBMs and SBMs. As such, any permutation on the labels of the
rows and columns groups will induce the same distribution on the data
matrix. To be more specific, we let $\Sig_Q$ (resp. $\Sig_L$) be the
set of permutations of $\Q$ (resp. $\calL$). In the following, we
define $\mathfrak{S}_{\Q\calL}$ to be either the set $\Sig_Q\times
\Sig_L$ (LBM) or
the set $\{(s,s) ; s \in\Sig_Q\}$ (SBM). We consider some $\sigma
\in\mathfrak{S}_{\Q\calL}$ and for any parameter value $\theta
=(\bmu,\bpi)$ we denote
by $\sigma(\theta)$ the parameter induced by permuting the labels of
rows and columns groups according to $\sigma$. Then label switching
corresponds to the fact that
%
\begin{equation}
\label{eq:label_switch} \mathbb{P}_{\theta}=\mathbb{P}_{\sigma(\theta)} .
\end{equation}
Now in LBM and SBM, there exists an additional phenomenon,
that is specific to these models and comes from the fact that the
distribution of any random variable depends on two different latent
ones. Let us explain this now. In a classical latent variable model
where the distribution of individuals belonging to group $q$ is
characterised by the parameter $\pi_q$, identifiability conditions
will require that $\pi_q\neq\pi_l$ for any two different groups
$q\neq l$. Now, when considering two groups characterising the
distribution of one random variable, it may happen that for instance
$\pi_{ql}=\pi_{q'l}$ for two different groups $q\neq q'$. Indeed, the
groups $q,q'$ may be differentiated through their connectivity to
other groups than the group $l$ (see point (ii) in
Assumption~\ref{hyp:identifiability}). As a consequence, if the
parameter matrix $\bpi$ has some symmetries (which is often the case
for model parsimony reasons), it may happen that some row and column
groups can be permuted while the connectivity matrix $\bpi$ remains
unchanged. Note that in this case, the global parameter
$\theta=(\bmu,\bpi)$ remains identifiable as soon as the groups
proportions (characterised by $\bmu$) are different. More precisely,
it may happen that for some $\sigma\in\mathfrak{S}_{\Q\calL}$, we
have $\bpi=\sigma
(\bpi)$ (a case that can
never occur for simple latent variables models) and thus
%
\begin{equation}
\label{eq:equiv_conf} \pr_{\bmu,\bpi}=\mathbb{P}_{\bmu,\sigma(\bpi)}.
\end{equation}
Note the difference between \eqref{eq:label_switch} and
\eqref{eq:equiv_conf}. In particular, whenever $\bmu\neq\sigma(\bmu)$
we have $\pr_{\bmu,\bpi} \neq\mathbb{P}_{\sigma(\bmu),\sigma
(\bpi)}$
and we are not facing an instance of label switching.

We now formalize the concept of \emph{equivalent configurations} that
will enable us to deal with possible symmetries in the parameter
matrices $\bpi$. Note that from a practical perspective, these
subtleties have little impact (in fact, the same kind of impact as the
label switching). But these are necessary for stating our results
rigorously.

For any $(s,t)\in\mathfrak{S}_{\Q\calL}$, we let
\begin{eqnarray*}
\bpi^{s,t} := \bigl( \pi^{s,t}_{ql} \bigr)
_{(q,l)\in\Q\times\calL} : = ( \pi_{s(q)t(l)}) _{(q,l)\in\Q\times\calL} .
\end{eqnarray*}
Fix a subgroup $\Sig$ of $\mathfrak{S}_{\Q\calL}$ and a parameter
set $ \Pi_{\Q
\calL}$. Whenever for any pair of permutations $(s,t) \in
\Sig$ and any parameter $\bpi\in\Pi_{\Q\calL}$ we have
$\bpi^{s,t}=\bpi$, we say that the parameter set $\Pi_{\Q\calL}$ is
\emph{invariant under the action of $\Sig$}. In the following, we
will consider parameter sets that are invariant under some subgroup
$\Sig$. This includes the case where $\Sig$
is reduced to identity. We will moreover exclude from the parameter
set $\Pi_{\Q\calL}$ any point $\bpi$ admitting specific symmetries,
namely such that there exists
\[
(s,t)\in\mathfrak{S}_{\Q\calL}\setminus\Sig\quad  \mbox{satisfying}\quad
\bpi^{s,t}=\bpi.
\]

Note that this corresponds to excluding a subset of null Lebesgue
measure from the parameter set $\Pi_{\Q\calL}$.

\begin{assumption}
\label{hyp:symetrie}
The parameter set $\Pi_{\Q\calL}$ is invariant under the action of
some (maximal) subgroup $\Sig$ of $\mathfrak{S}_{\Q\calL}$.
Moreover,
for any pair of permutations $(s,t)\in\mathfrak{S}_{\Q\calL}
\setminus\Sig$
and any parameter $\bpi\in\Pi_{\Q\calL}$, we
assume that $\bpi^{s,t}\neq\bpi$.
\end{assumption}

\begin{ex}
In SBM, we consider $\Sig=\{(\mathit{Id},\mathit{Id})\}$ where $\mathit{Id}$ is the identity
and let
\[
\Pi_{\Q\calL} =\bigl\{\bpi\in\Pi^{Q^2}; \forall s\in
\Sig_Q, s\neq \mathit{Id}, \mbox{ we have } \bpi^{s,s}\neq \bpi\bigr
\}.
\]
\end{ex}

\begin{ex}[(Affiliation SBM)] \label{ex:affiliation}
In SBM, we consider $\Sig=\{(s,s) ; s \in\Sig_Q\}$ and let
$\Pi_{\Q\calL} = \{ (\lambda-\nu) I_Q + \nu\mathbf
{1}_Q^\intercal
\mathbf{1}_Q ; \lambda,\nu\in(0,1), \lambda\neq\nu\}$.
\end{ex}

In the above notation, $I_Q$ is the identity matrix of size $Q$ and $
\mathbf{1}_Q$ is the size-$Q$ vector filled with $1$s.
Affiliation SBM is a simple two-parameters submodel of SBM commonly
used to detect communities with higher intra- than
inter-groups connectivities. It imposes as much symmetry on
elements of $\Pi_{\Q\calL}$ as allowed by
Assumption~\ref{hyp:identifiability} and constitutes the only model
where configuration equivalence (defined below) is confounded with
label-switching.

In less constrained models and as soon as $\Sig$ is not
reduced to identity, each permutation in $\Sig$
induces many different \emph{equivalent}
configurations. More precisely, for any $(s,t) \in\Sig$
and any $\bpi\in\Pi_{\Q\calL}$, we have
\begin{eqnarray*}
\mathbf{X}_{n,m}| \{\mathbf{Z}_n,\mathbf{W}_{m}
\} \displaystyle \mathop{=}^{d} \mathbf{X}_{n,m}| \bigl\{s(
\mathbf{Z}_n),t(\mathbf{W}_{m})\bigr\} ,\quad  \mbox{under
parameter value } \bpi,
\end{eqnarray*}
where $ \mathop{=}^{d} $ means equality in distribution.

\begin{remark}
In SBM with affiliation structure (see
Example~\ref{ex:affiliation}), the whole group of
permutations $\{(s,s) ; s \in\Sig_Q\}$ leaves the parameter set
$\Pi_{\Q\calL}$ invariant. For more general models, let us denote by
$[q,q']$ the transposition of $q$ and $q'$ in some set $\Q$.
We consider $(s,t)
=([q,q'],[l,l']) \in\mathfrak{S}_{\Q\calL}$.
Then any $\bpi\in
\Pi_{\Q\calL}$ satisfies
\begin{eqnarray*}
&& \forall i \in\Q\setminus\bigl\{q,q'\bigr\},\qquad  \pi_{il}=
\pi_{il'} ,
\\
&& \forall j \in\calL\setminus\bigl\{l,l'\bigr\},\qquad
\pi_{qj}=\pi_{q'j} ,
\\
&&\pi_{ql}=\pi_{q'l'}\quad  \mbox{and} \quad \pi_{q'l}=
\pi_{ql'}.
\end{eqnarray*}
In particular, for Assumption~\ref{hyp:identifiability} to be
satisfied while $([q,q'],[l,l'])$ belongs to $\Sig$
that leaves
$\Pi_{\Q\calL}$ invariant, it is necessary that either $\pi
_{ql}\neq
\pi_{ql'}$ or $\pi_{q'l}\neq\pi_{q'l'}$ (and then both
inequalities are satisfied).
\end{remark}

Note that the parameter sets $\Pi_{\Q\calL}$ that we consider are then
in a one-to-one correspondence with the subgroups $\Sig$. Note also
that we have $|\Sig|\le Q! L!$ (LBM) or $|\Sig|\le Q!$ (SBM).

We now define equivalent configurations in $\U$.

\begin{defi}\label{defi:equivalent_config}
Consider a parameter set $\Pi_{\Q\calL}$ invariant under the action
of some subgroup $\Sig$ of $\mathfrak{S}_{\Q\calL}$
and fix a parameter
value $\bpi\in\Pi_{\Q\calL}$. Any two groups configurations
$(\mathbf{z}_n,\mathbf{w}_m):=(z_{1},\ldots,z_n,w_{1},\ldots,w_m)$ and
$(\mathbf{z}^{\prime}_n,\mathbf{w}^{\prime}_m):=(z'_{1},\ldots
,z'_n,w'_{1},\ldots,w'_m)$ in $\U$
are called \emph{equivalent} (a relation denoted by $(\mathbf
{z}_n,\mathbf{w}_m) \sim
(\mathbf{z}^{\prime}_n,\mathbf{w}^{\prime}_m)$) if and only if
there exists $(s,t) \in\Sig$
such that
\begin{eqnarray*}
\bigl(s\bigl(\mathbf{z}^{\prime}_n\bigr),t\bigl(
\mathbf{w}^{\prime
}_m\bigr)\bigr):=\bigl(s\bigl(z_1'
\bigr),\ldots,s\bigl(z_n'\bigr),t\bigl(w_1'
\bigr),\ldots,t\bigl(w_m'\bigr)\bigr) =(
\mathbf{z}_n,\mathbf{w}_m).
\end{eqnarray*}
We let $\tU$ denote the quotient of $\U$ by this equivalence
relation. Note in particular that if $(\mathbf{z}_n,\mathbf{w}_m)
\sim(\mathbf{z}^{\prime}_n,\mathbf{w}^{\prime}_m)$
then for any $\bpi\in\Pi_{\Q\calL}$, we have
$(\pi_{z_iw_j})_{(i,j)\in\I} = (\pi_{z_i' w_j'})_{(i,j)\in
\I}$.
\end{defi}

For any vector $u=(u_1,\ldots,u_p) \in\mathbb{R}^p$, we let
$\|u\|_0:=\sum_{i=1}^p 1\{u_i\neq0\}$. The distance between two
different configurations $(\mathbf{z}_n,\mathbf{w}_m)\in\tU$ and
$(\mathbf{z}^{\prime}_n,\mathbf{w}^{\prime}_m)\in
\tU$ is
measured via the minimum $\|\cdot\|_0$ distance between any two
representatives of these classes. We thus let
%
\begin{equation}\label{eq:distance}
d\bigl((\mathbf{z}_n,\mathbf{w}_m),\bigl(
\mathbf{z}^{\prime}_n,\mathbf {w}^{\prime}_m
\bigr)\bigr) := \min\bigl\{ \bigl\|\mathbf{z}_n-s\bigl(
\mathbf{z}^{\prime}_n\bigr)\bigr\|_0 +\bigl\|\mathbf
{w}_m-t\bigl(\mathbf{w}^{\prime}_m\bigr)
\bigr\|_0 ; (s,t)\in\Sig\bigr\} .
\end{equation}
Note that this distance is well-defined on the space $\tU$. Note also
that when $\Sig$ is reduced to identity, the distance $d(\cdot,\cdot)$ is an
ordinary $\ell_0$ distance (up to a scale factor $2$ in SBM).

\subsection{Most likely configurations}

Among the set of all (up to equivalence) configurations $\tU$, we
shall distinguish some which are well-behaved in the following
sense. For any groups $q\in\Q$ and $l \in\calL$, consider the events
\begin{eqnarray*}
A_{q} = \Biggl\{\omega\in\Omega; N_{q}\bigl(
\mathbf{Z}_n(\omega)\bigr) :=\sum_{i=1}^n
1\bigl\{Z_i(\omega)=q\bigr\}< n \mu_{\min} / 2 \Biggr\} ,
\end{eqnarray*}
 and
 \begin{eqnarray*} B_{l} = \Biggl\{\omega\in\Omega; N_{l}
\bigl(\mathbf{W}_{m}(\omega)\bigr) :=\sum_{j=1}^m
1\bigl\{W_j(\omega)=l\bigr\}< m \mu _{\min} /2 \Biggr\} .
\end{eqnarray*}
Since $N_{q}(\mathbf{Z}_n) $ and $ N_{l}(\mathbf{W}_{m}) $ are sums
of i.i.d. Bernoulli
random variables with respective parameters $\alpha_{q}^\star$ and
$\beta_l^\star$, satisfying $\alpha_{q}^\star\wedge\beta_l^\star
\ge
\mu_{\min}$, a standard Hoeffding's Inequality gives
\[
\mathbb{P}_{\star}(A_{q}\cup B_l) \le\exp
\bigl[-n\bigl(\alpha_{q}^\star\bigr)^2/2\bigr]+
\exp\bigl[-m \bigl(\beta_{l}^\star\bigr)^2/2
\bigr]\le2 \exp\bigl[-(n \wedge m) \mu_{\min
}^2/2\bigr] .
\]
Taking an union bound, we obtain
\[
\mathbb{P}_{\star}\biggl(\displaystyle \bigcup_{(q,l) \in
\Q\times\calL} (A_{q}\cup
B_l)\biggr) \le \lleft\{ %
\begin{array} {l@{\qquad}l} 2QL \exp
\bigl[-(n \wedge m)\mu_{\min}^2/2\bigr] & \mbox{(LBM)},
\\\noalign{\vspace*{2pt}}
2Q \exp\bigl[-n \alpha_{\min}^2/2\bigr] & \mbox{(SBM)}.
\end{array} %
\rright.
\]
Now, consider the event $\Omega_0$ defined by
%
\begin{eqnarray}
\label{eq:Omega0} \Omega_0 &:=&\bigl\{\omega\in\Omega; \forall(q,l)\in\Q
\times \calL, N_{q}\bigl(\mathbf{Z}_n(\omega)\bigr) \ge n
\mu_{\min} / 2 \mbox{ and } N_{l}\bigl(\mathbf{W}_{m}(
\omega)\bigr) \ge m \mu_{\min} / 2\bigr\}
\nonumber
\\[-8pt]\\[-8pt]
&= &\bigcap_{(q,l) \in\Q\times\calL} (\bar{A}_{q}\cap\bar{B}_l),\nonumber
\end{eqnarray}
which has $\mathbb{P}_{\star}$-probability larger than $ 1 - 2QL \exp
[-(n \wedge m)
\mu_{\min}^2/2]$ (LBM) or larger than $ 1 - 2Q \exp[-n
\alpha_{\min}^2/2]$ (SBM) and its counterpart $\U^0$ defined by
%
\begin{equation}
\label{eq:good_set} \U^0 = \bigl\{ (\mathbf{z}_n,
\mathbf{w}_m) \in\U; \forall(q,l)\in\Q \times\calL, N_{q}(
\mathbf{z}_n) \ge n \mu_{\min} / 2 \mbox{ and }
N_{l}(\mathbf{w}_m) \ge m \mu_{\min} / 2 \bigr
\},
\end{equation}
where $N_{q}(\mathbf{z}_n) :=\sum_{i=1}^n 1\{z_i=q\}$ and $
N_{l}(\mathbf{w}_m)
$ is defined similarly. We extend this notation up to equivalent
configurations, by letting $\tU^0$ be the set of configurations
$(\mathbf{z}_n
,\mathbf{w}_m) \in\tU$ such that at least one (and then in fact all)
representative in the class belongs to $\U^0$.
Note that neither $N_q(\mathbf{z}_n)$ nor $N_l(\mathbf{w}_m)$ are
properly defined on
$\tU$, as these quantities may take different values for equivalent
configurations. However, as soon as one representative $(\mathbf
{z}_n,\mathbf{w}_m)$
belongs to $\U^0$, we both get $N_q(\mathbf{z}_n') \ge n \mu_{\min}
/ 2$ and
$N_l(\mathbf{w}_m') \ge m \mu_{\min} / 2$ for any $(\mathbf
{z}_n',\mathbf{w}_m')\sim(\mathbf{z}_n,
\mathbf{w}_m)$.
In the following, some properties will only be valid on the set of
configurations $\tU^0$.

\section{Groups posterior distribution}
\label{sec:post-distr}
\subsection{The groups posterior distribution}
We provide a preliminary lemma on the expression of the groups
posterior distribution.

\begin{lemma}\label{lem:posterior}
For any $ n,m\ge1$ and any $\theta\in\Theta$, the groups posterior
distribution writes for any $
(\mathbf{z}_n,\mathbf{w}_m)\in\U$,
%
\begin{eqnarray}
\label{eq:posterior} p_{n,m}^{\theta}(\mathbf{z}_n,
\mathbf{w}_m) &:=& \pr_\theta\bigl((\mathbf{Z}_n,
\mathbf{W}_{m})=(\mathbf{z}_n,\mathbf {w}_m)|
\mathbf{X}_{n,m}\bigr)
\nonumber
\\
&\propto&\lleft\{ %
\begin{array} {l@{\qquad}l}  \biggl(\displaystyle \prod
_{(i,j)\in\I} f(X_{ij};\pi_{z_i
w_j}) \biggr)
\Biggl( \displaystyle \prod_{i = 1}^n
\alpha_{z_i} \Biggr) \Biggl( \displaystyle \prod_{j=1}^m
\beta_{w_j} \Biggr) & \mbox{(LBM)},
\\\noalign{\vspace*{4pt}}
\biggl(\displaystyle \prod_{(i,j)\in\I} f(X_{ij};
\pi_{z_i
z_j}) \biggr) \Biggl( \displaystyle \prod_{i = 1}^n
\alpha_{z_i} \Biggr) & \mbox{(SBM)},
\end{array} %
\rright.
\end{eqnarray}
where $\propto$ means equality up to a normalizing constant.
\end{lemma}

The proof of this lemma is straightforward and therefore omitted.

In the following, we will consider the main term in the log ratio
$\log p_{n,m}^{\theta}(\mathbf{z}_n^{\star},\mathbf{w}_m^{\star})
-\log p_{n,m}^{\theta}(\mathbf{z}_n,\mathbf{w}_m) $ for two different
configurations $(\mathbf{z}_n^{\star},\mathbf{w}_m^{\star}),
(\mathbf{z}_n,\mathbf{w}_m) \in\U$. More precisely, we
introduce
%
\begin{equation}
\label{eq:delta} \forall\bigl(\mathbf{z}_n^{\star},
\mathbf{w}_m^{\star}\bigr), (\mathbf {z}_n,
\mathbf{w}_m) \in\tU, \qquad \delta^{\bpi}\bigl(\mathbf{z}_n^{\star},
\mathbf{w}_m^{\star},\mathbf {z}_n,
\mathbf{w}_m\bigr)= \sum_{(i,j)\in\I} \log
\biggl(\frac{f(X_{ij};\pi_{z^\star_iw^\star_j})}{ f(X_{ij};\pi
_{z_iw_j})} \biggr).
\end{equation}
Note that this quantity is well-defined on $\tU\times\tU$. We also consider
its expectation, under true parameter value $\theta^{\star}$ and
conditional on
the event $(\mathbf{Z}_n,\mathbf{W}_{m})=(\mathbf{z}_n^{\star
},\mathbf{w}_m^{\star})$; namely for any $ (\mathbf{z}_n^{\star
},\mathbf{w}_m^{\star})$ and
$(\mathbf{z}_n,\mathbf{w}_m) \in\tU$, we let
%
\begin{equation}
\label{eq:esp_delta} \Delta^{\bpi}\bigl(\mathbf{z}_n^{\star},
\mathbf{w}_m^{\star},\mathbf {z}_n,
\mathbf{w}_m\bigr)= \sum_{(i,j)\in\I}
\mathbb{E}_{\star} \biggl( \log \biggl(\frac{f(X_{ij};\pi_{z^\star_iw^\star_j})}{ f(X_{ij};\pi
_{z_iw_j})} \biggr) \bigl|(
\mathbf{Z}_n,\mathbf{W}_{m}) =\bigl(\mathbf{z}_n^{\star},
\mathbf {w}_m^{\star}\bigr) \biggr).
\end{equation}
Probabilities and expectations conditional on $(\mathbf{Z}_n,\mathbf
{W}_{m})=(\mathbf{z}_n^{\star},\mathbf{w}_m^{\star})$
and under parameter value $\theta^{\star}$ will be denoted by
$\mathbb{P}_{\star}^{\mathbf{z}_n^{\star}\mathbf{w}_m^{\star}}
$ and $\mathbb{E}_{\star}^{\mathbf{z}_n^{\star}\mathbf{w}_m^{\star
}}$, respectively.

\subsection{Assumptions on the model}
The results of this section are valid as long as the family of
distributions $\{f(\cdot;\pi); \pi\in\Pi\}$ satisfies some
properties. We thus formulate these as assumptions in this general
section, and establish later that these assumptions are satisfied in
each particular case to be considered.

The first of these assumptions is a (conditional on the configuration)
concentration inequality on the random variable
$\delta^{\bpi}(\mathbf{Z}_n,\mathbf{W}_{m},\mathbf{z}_n,\mathbf
{w}_m)$ around its conditional expectation.
We only require it to be valid for configurations
$(\mathbf{Z}_n,\mathbf{W}_{m})=(\mathbf{z}_n^{\star},\mathbf
{w}_m^{\star})\in\tU^0$. Note that under conditional probability
$\mathbb{P}_{\star}^{\mathbf{z}_n^{\star}\mathbf{w}_m^{\star}}$,
the random variables $\{X_{ij};
(i,j)\in\I\}$ are independent.

\begin{assumption}[(Concentration inequality)] \label{hyp:concentration}
Fix $(\mathbf{z}_n^{\star},\mathbf{w}_m^{\star})\in\tU^0 $
and $(\mathbf{z}_n,\mathbf{w}_m) \in\tU$ such that $(\mathbf
{z}_n,\mathbf{w}_m)\nsim(\mathbf{z}_n^{\star},\mathbf
{w}_m^{\star})$.
There exists some positive function $\psi^\star\dvtx  (0,+\infty)\to
(0,+\infty]$ such that for any $\bpi\in\Pi_{\Q\calL}$ and any
$\eps>0$, we have
%
\begin{eqnarray}
\label{eq:concentration} &&\mathbb{P}_{\star}^{\mathbf{z}_n^{\star}\mathbf{w}_m^{\star}} \bigl( \bigl|
\delta^{\bpi}\bigl(\mathbf{z}_n^{\star},\mathbf
{w}_m^{\star},\mathbf{z}_n,\mathbf{w}_m
\bigr) - \mathbb{E}_{\star}^{\mathbf{z}_n^{\star}\mathbf{w}_m^{\star}} \bigl(\delta^{\bpi}
\bigl(\mathbf{z}_n^{\star},\mathbf{w}_m^{\star
},
\mathbf{z}_n,\mathbf{w}_m\bigr) \bigr) \bigr| \ge
\eps(mr_1+ nr_2) \bigr)
\nonumber\\[-8pt]\\[-8pt]
&&\quad \le2\exp\bigl[- \psi^\star(\eps) (mr_1 +nr_2 )
\bigr] ,\nonumber
\end{eqnarray}
where the distance $d((\mathbf{z}_n^{\star},\mathbf{w}_m^{\star}) ,
(\mathbf{z}_n,\mathbf{w}_m)) $ defined
by \eqref{eq:distance} is attained for some permutations
$(s,t)\in\Sig$ and we set $r_1:=\|\mathbf{z}_n^{\star}-s(\mathbf
{z}_n)\|_0$ and
$r_2:=\|\mathbf{w}_m^{\star}-t(\mathbf{w}_m)\|_0$.
\end{assumption}

\begin{remark} \label{rem:concentration}
Assumption~\ref{hyp:concentration} is reasonable and is often obtained
by an exponential control of the centered random variable
\begin{eqnarray*}
Y_{\pi, \pi'} = \log \biggl( \frac{f(X; \pi)}{f(X; \pi')} \biggr)- \esp_{\pi}
\biggl[ \log \biggl( \frac{f(X; \pi)}{f(X; \pi')} \biggr) \biggr],
\end{eqnarray*}
uniformly in $\pi, \pi' \in\Pi$, where $\esp_{\pi}$ is the
expectation under $f( \cdot, \pi)$. As shown in Section~\ref{sec:scheme},
as soon as
\[
\psi_{\max}(\lambda) :=\sup_{\pi, \pi' \in\Pi}
\esp_{\pi} \bigl(\exp(\lambda Y_{\pi, \pi'})\bigr)
\]
is finite for $\lambda$
in a small open interval $I \subset\mathbb{R}$ around $0$, a
Cramer--Chernoff bound shows that Inequality (\ref{eq:concentration}) is
satisfied with
\begin{eqnarray*}
\psi^\star(\eps) :=\frac{\mu^2_{\min}}{8} \sup_{\lambda\in
I}
\bigl(\lambda\eps- \psi_{\max}(\lambda)\bigr).
\end{eqnarray*}
\end{remark}

The second assumption needed is a bound on the Kullback--Leibler
divergences for elements of the family $\{f(\cdot;\pi) ; \pi\in
\Pi\}$. We let
%
\begin{equation}
\label{eq:Kullback} D\bigl(\pi\parallel \pi'\bigr) :=\int_{\X}
\log \biggl(\frac
{f(x;\pi)}{f(x;\pi')} \biggr) f(x;\pi)\,\mathrm{d}x .
\end{equation}

\begin{assumption}[(Bounds on Kullbak--Leibler
divergences)] \label{hyp:Bound_KL}
We assume that
\begin{eqnarray*}
\kappa_{\max} :=\max\bigl\{D\bigl(\pi \parallel \pi'\bigr) ; \pi,
\pi' \in\Pi\bigr\} <+ \infty.
\end{eqnarray*}
\end{assumption}

Note that $\kappa_{\max} <+\infty$ is automatically satisfied when the
distributions in the family $\{f(\cdot;\pi) $; $ \pi\in\Pi\}$ form
an exponential family with natural parameter $\pi$ belonging to a
compact set $\Pi$.
In particular, this is not the case for Bernoulli distributions when
we authorize some probabilities $\pi$ to be $0$ or $1$, as the
corresponding natural parameter then takes the values $-\infty$ and
$+\infty$. In the
following, we thus exclude for the binary case the possibility that
classes may be almost
never or almost surely connected. We also introduce
%
\begin{equation}
\label{eq:kappa_min} \kappa_{\min}= \kappa_{\min}\bigl(
\bpi^\star\bigr) :=\min \bigl\{D\bigl(\pi^\star_{ql}
\parallel \pi^\star_{q'l'}\bigr) ; (q,l), \bigl(q',l'
\bigr)\in\Q \times \calL,\pi^\star_{ql}\neq
\pi^\star_{q'l'}\bigr\} >0,
\end{equation}
where positivity is a consequence of
Assumption~\ref{hyp:identifiability}. The parameter $\kappa_{\min}$
measures how far apart the non-identical entries of $\boldsymbol{\pi
}^{\star}$ are and is
the main driver of the convergence rate of the posterior
distribution. Note that the Kullback--Leibler
divergence captures the differences between the distributions and
not only their mean values. As we
already mentioned in the \hyperref[intro]{Introduction}, this is in contrast to results
as in \cite{Flynn_Perry} and we may for instance recover groups that
differ only
in their variance.

The last assumption needed is a Lipschitz condition on an integrated
version of the function $\pi\mapsto\log f(x;\pi)$.

\begin{assumption}
\label{hyp:Lipschitz}
There exists some positive constant $L_0$ such that for any
$\bpi,\bpi'\in\Pi_{\Q\calL}$ and any $(q,l),(q',l')\in\Q\times
\calL$, we have
\begin{eqnarray*}
\biggl\llvert \int_{\X} \log\frac{f(x;\pi_{ql})}{f(x;\pi'_{ql})} f(x;
\pi_{q'
l'})\, \mathrm{d}x \biggr\rrvert \le L_0\bigl\|\bpi-\bpi'
\bigr\|_{\infty} .
\end{eqnarray*}
\end{assumption}

\begin{remark} \label{rem:exponential_families} As illustrated in
Section~\ref{sec:comm-expon-famil}, many exponential
families satisfy Assumptions \ref{hyp:concentration} to
\ref{hyp:Lipschitz} as long as the \emph{natural parameter} of that
family (e.g., $\log(p)$ for Poisson distribution or
$\log(p/(1-p))$ for the binomial) is restricted to a compact
set. This includes but is not limited to Gaussian (location or
scale model), Poisson, binary, binomial and multinomial
distributions.
\end{remark}

\subsection{Convergence of the posterior distribution}

We now establish some preliminary results. The first one gives the
behavior of the conditional expectation $\Delta^{\bpi}$ defined
by \eqref{eq:esp_delta} with respect to the distance between the two
configurations $(\mathbf{Z}_n,\mathbf{W}_{m})$ and $(\mathbf
{z}_n,\mathbf{w}_m)$.

\begin{prop}[(Behavior of conditional expectation)]
\label{prop:CondExp_Order}
Under Assumptions \ref{hyp:identifiability}, \ref{hyp:symetrie}
and \ref{hyp:Bound_KL}, the constant $C=2\kappa_{\max} >0$ is such
that for any parameter value $\bpi\in\Pi_{\Q\calL}$ and any
configuration $(\mathbf{z}_n,\mathbf{w}_m) \in\tU$,
we have $\mathbb{P}_{\star}$-almost surely
%
\begin{equation}
\label{eq:CondExp_Order_Sup} \mathbb{E}_{\star}^{\mathbf{Z}_n\mathbf{W}_{m}} \bigl(
\delta^{\bpi
}(\mathbf{Z}_n,\mathbf{W}_{m},
\mathbf{z}_n,\mathbf{w}_m) \bigr) \le\frac{C} 2(mr_1+
nr_2) ,
\end{equation}
where the distance $d((\mathbf{Z}_n,\mathbf{W}_{m}),(\mathbf
{z}_n,\mathbf{w}_m)) $ is attained for some
$(s,t)\in\Sig$ and we set $r_1:=\|\mathbf{Z}_n-s(\mathbf{z}_n)\|_0$ and
$r_2:=\|\mathbf{W}_{m}-t(\mathbf{w}_m)\|_0$.

Furthermore, under additional Assumption~\ref{hyp:Lipschitz}, the
constant $c=\mu_{\min}^2\kappa_{\min}/16$ is such that
on the set $\Omega_0$ defined by \eqref{eq:Omega0} whose
$\mathbb{P}_{\star}$-probability satisfies
\[
\lleft\{ %
\begin{array} {l@{\qquad}l} \mathbb{P}_{\star}(
\Omega_0) \ge1 - 2QL \times \exp\bigl[-(n \wedge m)
\mu_{\min}^2/2\bigr] & \mbox{(LBM)},
\\\noalign{\vspace*{2pt}}
\mathbb{P}_{\star}(\Omega_0) \ge1 - 2Q \times \exp\bigl[-n
\alpha_{\min}^2/2\bigr]  & \mbox{(SBM)}, \end{array} %
\rright.
\]
for any parameter value $\bpi\in\Pi_{\Q\calL}$ and any sequence
$(\mathbf{z}_n,\mathbf{w}_m) \in\tU$, we have
%
\begin{equation}
\label{eq:CondExp_Order_Inf} \mathbb{E}_{\star}^{\mathbf{Z}_n\mathbf{W}_{m}} \bigl(
\delta^{\bpi
}(\mathbf{Z}_n,\mathbf{W}_{m},
\mathbf{z}_n,\mathbf{w}_m) \bigr) \ge2\bigl(c -
L_0\bigl\|\bpi-\boldsymbol{\pi}^{\star}\bigr\|_{\infty}\bigr)
(mr_1 + nr_2) . 
\end{equation}
\end{prop}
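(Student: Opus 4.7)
I start from the identity
\[
\esps^{\Zn\Wm}\bigl[\delta^{\bpi}(\Zn,\Wm,\bz,\bw)\bigr]
\;=\; \sum_{(i,j)\in\I} T_{ij},
\qquad T_{ij} \eqdef D(\pi^\star_{Z_iW_j}\|\pi_{z_iw_j})-D(\pi^\star_{Z_iW_j}\|\pi_{Z_iW_j}),
\]
obtained from the telescoping $\log(a/b)=\log(c/b)-\log(c/a)$ with $c=\pi^\star_{Z_iW_j}$ and the definition~\eqref{eq:Kullback} of $D$. Let $(s,t)\in\Sig$ attain the minimum in~\eqref{eq:distance} and set $S_1=\{i:Z_i\ne s(z_i)\}$, $S_2=\{j:W_j\ne t(w_j)\}$, so $|S_1|=r_1$ and $|S_2|=r_2$. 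The $\Sig$-invariance of $\bpi$ and $\bpi^\star$ forces $\pi_{z_iw_j}=\pi_{s(z_i)t(w_j)}$ and $\pi^\star_{z_iw_j}=\pi^\star_{s(z_i)t(w_j)}$, whence $T_{ij}\equiv 0$ whenever $i\notin S_1$ and $j\notin S_2$. The sum therefore reduces to the active set $H=\{(i,j)\in\I:i\in S_1\text{ or }j\in S_2\}$, of cardinality $mr_1+nr_2-r_1r_2\le mr_1+nr_2$.

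\textbf{Upper bound.} Drop the non-positive second $D$ and bound each remaining term by $\kappa_{\max}$ via Assumption~\ref{hyp:Bound_KL}:
\[
\esps^{\Zn\Wm}(\delta^{\bpi})\;\le\; \sum_{(i,j)\in H}D(\pi^\star_{Z_iW_j}\|\pi_{z_iw_j})\;\le\; \kappa_{\max}(mr_1+nr_2).
\]
This gives~\eqref{eq:CondExp_Order_Sup} with $C=2\kappa_{\max}$; the bound holds $\prs$-almost surely and needs no event like $\Omega_0$.

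\textbf{Lower bound via Lipschitz reduction.} Two applications of Assumption~\ref{hyp:Lipschitz}---one to replace $\pi_{z_iw_j}$ by $\pi^\star_{z_iw_j}$ inside the first $D$, and one to bound $D(\pi^\star_{Z_iW_j}\|\pi_{Z_iW_j})\le L_0\|\bpi-\bpis\|_\infty$---yield $T_{ij}\ge D(\pi^\star_{Z_iW_j}\|\pi^\star_{z_iw_j})-2L_0\|\bpi-\bpis\|_\infty$. Summing over $H$ (vanishing pairs contribute $0$) and using $|H|\le mr_1+nr_2$ reduces~\eqref{eq:CondExp_Order_Inf} to the combinatorial claim
\[
\sum_{(i,j)\in H}D(\pi^\star_{Z_iW_j}\|\pi^\star_{z_iw_j})\;\ge\; \tfrac{\mu_{\min}^2\kappa_{\min}}{8}\,(mr_1+nr_2)\quad\text{on }\Omega_0.
\]

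\textbf{Combinatorial core and main obstacle.} Split $H=H_1\sqcup H_2\sqcup H_3$ according to $(i\in S_1,j\notin S_2)$, $(i\notin S_1,j\in S_2)$, $(i\in S_1,j\in S_2)$. On $H_1$ the $\Sig$-invariance gives $\pi^\star_{z_iw_j}=\pi^\star_{s(z_i)W_j}$, so the $D$-term only sees the row mismatch $Z_i\ne s(z_i)$; Assumption~\ref{hyp:identifiability}(ii) supplies, for each pair $q\ne q'$, a column label $l_{q,q'}$ with $D(\pi^\star_{ql_{q,q'}}\|\pi^\star_{q'l_{q,q'}})\ge\kappa_{\min}$, and on $\Omega_0$ at least $(m\mu_{\min}/2-r_2)_+$ columns $j\notin S_2$ bear that label. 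Arguing symmetrically on $H_2$ and dropping the non-negative $H_3$ terms yields
\[
\sum_{(i,j)\in H}D\;\ge\;\bigl(r_1(m\mu_{\min}/2-r_2)_+ + r_2(n\mu_{\min}/2-r_1)_+\bigr)\kappa_{\min},
\]
after which a short case split---according as $r_1$, $r_2$ are below/above $n\mu_{\min}/4$, $m\mu_{\min}/4$, using $r_1r_2\le(mr_1+nr_2)/2$ when both exceed the threshold---closes the estimate. The delicate point is precisely this combinatorial step: Assumption~\ref{hyp:identifiability}(ii) guarantees only \emph{one} differentiating column per pair of row-groups, so the $H_1$ contribution alone can collapse when $r_2$ approaches $m\mu_{\min}/2$ and must be rescued by the symmetric $H_2$ bound. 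The factor $\mu_{\min}^2$ in $c=\mu_{\min}^2\kappa_{\min}/16$ is exactly the product of the two $\Omega_0$ lower bounds $N_q(\Zn)\ge n\mu_{\min}/2$ and $N_l(\Wm)\ge m\mu_{\min}/2$ used simultaneously on rows and columns.
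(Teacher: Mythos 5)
Your reduction of the proposition to a combinatorial claim is sound and in fact follows the paper's own route: the identity $T_{ij}=D(\pi^\star_{Z_iW_j}\|\pi_{z_iw_j})-D(\pi^\star_{Z_iW_j}\|\pi_{Z_iW_j})$, the restriction to the active set $H$ of cardinality $mr_1+nr_2-r_1r_2$, the upper bound by $\kappa_{\max}\,|H|$, and the two applications of Assumption~\ref{hyp:Lipschitz} reproduce the decomposition~\eqref{eq:decomp_exKullback}. The gap is in the combinatorial core, which the paper isolates as Lemma~\ref{lem:Bound_Number}. Your substitute bound
\[
\sum_{(i,j)\in H}D(\pi^\star_{Z_iW_j}\|\pi^\star_{z_iw_j})\;\ge\;\bigl(r_1(m\mu_{\min}/2-r_2)_+ + r_2(n\mu_{\min}/2-r_1)_+\bigr)\kappa_{\min}
\]
is correct but too weak: its right-hand side is identically zero as soon as $r_1\ge n\mu_{\min}/2$ and $r_2\ge m\mu_{\min}/2$ hold simultaneously (take $n=m$, $Q=L=2$, uniform proportions, $r_1=r_2=n/4$), whereas the target $\tfrac{\mu_{\min}^2}{8}\kappa_{\min}(mr_1+nr_2)$ is strictly positive there. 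No case split on $r_1,r_2$ can recover a positive bound from a quantity that vanishes, so the estimate does not close. The $H_3$ terms you discard are exactly the ones that must carry the bound in that regime, and Assumption~\ref{hyp:identifiability}(ii) alone gives no control over them, since $\pi^\star_{Z_iW_j}=\pi^\star_{z_iw_j}$ is perfectly possible when both the row and the column labels are mismatched.

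A useful diagnostic is that your combinatorial step never invokes the second half of Assumption~\ref{hyp:symetrie} (no $(s,t)\in\SigQL\setminus\Sig$ fixes $\bpi$), yet the claim is false without it: if some $(s,t)\notin\Sig$ satisfied $(\bpis)^{s,t}=\bpis$, the configuration $(s^{-1}(\bzs),t^{-1}(\bws))$ would have $r_1,r_2$ of order $n,m$ while $\text{diff}=0$. Any correct proof must therefore use that hypothesis precisely in the large-$(r_1,r_2)$ regime. The paper's Lemma~\ref{lem:Bound_Number} does so by reordering $(\bzs,\bws)$ into blocks containing every label, encoding $(\bz,\bw)$ through maps $s_k,t_j$, choosing among the admissible orderings one minimizing the number of injective maps, and showing that an unchanged block forces every injective $s_k$ (resp.\ $t_j$) to equal one common permutation pair which, by Assumption~\ref{hyp:symetrie}, must lie in $\Sig$; minimality of the chosen representative then yields $r_1\le d_1Q$, $r_2\le d_2L$ for the numbers $d_1,d_2$ of non-injective maps, and each non-injective block-row or block-column contributes at least one difference. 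You would need to import that argument (or an equivalent one) to complete the lower bound; the rest of your proof stands.
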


\begin{pf}
Note that
\begin{eqnarray*}
\mathbb{E}_{\star}^{\mathbf{Z}_n\mathbf{W}_{m}} \bigl(\delta^{\bpi
}(
\mathbf{Z}_n,\mathbf{W}_{m},\mathbf{z}_n,
\mathbf{w}_m) \bigr) = \sum_{(\mathbf{z}_n^{\star},\mathbf{w}_m^{\star}) \in\tU
}
\mathbb{E}_{\star}^{\mathbf{z}_n^{\star}\mathbf{w}_m^{\star}} \bigl(\delta ^{\bpi}\bigl(
\mathbf{z}_n^{\star},\mathbf{w}_m^{\star},
\mathbf {z}_n,\mathbf{w}_m\bigr) \bigr)\times
1_{ (\mathbf{Z}_n,\mathbf{W}_{m})= (\mathbf{z}_n^{\star},\mathbf
{w}_m^{\star})} ,
\end{eqnarray*}
so that we can work on the set $\{ (\mathbf{Z}_n,\mathbf{W}_{m})=
(\mathbf{z}_n^{\star},\mathbf{w}_m^{\star})\} $ for a
fixed configuration $(\mathbf{z}_n^{\star},\mathbf{w}_m^{\star})\in
\tU$. Moreover, we can choose
$(\mathbf{z}_n,\mathbf{w}_m)\in\tU$ that realizes the distance
$d((\mathbf{z}_n^{\star}, \mathbf{w}_m^{\star}),(\mathbf{z}_n
, \mathbf{w}_m))$, namely such that $d((\mathbf{z}_n^{\star},
\mathbf{w}_m^{\star}),(\mathbf{z}_n, \mathbf{w}_m)) = \|\mathbf
{z}_n^{\star}
-\mathbf{z}_n\|_0+ \|\mathbf{w}_m^{\star}-\mathbf{w}_m\|_0=r_1+r_2$.

If $(\mathbf{z}_n,\mathbf{w}_m) = (\mathbf{z}_n^{\star},\mathbf
{w}_m^{\star})$, namely $r_1=r_2=0$, then we have
$\delta^{\bpi}(\mathbf{z}_n^{\star},\mathbf{w}_m^{\star},\mathbf
{z}_n,\mathbf{w}_m)=0$
and the lemma is proved. Otherwise, we may have
$r_1$ or $r_2$ equal to zero but $r_1+r_2\ge1$. Without loss of
generality, we can assume that $\mathbf{z}_n^{\star}$ and $\mathbf
{z}_n$ (respectively $\mathbf{w}_m^{\star}$
and $\mathbf{w}_m$) differ at the first $r_1$ (resp. $r_2$)
indexes.

First, let us note that
%
\begin{equation}
\label{eq:esp_cond} \mathbb{E}_{\star}^{\mathbf{z}_n^{\star}\mathbf{w}_m^{\star}} \bigl(
\delta^{\bpi}\bigl(\mathbf{z}_n^{\star},
\mathbf{w}_m^{\star
},\mathbf{z}_n,
\mathbf{w}_m\bigr) \bigr) = \sum_{(i,j)\in\tilde\I}
\int_{\X} \log \biggl(\frac{f(x;\pi
_{z_i^\star
w_j^\star} )}{f(x;\pi_{z_i w_j} )} \biggr) f\bigl(x;
\pi^\star_{z_i^\star
w_j^\star}\bigr) \,\mathrm{d}x ,
\end{equation}
where $\tilde\I= \I\smallsetminus\{(i,j) ; i>r_1 \mbox{ and } j
>r_2\}$. This leads to
\begin{eqnarray*}
\mathbb{E}_{\star}^{\mathbf{z}_n^{\star}\mathbf{w}_m^{\star}} \bigl(\delta^{\bpi}\bigl(
\mathbf{z}_n^{\star},\mathbf{w}_m^{\star
},
\mathbf{z}_n,\mathbf{w}_m\bigr) \bigr) \le(m
r_1+nr_2-r_1r_2)
\kappa_{\max} \le\frac{C} 2 (mr_1 + nr_2)
,
\end{eqnarray*}
with $C = 2\kappa_{\max}$,
which establishes Inequality \eqref{eq:CondExp_Order_Sup}.

To prove Inequality \eqref{eq:CondExp_Order_Inf}, we write the
decomposition
%
\begin{eqnarray}
\label{eq:decomp_exKullback} &&\sum_{(i,j)\in\tilde\I} \int_{\X}
\log \biggl(\frac{f(x;\pi
_{z_i^\star
w_j^\star} )}{f(x;\pi_{z_i w_j} )} \biggr) f\bigl(x;\pi^\star_{z_i^\star
w_j^\star}
\bigr) \,\mathrm{d}x\nonumber \\
&&\quad = \sum_{(i,j)\in\tilde\I} \biggl\{ - D\bigl(
\pi^\star_{z_i^\star w_j^\star} \parallel \pi_{z_i^\star w_j^\star}\bigr)+ D\bigl(\pi^\star_{z_i^\star w_j^\star} \parallel\pi^\star_{z_iw_j}
\bigr)
\\
&&\hphantom{q=\sum_{(i,j)\in\tilde\I} \biggl\{}{} + \int_{\X} \log\frac{f(x;\pi^\star_{z_iw_j})}{f(x;\pi
_{z_iw_j})} f\bigl(x;
\pi^\star_{z_i^\star
w_j^\star}\bigr) \,\mathrm{d}x \biggr\}.\nonumber
\end{eqnarray}
According to Assumption~\ref{hyp:Lipschitz}, the third term in the
right-hand side of the above equation is lower-bounded by
$- L_0\|\bpi-\boldsymbol{\pi}^{\star}\|_{\infty
}(mr_1+nr_2-r_1r_2)$. The first term in
this right-hand side
is handled similarly as we have
\begin{eqnarray*}
0&<& \sum_{(i,j)\in\tilde\I} D\bigl(\pi^\star_{z_i^\star w_j^\star}
\parallel \pi_{z_i^\star w_j^\star}\bigr) = \sum_{(i,j)\in\tilde\I} \int
_{\X
} \log \frac{f(x;\pi^\star_{z_i^\star w_j^\star})}{f(x;\pi_{z_i^\star
w_j^\star})} f\bigl(x; \pi^\star_{z_i^\star w_j^\star}
\bigr) \,\mathrm{d}x
\\
&\le& L_0\bigl\|\bpi-\boldsymbol{\pi}^{\star}\bigr\|_{\infty}(mr_1+nr_2-r_1r_2),
\end{eqnarray*}
where the second inequality is another application of
Assumption~\ref{hyp:Lipschitz}.

The central term appearing in the right-hand side of
decomposition \eqref{eq:decomp_exKullback} is handled relying on the
next lemma, whose proof is postponed to the \hyperref[sec:appendix]{Appendix}.
It is a generalization to LBM of Proposition B.5 in
\cite{Alain_JJ_LP} that considers SBM only. This lemma bounds from
below the number of pairs $(i,j)$ such that
\[
\pi^\star_{z_i^\star w_j^\star} \neq\pi^\star_{z_i w_j}
\]
and establishes that it is of order $m r_1+ nr_2$. This is
possible only for the configurations $(\mathbf{z}_n^{\star},\mathbf
{w}_m^{\star})\in\tU^0$ defined
by \eqref{eq:good_set}. For the rest of the proof, we work on the set
$\Omega_0$, meaning that
we assume $\{(\mathbf{Z}_n,\mathbf{W}_{m})=(\mathbf{z}_n^{\star
},\mathbf{w}_m^{\star}) \in\tU^0\}$.

\begin{lemma}[(Bound on the number of differences)]
\label{lem:Bound_Number}
Under Assumptions \ref{hyp:identifiability} and \ref{hyp:symetrie},
for any
configurations $(\mathbf{z}_n,\mathbf{w}_m) \in\tU$ and $(\mathbf
{z}_n^{\star},\mathbf{w}_m^{\star}) \in\tU^0$,
we have
%
\begin{equation}
\label{eq:Bound_Number} \diff \bigl(\mathbf{z}_n,\mathbf{w}_m,
\mathbf{z}_n^{\star
},\mathbf{w}_m^{\star}
\bigr) := \bigl| \bigl\{ (i,j) \in\I; \pi^\star_{z_i w_j} \neq
\pi^\star_{z_i^\star w_j^\star} \bigr\}\bigr |\ge\frac{\mu_{\min
}^2}{8}
(mr_1 + nr_2),
\end{equation}
where the distance $d(
(\mathbf{z}_n,\mathbf{w}_m),(\mathbf{z}_n^{\star},\mathbf
{w}_m^{\star})) $ is attained for some permutations
$(s,t)\in\Sig$ and we set $r_1:=\|\mathbf{z}_n-s(\mathbf
{z}_n^{\star})\|_0$ and $r_2:=\|\mathbf{w}_m
-t(\mathbf{w}_m^{\star})\|_0$.
\end{lemma}

According to Assumption~\ref{hyp:Bound_KL}, if $\pi^\star_{z_i w_j}
\neq
\pi^\star_{z^\star_i w^\star_j}$, the divergence $D(\pi^\star
_{z_i^\star w_j^\star} \parallel \pi^\star_{z_i w_j})$ is at least $\kappa
_{\min}$. We thus get
\begin{eqnarray*}
\sum_{(i,j)\in\tilde\I} D\bigl(\pi^\star_{z_i^\star w_j^\star}
\parallel\pi^\star_{z_iw_j}\bigr) \ge\frac{\mu_{\min}^2\kappa_{\min}}{8}
(mr_1 + nr_2).
\end{eqnarray*}
Coming back to \eqref{eq:decomp_exKullback} and \eqref{eq:esp_cond},
we obtain
\begin{eqnarray*}
&&\sum_{(i,j)\in\tilde\I} \int_{\X} \log
\biggl(\frac{f(x;\pi
_{z_i^\star
w_j^\star} )}{f(x;\pi_{z_i w_j} )} \biggr) f\bigl(x;\pi^\star_{z_i^\star
w_j^\star}\bigr)
\,\mathrm{d}x \\
&&\quad \ge \biggl(\frac{\mu_{\min}^2\kappa_{\min}}{8} - 2 L_0\bigl\|\bpi-\boldsymbol{
\pi}^{\star}\bigr\|_{\infty} \biggr) (mr_1 +
nr_2)
\end{eqnarray*}
and thus conclude
\begin{eqnarray*}
\mathbb{E}_{\star}^{\mathbf{z}_n^{\star}\mathbf{w}_m^{\star}} \bigl(\delta^{\bpi}\bigl(
\mathbf{z}_n^{\star},\mathbf{w}_m^{\star
},
\mathbf{z}_n,\mathbf{w}_m\bigr) \bigr) \ge \biggl(
\frac{\mu_{\min}^2\kappa_{\min}}{8} - 2 L_0\bigl\|\bpi-\boldsymbol{\pi}^{\star}
\bigr\|_{\infty} \biggr) (mr_1 + nr_2) .
\end{eqnarray*}
By letting $c$ = $\mu_{\min}^2\kappa_{\min}/16$, we obtain exactly
\eqref{eq:CondExp_Order_Inf}.
\end{pf}

In the following, we will consider asymptotic results where both $n$ and
$m$ increase to infinity. The next assumption settles the
relative rates of convergence of $n$ and $m$ in LBM. With no loss of
generality, we assume in the following that $n\ge m$, view $m=m_n$ as
a sequence depending on $n$ and state the convergence results with
respect to $n\to+\infty$. Note that the assumption is trivial for SBM.

\begin{assumption}[(Asymptotic setup)] \label{hyp:asymptotics}
The sequence $(m_n)_{n\ge1} $ converges to infinity under the
constraints $m_n\le n$ and $(\log n)/m_n \to0$.
\end{assumption}

We now state the main theorem.
%
\begin{thm} \label{thm:cv_posterior} Under
Assumptions \ref{hyp:identifiability} to \ref{hyp:asymptotics},
following the notation of Proposition~\ref{prop:CondExp_Order}, for
any $\eta\in(0,c/(2L_0))$, there exists a family $\{\eps_{n,m}
\}_{n,m}$ of positive real numbers with $\sum_{n} \eps_{n,m_n}
<+\infty$, such that on a set $\Omega_1$ whose $\mathbb{P}_{\star
}$-probability is
at least $1 - \eps_{n,m}$ and for any $\theta=(\bmu,\bpi)\in\Theta$
satisfying $\|\bpi-\bpi^\star\|_\infty\le\eta$, we have for any
$(\mathbf{z}_n,\mathbf{w}_m)\in\U$ and any $(s,t) \in\Sig$,
%
\begin{eqnarray}
\label{eq:cv_post_ratio_1} && \log\frac{p_{n,m}^{\theta}(s(\mathbf{Z}_n),t(\mathbf
{W}_{m}))}{p_{n,m}^{\theta}(\mathbf{z}_n,\mathbf{w}_m)}\nonumber
\\[-8pt]\\[-8pt]
&&\quad  \ge\lleft\{ %
\begin{array} {l} \bigl(c-2L_0\bigl\|\bpi-
\boldsymbol{\pi}^{\star}\bigr\|_\infty\bigr) (mr_1 +
nr_2) - K\bigl(\bigl\| s(\mathbf{Z}_n)-\mathbf{z}_n
\bigr\|_0+\bigl\|t(\mathbf{W}_{m})-\mathbf {w}_m
\bigr\|_0\bigr) \\
\qquad  \mbox{(LBM)},
\\\noalign{\vspace*{+4pt}}
\bigl(c-2L_0\bigl\|\bpi-\boldsymbol{\pi}^{\star}\bigr\|_\infty
\bigr) 2nr_1 - K\bigl\| s(\mathbf{Z}_n)-\mathbf{z}_n
\bigr\|_0 \\
\qquad  \mbox{(SBM)}, \end{array} %
\nonumber
\rright.
\end{eqnarray}
 and
\begin{eqnarray}\label{eq:cv_post_ratio_2}
&& \log\frac{p_{n,m}^{\theta}(s(\mathbf
{Z}_n),t(\mathbf{W}_{m}))}{p_{n,m}^{\theta}(\mathbf{z}_n,\mathbf{w}_m)}
\nonumber\\[-8pt]\\[-8pt]
&&\quad \le\lleft\{ %
\begin{array} {l@{\qquad}l} C (mr_1 +
nr_2) + K \bigl(\bigl\| s(\mathbf{Z}_n)-\mathbf{z}_n
\bigr\|_0+\bigl\|t(\mathbf {W}_{m})-\mathbf{w}_m
\bigr\|_0\bigr) & \mbox{(LBM)},
\\\noalign{\vspace*{2pt}}
C 2nr_1 + K \bigl\| s(\mathbf{Z}_n)-\mathbf{z}_n
\bigr\|_0 & \mbox{(SBM)}, \end{array} %
\nonumber
\rright.
\end{eqnarray}
where the distance $d((s(\mathbf{Z}_n),t(\mathbf{W}_{m})) , (\mathbf
{z}_n,\mathbf{w}_m))$, which does not
depend on $(s,t)$, is attained for some permutation $(\tilde{s},
\tilde{t})\in\Sig$ and we set $r_1:=\|\mathbf{Z}_n-\tilde
{s}(\mathbf{z}_n)\|_0$ and
$r_2:=\|\mathbf{W}_{m}-\tilde{t}(\mathbf{w}_m)\|_0$ and
$K=\log(\alpha_{\max}/\alpha_{\min}) \vee
\log(\beta_{\max}/\beta_{\min})$.
\end{thm}

Let us comment this result.
Inequalities \eqref{eq:cv_post_ratio_1} and \eqref{eq:cv_post_ratio_2}
provide a control of the concentration of the posterior distribution
on the actual (random) configuration $(\mathbf{Z}_n,\mathbf{W}_{m})$,
\emph{viewed as an
equivalence class in $\tU$}. The most important one
is \eqref{eq:cv_post_ratio_1} that provides a lower bound on the
posterior probability of any configuration equivalent to the actual
configuration $(\mathbf{Z}_n,\mathbf{W}_{m})$ compared to any other
configuration
$(\mathbf{z}_n,\mathbf{w}_m)$. In this inequality, two different
distances appear
between these configurations, namely the $\ell_0$ distance and the
distance $d(\cdot,\cdot)$ given by \eqref{eq:distance}, on the set of actual
configurations (so that $d(\cdot,\cdot)$ is linked with the parameter $\bpi$ and
its symmetries). When the subgroup $\Sig$ is reduced to identity (no
symmetries allowed in $\bpi$), these two distances coincide and the
statement substantially simplifies. Another case where it simplifies
is when $K=0$, corresponding to $\alpha_{\max}=\alpha_{\min}$ and
$\beta_{\max}=\beta_{\min}$ or equivalently to uniform group
proportions. These two particular cases are further expanded below in
the first two corollaries. In general, the two different distances
appear and play a different role in this inequality. In particular,
consider Inequality \eqref{eq:cv_post_ratio_1} with for instance
$s=\mathit{Id}=t$. It may be the case that a putative configuration
$(\mathbf{z}_n,\mathbf{w}_m)$ is equivalent to the actual random one
$(\mathbf{Z}_n,\mathbf{W}_{m})$ in the
sense of relation $\sim$, and thus their distance $d(\cdot,\cdot)$ is zero
($r_1=r_2=0$ above), but their $\ell_0$ distance is large. Then, the
posterior distribution $p_{n,m}^{\theta}$ will not concentrate on
$(\mathbf{z}_n,\mathbf{w}_m)$ due
to the existence of different group proportions $\bmu$ that help
distinguish between $(\mathbf{Z}_n,\mathbf{W}_{m})$ and this
equivalent configuration
$(\mathbf{z}_n,\mathbf{w}_m)$. The extent to which the group
proportions $\bmu$ are
different is measured by $K= \log(\alpha_{\max}/\alpha_{\min})
\vee
\log(\beta_{\max}/\beta_{\min})$. When this quantity is small
compared to the term $c-2L_0\eta$ (depending on $\bpi$, the
connectivity part of the parameter) appearing in
\eqref{eq:cv_post_ratio_1}, the term $K(\| \mathbf{Z}_n-\mathbf
{z}_n\|_0+\|\mathbf{W}_{m}-\mathbf{w}_m\|_0)
$ is negligible and the posterior distribution $p_{n,m}^{\theta}$ will not
distinguish between the actual configuration and any equivalent one.

Before giving the proof of the theorem, we provide some corollaries
that will help understand the importance of the previous result. The
first two corollaries deal with special setups and the third one is an
attempt to give a general understanding of the behaviour of the groups
posterior distribution. All these results state that, under some
appropriate assumptions, the posterior distribution $p_{n,m}^{\theta}$
concentrates on the actual random configuration $(\mathbf{Z}_n,\mathbf
{W}_{m})$, with
large probability. We stress the fact that the results are valid for
any parameter value $\theta$ (satisfying some additional assumption)
and not only the true one $\theta^\star$. More precisely, the results
are valid at any $\theta=(\bmu,\bpi)$ such that $\bpi$ is close enough
to the true value $\boldsymbol{\pi}^{\star}$.

\begin{cor}[(Case $\Sig=\{(\mathit{Id},\mathit{Id})\}$)] \label{cor:1} Under
Assumptions \ref{hyp:identifiability} to \ref{hyp:asymptotics} and
when $\Sig=\{(\mathit{Id},\mathit{Id})\}$, we obtain that on the set $\Omega_1$ whose
$\mathbb{P}_{\star}$-probability is at least $1 - \eps_{n,m}$, for
any parameter
$\theta=(\bmu,\bpi)\in\Theta$ satisfying
$\|\bpi-\bpi^\star\|_\infty\le\eta$ for small enough $\eta$, we
have
\begin{eqnarray*}
p_{n,m}^{\theta}(\mathbf{Z}_n,
\mathbf{W}_{m}) \ge1 - a_{n,m} \exp(a_{n,m}) \quad \mbox{and}\quad  p_{n,m}^{\theta}(\mathbf{Z}_n,
\mathbf{W}_{m}) \le\bigl(1 + b_{n,m}\mathrm{e}^{b_{n,m}}
\bigr)^{-1} ,
\end{eqnarray*}
where
%
\begin{equation}
\label{eq:anm} \lleft\{ %
\begin{array} {l@{\qquad}l} a_{n,m} =
\bigl(n\mathrm{e}^{-(c-2L_0\eta)m+K } + m\mathrm{e}^{-(c-2L_0\eta)n+K }\bigr) ;\\\noalign{\vspace*{2pt}}
 b_{n,m} = \bigl(n
\mathrm{e}^{-Cm-K} + m \mathrm{e}^{-Cn-K}\bigr) & \mbox{(LBM)},
\\\noalign{\vspace*{4pt}}
a_{n,n}= n\mathrm{e}^{-2n(c-2L_0\eta) +K} ; \qquad  b_{n,n} = n \mathrm{e}^{-2Cn-K}
& \mbox{(SBM)}, \end{array} %
\rright.
\end{equation}
all converge to $0$ as $n\to+\infty$. As a consequence, relying on
the maximum a posteriori (MAP) procedure, at a parameter value $\hat
{\theta}=(\hat{\bmu},\hat{\bpi})$ such that $\hat{\bpi}$ converges to the
true parameter value $\boldsymbol{\pi}^{\star}$, namely
\begin{eqnarray*}
(\widehat{\mathbf{Z}}_n,\widehat{\mathbf{W}}_{m}):=
\displaystyle \mathop{\argmax}_{(\mathbf
{z}_n,\mathbf{w}_m) \in\U} p_{n,m}^{\hat{\theta}} (
\mathbf{z}_n,\mathbf{w}_m) , \qquad \mbox{where } \hat{\theta}=(
\hat{\bmu} ,\hat{\bpi}) \mbox{ and } \hat{\bpi}\to\boldsymbol{\pi}^{\star}
\end{eqnarray*}
the number of misclassified rows and/or columns on the set
$\Omega_1$
\begin{eqnarray*}
\sum_{i=1}^n 1\{\hat Z_i
\neq Z_i\} + \sum_{j=1}^m 1\{
\hat W_j\neq W_j\} \qquad \mbox{(LBM)}\quad  \mbox{or}\quad  \sum
_{i=1}^n 1\{\hat Z_i\neq
Z_i\} \qquad \mbox{(SBM)},
\end{eqnarray*}
is exactly 0 for large enough $n$.
\end{cor}

\begin{cor}[(Case of uniform group proportions)] \label{cor:2} Under
Assumptions \ref{hyp:identifiability} to \ref{hyp:asymptotics} and
when $K=0$, we obtain that on the set $\Omega_1$, for any parameter
$\theta=(\bmu,\bpi)\in\Theta$ satisfying
$\|\bpi-\bpi^\star\|_\infty\le\eta$ for small enough $\eta$, we have
\begin{eqnarray*}
p_{n,m}^{\theta} \bigl( \bigl\{ (
\mathbf{z}_n, \mathbf{w}_m)\in\U; (\mathbf{z}_n,
\mathbf{w}_m) \sim(\mathbf{Z}_n,\mathbf{W}_{m})
\bigr\} \bigr) \ge1 - |\Sig| a_{n,m}\mathrm{e}^{a_{n,m}}
\end{eqnarray*}
 and
 \begin{eqnarray*} p_{n,m}^{\theta} \bigl( \bigl\{ (
\mathbf{z}_n, \mathbf{w}_m)\in\U; (\mathbf{z}_n,
\mathbf{w}_m) \sim(\mathbf{Z}_n,\mathbf{W}_{m})
\bigr\} \bigr) \le\bigl(1 + |\Sig|b_{n,m}\mathrm{e}^{b_{n,m}}
\bigr)^{-1},
\nonumber
\end{eqnarray*}
where $a_{n,m}$ and $b_{n,m}$ are defined through \eqref{eq:anm}
with $K=0$
and converge to $0$ as $n\to+\infty$.
Moreover,
\begin{eqnarray*}
p_{n,m}^{\theta} ( \mathbf{Z}_n,
\mathbf{W}_{m} ) = \frac{1}{|\Sig|} p_{n,m}^{\theta}
\bigl( \bigl\{ (\mathbf{z}_n, \mathbf {w}_m)\in\U; (
\mathbf{z}_n, \mathbf{w}_m) \sim (\mathbf{Z}_n,
\mathbf{W}_{m}) \bigr\} \bigr) .
\end{eqnarray*}
\end{cor}

\begin{cor}[(General case)] \label{cor:3} Under
Assumptions \ref{hyp:identifiability} to \ref{hyp:asymptotics}, we
obtain that on the set $\Omega_1$, for any parameter
$\theta=(\bmu,\bpi)\in\Theta$ satisfying
$\|\bpi-\bpi^\star\|_\infty\le\eta$ for small enough $\eta$, we
have
\begin{eqnarray*}
p_{n,m}^{\theta} \bigl( \bigl\{ (
\mathbf{z}_n, \mathbf{w}_m)\in\U; (\mathbf{z}_n,
\mathbf{w}_m) \sim(\mathbf{Z}_n,\mathbf{W}_{m})
\bigr\} \bigr) \ge 1 - |\Sig| a_{n,m}\mathrm{e}^{a_{n,m}}
\end{eqnarray*}
 and
 \begin{eqnarray*} p_{n,m}^{\theta} \bigl( \bigl\{ (
\mathbf{z}_n, \mathbf{w}_m)\in\U; (\mathbf{z}_n,
\mathbf{w}_m) \sim(\mathbf{Z}_n,\mathbf{W}_{m})
\bigr\} \bigr) \le \bigl(1 + |\Sig| b_{n,m}\mathrm{e}^{b_{n,m}}
\bigr)^{-1} ,
\nonumber
\end{eqnarray*}
where $a_{n,m}$ and $b_{n,m}$ are defined through \eqref{eq:anm}
and converge to $0$ as $n\to+\infty$.
\end{cor}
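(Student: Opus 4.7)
The strategy is to compare the posterior mass on the equivalence class
\[
E \eqdef \{(\bz,\bw) \in \U;\, (\bz,\bw) \sim (\Zn,\Wm)\} = \{(s(\Zn),t(\Wm));\,(s,t)\in\Sig\}
\]
with the mass on its complement, via Theorem~\ref{thm:cv_posterior}. Observe that $\pnm(E) = \sum_{(s,t)\in\Sig} \pnm(s(\Zn),t(\Wm))$ (possibly with coinciding terms). Since $\pnm(E) + \pnm(\U\setminus E) = 1$, the two inequalities of the corollary follow respectively from
\[
\pnm(\U\setminus E) \le |\Sig|\, a_{n,m} e^{a_{n,m}} \pnm(E) \quad \text{and} \quad \pnm(\U\setminus E) \ge |\Sig|\, b_{n,m} e^{b_{n,m}} \pnm(E),
\]
together with the elementary identity $\pnm(E) = 1/(1 + \pnm(\U\setminus E)/\pnm(E))$.

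For the upper bound on $\pnm(\U\setminus E)$, I would fix $(\bz,\bw) \notin E$ and let $(\tilde s,\tilde t) \in \Sig$ attain the distance $d((\bz,\bw),(\Zn,\Wm))$, so that $r_1 = \|\Zn - \tilde s(\bz)\|_0$ and $r_2 = \|\Wm - \tilde t(\bw)\|_0$. Writing $\tilde c = c - 2L_0\eta$ and applying~\eqref{eq:cv_post_ratio_1} with $(s,t) = (\tilde s,\tilde t)$ yields
\[
\pnm(\bz,\bw) \le \pnm(\tilde s(\Zn),\tilde t(\Wm))\, e^{-\tilde c(mr_1+nr_2) + K(r_1+r_2)} \le \pnm(E)\, e^{-\tilde c(mr_1+nr_2) + K(r_1+r_2)}.
\]
Summing over $\U \setminus E$ decomposed by the nearest representative of $E$ and the pair $(r_1,r_2)$: for a fixed representative, the number of configurations at $\ell_0$-distance $(r_1,r_2)$ is at most $\binom{n}{r_1}(Q-1)^{r_1}\binom{m}{r_2}(L-1)^{r_2}$, and each $(\bz,\bw)$ appears in the nearest-neighbor region of at most $|\Sig|$ representatives. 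Using $\binom{n}{r} \le n^r/r!$, the resulting double series factorizes into a product of two exponential series, whose value is $\exp(a_{n,m}) - 1$ (after absorbing the constants $\log(Q-1)$ and $\log(L-1)$ into $K$), and the elementary inequality $e^x - 1 \le x e^x$ for $x \ge 0$ yields the desired bound.

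For the lower bound on $\pnm(\U\setminus E)$ the argument is symmetric. Applying~\eqref{eq:cv_post_ratio_2}, for each $(s,t) \in \Sig$ and each $(\bz,\bw)$ at distance $(r_1,r_2)$ from $(s(\Zn),t(\Wm))$,
\[
\pnm(\bz,\bw) \ge \pnm(s(\Zn),t(\Wm))\, e^{-C(mr_1+nr_2) - K(r_1+r_2)}.
\]
I would then, around each representative of $E$, pick a disjoint family of small perturbations lying in $\U\setminus E$, sum the lower bounds into a $b_{n,m} e^{b_{n,m}}$ contribution per representative, and conclude by summing over $\Sig$.

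The hardest part will be the combinatorial bookkeeping: managing the overcounting across equivalent representatives so as to produce exactly the prefactor $|\Sig|$ in the upper bound on $\pnm(\U\setminus E)$, constructing disjoint neighborhoods around each representative in the lower bound, and verifying that the combinatorial series align precisely with the form of $a_{n,m}$ and $b_{n,m}$ given in~\eqref{eq:anm}.
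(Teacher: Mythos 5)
Your proposal follows essentially the same route as the paper's proof: reduce both bounds to controlling $(1-\pnm(E))/\pnm(E)=\sum_{(\bz,\bw)\not\sim(\Zn,\Wm)}\pnm(\bz,\bw)/\pnm(E)$, apply~\eqref{eq:cv_post_ratio_1} (resp.~\eqref{eq:cv_post_ratio_2}) with the distance-attaining permutation together with $\pnm(\tilde s(\Zn),\tilde t(\Wm))\le\pnm(E)$, decompose the complement of the equivalence class by $(r_1,r_2)$ with an $|\Sig|$ overcounting factor as in~\eqref{eq:decomp_config}--\eqref{eq:cardinal}, and sum the resulting series (the paper uses the binomial identity and $(1+u)^n(1+v)^m-1\le(nu+mv)e^{nu+mv}$ where you use $\binom{n}{r}\le n^r/r!$ and $e^x-1\le xe^x$). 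The only deviations are cosmetic: your count carries the extra factors $(Q-1)^{r_1}(L-1)^{r_2}$, which you absorb into $K$, so your constant is not literally the $a_{n,m}$ of~\eqref{eq:anm}, and your treatment of the second (upper-bound) inequality is sketched at roughly the same level of detail as the paper's own ``following the same lines''.
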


\begin{remark} Theorem~\ref{thm:cv_posterior} and
Corollaries \ref{cor:1} to \ref{cor:3} are expressed in full
generality but their results apply both to LBM and SBM with the
notation adopted in Section~\ref{sec:model}. In particular, the
expressions given in these statements simplify for SBM as $n = m,
\mathbf{Z}_n=\mathbf{W}_{m}, s=t$ and $r_1=r_2$.
\end{remark}

\begin{remark}
Note that the convergence of the posterior distribution (to the set
of configurations equivalent to the actual random one) happens at a
rate determined by the constant
\begin{eqnarray*}
c-2L_0\eta>0.
\end{eqnarray*}
Typically, the rate of this convergence is fast when $\bpi$ is not
too different from $\boldsymbol{\pi}^{\star}$ (namely $\|\bpi
-\boldsymbol{\pi}^{\star}\|_{\infty}$ and
thus $L_0 \eta$ small) while the connectivity parameters are
sufficiently distinct (namely $\kappa_{\min}$ and thus $c$ large).

When $\Sig=\{(\mathit{Id},\mathit{Id})\}$, the actual configuration has no other
equivalent one and the posterior distribution converges to it. When
$K=0$, group proportions are equal and do not discriminate between
equivalent configurations. Therefore, all equivalent configurations
(if any) are equally likely. In all other cases, the support of the
posterior distribution converges to the set of configurations
equivalent to the actual one, including the actual one. However, the
latter may not be the most likely among those. Provided $n$ and $m$
are large enough, the most likely configuration is the configuration
$(\mathbf{z}_n, \mathbf{w}_m)$ equivalent to $(\mathbf{Z}_n, \mathbf
{W}_{m})$ which maximizes the quantity
\begin{eqnarray*}
\sum_{i = 1}^n \log{\alpha_{z_i}}
+ \sum_{j = 1}^m \log {
\beta_{w_j}} = \sum_{q = 1}^Q
N_q(\mathbf{z}_n) \log{\alpha_q} + \sum
_{l
= 1}^{L} N_l(
\mathbf{w}_m) \log{\beta_l}.
\end{eqnarray*}

Also note that we control the number of errors made by a maximum a
posteriori clustering procedure only in the case where
$\Sig=\{(\mathit{Id},\mathit{Id})\}$, namely when there are no symmetries in the set
of matrices $\Pi_{\Q\calL}$. In the other cases, this procedure is
likely to select a configuration equivalent to the true one, but not
equal to it. We stress again the fact that the equivalence relation
is different from the label switching issue that can not be avoided
in finite mixture models. Moreover, exactly as for the label
switching issue, this phenomenon will not affect clustering
performance.
\end{remark}

\begin{pf*}{Proof of Theorem~\ref{thm:cv_posterior}}
We shall
exhibit the set $\Omega_1$ on which
Inequalities \eqref{eq:cv_post_ratio_1}
and \eqref{eq:cv_post_ratio_2} are satisfied using LBM notation, the
case of SBM easily follows.

First, note that we
have
\begin{eqnarray*}
&&\log\frac{p_{n,m}^{\theta}(s(\mathbf{Z}_n),t(\mathbf
{W}_{m}))}{p_{n,m}^{\theta}(\mathbf{z}_n,\mathbf{w}_m)}\\
&&\quad = \delta^{\bpi}\bigl(s(\mathbf{Z}_n),t(
\mathbf{W}_{m}),\mathbf {z}_n,\mathbf{w}_m
\bigr) +\sum_{i=1}^n \log \biggl(
\frac{\alpha_{s(Z_i)}}{\alpha_{z_i}} \biggr) +\sum_{j=1}^m
\log \biggl(\frac{\beta_{t(W_j)}}{\beta_{w_j}} \biggr).
\end{eqnarray*}
Thus, by letting $K = \log(\alpha_{\max}/\alpha_{\min}) \vee\log
(\beta_{\max}/\beta_{\min})$,
Inequalities \eqref{eq:cv_post_ratio_1}
and \eqref{eq:cv_post_ratio_2} are satisfied as soon as we
have
%
\begin{equation}
\label{eq:cv_post_ratio_delta} \bigl(c-2L_0\bigl\|\bpi-\boldsymbol{\pi}^{\star}
\bigr\|_\infty\bigr) (mr_1 + nr_2) \le
\delta^{\bpi}\bigl(s(\mathbf{Z}_n),t(\mathbf{W}_{m}),
\mathbf {z}_n,\mathbf{w}_m\bigr) \le C (mr_1
+ nr_2).
\end{equation}
Note that the latter inequality is defined on the set of equivalent
configurations $\tU$ and we can thus replace $(s(\mathbf{Z}_n),
t(\mathbf{W}_{m}))$ by
$(\mathbf{Z}_n, \mathbf{W}_{m})$. Let $(\mathbf{z}_n^{\star
},\mathbf{w}_m^{\star}) $ be a
fixed configuration in $\tU$, consider $(\mathbf{z}_n,\mathbf
{w}_m)\in\tU$. Whenever
$(\mathbf{z}_n,\mathbf{w}_m)\sim(\mathbf{z}_n^{\star},\mathbf
{w}_m^{\star})$, we have $r_1+r_2=0$ and the previous
inequality is automatically satisfied. Thus, we consider $(\mathbf
{z}_n,\mathbf{w}_m
)\in\tU$
such that
$(\mathbf{z}_n,\mathbf{w}_m) \neq(\mathbf{z}_n^{\star},\mathbf
{w}_m^{\star}) $ and let
$r_1:=\|\mathbf{z}_n^{\star}-\tilde{s}(\mathbf{z}_n)\|_0$ and
$r_2:=\|\mathbf{w}_m^{\star}-\tilde{t}(\mathbf{w}_m)\|_0$, where
$(\tilde{s},\tilde{t})\in
\Sig$ realizes the distance $d((\mathbf{z}_n^{\star},\mathbf
{w}_m^{\star}),(\mathbf{z}_n,\mathbf{w}_m))$. We consider
the event
\begin{eqnarray*}
A\bigl(\mathbf{z}_n^{\star},\mathbf{w}_m^{\star},
\mathbf{z}_n,\mathbf {w}_m\bigr) &=& \bigl\{
\delta^{\bpi}\bigl(\mathbf{z}_n^{\star},\mathbf
{w}_m^{\star},\mathbf{z}_n ,\mathbf{w}_m
\bigr) < \bigl(c-2L_0\bigl\|\bpi-\boldsymbol{\pi}^{\star}
\bigr\|_\infty\bigr) (mr_1 + nr_2) \bigr\}
\\
&&{}\cup \bigl\{ \delta^{\bpi}\bigl(\mathbf{z}_n^{\star},
\mathbf {w}_m^{\star},\mathbf{z}_n,
\mathbf{w}_m\bigr) > C (mr_1 + nr_2) \bigr\}
,
\end{eqnarray*}
where the constants $c,C>0$ have been previously introduced in
Proposition~\ref{prop:CondExp_Order}. We also assume that $\bpi$
satisfies $c-2L_0\|\bpi-\boldsymbol{\pi}^{\star}\|_\infty>0$.
According to this same
proposition, as soon as the configuration $(\mathbf{z}_n^{\star
},\mathbf{w}_m^{\star})$ is regular
in the sense that it belongs to the set $\tU^0$ defined through
Equation \eqref{eq:good_set} and following lines, we obtain that on
the set $\{(\mathbf{Z}_n,\mathbf{W}_{m}) = (\mathbf{z}_n^{\star
},\mathbf{w}_m^{\star})\}$, we have
\begin{eqnarray*}
2\bigl(c-2L_0\bigl\|\bpi-\boldsymbol{\pi}^{\star}
\bigr\|_\infty\bigr) (mr_1 +nr_2) \leq
\mathbb{E}_{\star}^{\mathbf{z}_n^{\star}\mathbf{w}_m^{\star}} \bigl( \delta^{\bpi}\bigl(
\mathbf{z}_n^{\star},\mathbf{w}_m^{\star
},
\mathbf{z}_n,\mathbf{w}_m\bigr) \bigr) \le
\frac{C} 2(mr_1+ nr_2) .
\end{eqnarray*}

We now control the probability of this event. Conditionally on $\{
(\mathbf{Z}_n
,\mathbf{W}_{m}) = (\mathbf{z}_n^{\star},\mathbf{w}_m^{\star})\}$,
the event
$A(\mathbf{z}_n^{\star},\mathbf{w}_m^{\star},\mathbf{z}_n,\mathbf
{w}_m)$ is included in the two-sided deviation of $
\delta^{\bpi}(\mathbf{z}_n^{\star},\mathbf{w}_m^{\star},\mathbf
{z}_n,\mathbf{w}_m) $ from its conditional
expectation $\Delta^{\bpi}(\mathbf{z}_n^{\star},\mathbf
{w}_m^{\star},\mathbf{z}_n,\mathbf{w}_m)$ at a distance at least
\begin{eqnarray*}
&&\min\biggl\{\bigl(c-2L_0\bigl\|\bpi-\boldsymbol{\pi}^{\star}
\bigr\|_\infty\bigr) (mr_1 + nr_2), \frac{C} 2
(mr_1 + nr_2)\biggr\}
\\
&&\quad =\bigl(c-2L_0\bigl\|\bpi-\boldsymbol{\pi}^{\star}
\bigr\|_\infty\bigr) (mr_1 + nr_2)
\ge(c-2L_0\eta) (mr_1 + nr_2).
\end{eqnarray*}
In other words,
\begin{eqnarray}
&&A\bigl(\mathbf{z}_n^{\star},\mathbf{w}_m^{\star},
\mathbf{z}_n,\mathbf {w}_m\bigr)\cap\bigl\{(
\mathbf{Z}_n,\mathbf{W}_{m}) = \bigl(\mathbf{z}_n^{\star
},
\mathbf{w}_m^{\star}\bigr)\bigr\}
\nonumber\\
&&\quad \subset\biggl( \bigl\{ \bigl(\delta^{\bpi} - \Delta^{\bpi}\bigr) \bigl(
\mathbf{z}_n^{\star
},\mathbf{w}_m^{\star},
\mathbf{z}_n,\mathbf{w}_m\bigr) < -\bigl(c-2L_0
\bigl\|\bpi-\boldsymbol{\pi}^{\star}\bigr\|_\infty\bigr) (mr_1+
nr_2) \bigr\}
\nonumber\\
&&\hphantom{\quad \subset\biggl(}\cup \biggl\{ \bigl(\delta^{\bpi} - \Delta^{\bpi}\bigr) \bigl(
\mathbf{z}_n^{\star},\mathbf{w}_m^{\star
},
\mathbf{z}_n,\mathbf{w}_m\bigr) > \frac{C} 2
(mr_1+ nr_2) \biggr\} \biggr)
\nonumber\\
&&\quad \subset \bigl\{ \bigl|\bigl( \delta^{\bpi} - \Delta^{\bpi}\bigr)
\bigl(\mathbf{z}_n^{\star},\mathbf{w}_m^{\star
},
\mathbf{z}_n,\mathbf{w}_m\bigr) \bigr| > (c-2L_0
\eta) (mr_1+ nr_2) \bigr\},\nonumber
\end{eqnarray}
where the last inclusion comes from $(c - 2L_0\eta) \leq C/2$.

Combining this sets' inclusions with Assumption~\ref{hyp:concentration}
yields
%
\begin{eqnarray}
\label{eq:concentration2} &&\mathbb{P}_{\star}\bigl(A\bigl(\mathbf{z}_n^{\star},
\mathbf{w}_m^{\star
},\mathbf{z}_n,
\mathbf{w}_m\bigr)\cap\bigl\{(\mathbf{Z}_n,
\mathbf{W}_{m}) = \bigl(\mathbf{z}_n^{\star},
\mathbf{w}_m^{\star}\bigr)\bigr\}\bigr) \nonumber\\
&&\quad \le
\mathbb{P}_{\star}\bigl((\mathbf{Z}_n,\mathbf{W}_{m})
= \bigl(\mathbf {z}_n^{\star},\mathbf{w}_m^{\star}
\bigr)\bigr)
\nonumber\\[-8pt]\\[-8pt]
&&\qquad {}\times\mathbb{P}_{\star}^{\mathbf{z}_n^{\star}\mathbf
{w}_m^{\star}} \bigl( \bigl| \bigl(
\delta^{\bpi} - \Delta^{\bpi}\bigr) \bigl(\mathbf{z}_n^{\star},
\mathbf {w}_m^{\star},\mathbf{z}_n,
\mathbf{w}_m\bigr) \bigr| > (c-2L_0\eta) (mr_1+
nr_2) \bigr)
\nonumber\\
&&\quad \le2\exp\bigl[- \psi^\star(c-2L_0\eta) (mr_1
+nr_2 ) \bigr] \bmu\bigl(\mathbf {z}_n^{\star},
\mathbf{w}_m^{\star}\bigr).\nonumber
\end{eqnarray}
We now consider the set $\Omega_1$ defined by
%
\begin{eqnarray}
\label{eq:Omega1} \Omega_1 &=&\Omega_0 \cap \biggl(\bigcap
_{(\mathbf{z}_n,\mathbf{w}_m)
\in\tU} \overline{A(\mathbf{Z}_n,
\mathbf{W}_{m},\mathbf{z}_n,\mathbf {w}_m)}
\biggr)
\nonumber\\[-8pt]\\[-8pt]
&=& \bigcup_{(\mathbf{z}_n^{\star},\mathbf{w}_m^{\star}) \in\tU^0} \bigcap
_{(\mathbf{z}_n,\mathbf{w}_m) \in\tU} \bigl( \overline{A\bigl(\mathbf{z}_n^{\star},
\mathbf{w}_m^{\star
},\mathbf{z}_n,
\mathbf{w}_m\bigr)} \cap\bigl\{(\mathbf{Z}_n,
\mathbf{W}_{m}) = \bigl(\mathbf{z}_n^{\star} ,
\mathbf{w}_m^{\star}\bigr)\bigr\} \bigr).\nonumber
\end{eqnarray}
On the set $\Omega_1$,
Inequality \eqref{eq:cv_post_ratio_delta} and thus
Inequalities \eqref{eq:cv_post_ratio_1} and
\eqref{eq:cv_post_ratio_2} are both
satisfied. We let
\[
\tU^{ \mathbf{z}_n^{\star}\mathbf{w}_m^{\star}} := \tU \smallsetminus\bigl\{ \bigl(\mathbf{z}_n^{\star},
\mathbf{w}_m^{\star}\bigr) \bigr\} =\tU\smallsetminus\bigl\{
\bigl(s\bigl(\mathbf{z}_n^{\star}\bigr),t\bigl(\mathbf
{w}_m^{\star}\bigr)\bigr) ;(s,t)\in\Sig\bigr\},
\]
be the set of all configurations but those which are equivalent to
$(\mathbf{z}_n^{\star},\mathbf{w}_m^{\star})$.
Since for any $(s,t)\in\Sig$, the
event $A(\mathbf{z}_n^{\star},\mathbf{w}_m^{\star},s(\mathbf
{z}_n^{\star}),t(\mathbf{w}_m^{\star}))$ has $\mathbb{P}_{\star
}$-probability zero,
we may write
\begin{eqnarray*}
\overline\Omega_1 = \overline\Omega_0 \cup \biggl(
\bigcup_{(\mathbf
{z}_n^{\star}
,\mathbf{w}_m^{\star}) \in
\tU^0} \bigcup_{(\mathbf{z}_n,\mathbf{w}_m) \in\tU^{\mathbf
{z}_n^{\star}\mathbf{w}_m^{\star}}}
A\bigl(\mathbf{z}_n^{\star},\mathbf {w}_m^{\star},
\mathbf{z}_n ,\mathbf{w}_m\bigr) \cap \bigl\{(
\mathbf{Z}_n,\mathbf{W}_{m}) = \bigl(\mathbf{z}_n^{\star},
\mathbf {w}_m^{\star}\bigr)\bigr\} \biggr).
\end{eqnarray*}
We now partition the set of configurations $(\mathbf{z}_n,\mathbf
{w}_m)\in\tU^{\mathbf{z}_n^{\star}
\mathbf{w}_m^{\star}}$ according to
the distance of each point $(\mathbf{z}_n,\mathbf{w}_m)$ to
$(\mathbf{z}_n^{\star},\mathbf{w}_m^{\star})$.
We write the following disjoint union
%
\begin{eqnarray}\label{eq:decomp_config}
\tU^{\mathbf{z}_n^{\star}\mathbf{w}_m^{\star}} &:=& \bigsqcup_{r_1+r_2=1}^{n+m}
\tU^{\mathbf{z}_n^{\star}\mathbf
{w}_m^{\star}}(r_1,r_2)
\nonumber
\\
&:=& \bigsqcup_{r_1+r_2=1}^{n+m} \bigl\{(
\mathbf{z}_n,\mathbf{w}_m)\in \tU^{\mathbf{z}_n^{\star}\mathbf{w}_m^{\star}} ;d\bigl(
\bigl(\mathbf{z}_n^{\star},\mathbf{w}_m^{\star}
\bigr);\nonumber\\[-8pt]\\[-8pt]
&&\hphantom{\bigsqcup_{r_1+r_2=1}^{n+m} \bigl\{} (\mathbf {z}_n,\mathbf{w}_m)\bigr)=\bigl\|
\mathbf{z}_n^{\star}-s(\mathbf{z}_n)
\bigr\|_0
+\bigl\|\mathbf{w}_m^{\star}-t(\mathbf{w}_m)
\bigr\|_0 \mbox{ and}\nonumber\\
&&\hphantom{\bigsqcup_{r_1+r_2=1}^{n+m} \bigl\{} \bigl\|\mathbf{z}_n^{\star}-s(
\mathbf{z}_n)\bigr\|_0 =r_1, \bigl\|
\mathbf{w}_m^{\star}-t(\mathbf{w}_m)
\bigr\|_0 =r_2\bigr\}. \nonumber
\end{eqnarray}
Note that the above decomposition is not unique. Indeed, we may have
that the distance $d((\mathbf{z}_n^{\star},\mathbf{w}_m^{\star
});(\mathbf{z}_n,\mathbf{w}_m))$ $=r_1+r_2=r_1'+r_2'$
but $r_1\neq r_1'$ and
$r_2\neq r_2'$. In such a case, we make an arbitrary choice between
the couples $(r_1,r_2)$ and $(r_1',r_2')$ to represent the distance
from $(\mathbf{z}_n,\mathbf{w}_m)$ to $(\mathbf{z}_n^{\star
},\mathbf{w}_m^{\star})$.
This decomposition leads to
\begin{eqnarray*}
&&\mathbb{P}_{\star}(\overline\Omega_1) \le
\mathbb{P}_{\star
}(\overline\Omega_0) + 2\sum
_{(\mathbf{z}_n^{\star},\mathbf{w}_m^{\star})\in\tU^0} \bmu\bigl(\mathbf{z}_n^{\star},
\mathbf{w}_m^{\star}\bigr)
\\
&&\hphantom{\mathbb{P}_{\star}(\overline\Omega_1) \le
\mathbb{P}_{\star
}(\overline\Omega_0) + 2\sum
_{(\mathbf{z}_n^{\star},\mathbf{w}_m^{\star})\in\tU^0}}{}\times\sum_{r_1+r_2=1}^{n+m} \bigl|
\tU^{\mathbf{z}_n^{\star}\mathbf{w}_m^{\star}}(r_1,r_2)\bigr| \exp \bigl[-
\psi^\star(c-2L_0\eta) (mr_1+nr_2)
\bigr] .
\end{eqnarray*}
Now, we use the bound
%
\begin{equation}
\label{eq:cardinal} \bigl|\tU^{\mathbf{z}_n^{\star}\mathbf{w}_m^{\star}}(r_1,r_2)\bigr|\le|\Sig |
{n \choose r_1} {m \choose r_
2} ,
\end{equation}
which leads to
\begin{eqnarray*}
\mathbb{P}_{\star}(\overline\Omega_1) &\le&
\mathbb{P}_{\star
}(\overline\Omega_0) + 2 \sum
_{r_1+r_2=1}^{n+m} |\Sig| {n \choose r_1} {m
\choose r_ 2} \exp\bigl[-\psi ^\star(c-2L_0\eta)
(mr_1+nr_2) \bigr]
\\
& \le&\mathbb{P}_{\star}(\overline\Omega_0) + 2 |\Sig| \bigl[
\bigl\{1+\exp\bigl[-m \psi^\star(c-2L_0\eta) \bigr]\bigr
\}^{n} \bigl\{1+\exp\bigl[-n\psi^\star(c-2L_0\eta)
\bigr]\bigr\}^{m} - 1 \bigr] .
\end{eqnarray*}
We now rely on the following bound, valid for any $u,v>0$,
%
\begin{equation}
\label{eq:(1+u)^n} (1+u)^{n} \times(1+v )^{m} - 1 \le(nu+mv)
\exp(nu+mv).
\end{equation}
Combining the latter with the control of the probability of $\overline
\Omega_0$ given in Proposition~\ref{prop:CondExp_Order}, we obtain
\begin{eqnarray*}
\mathbb{P}_{\star}(\overline\Omega_1) \le2QL \exp\bigl(-(n
\wedge m) \mu_{\min}^2/2\bigr) + 2 |\Sig| d_{n,m}
\exp(d_{n,m}),
\end{eqnarray*}
where $d_{n,m} = [n\exp\{-\psi^\star(c-2L_0\eta) m \} + m\exp\{
-\psi^\star(c-2L_0\eta) n \}]$.

Note that as soon as $(m_n)_{n\ge1} $ is a sequence such that $m_n\to
+\infty$ and $(\log
n)/m_n \to0$, we
obtain that for any constant $a>0$, the sequence $u_n=n\exp(-am_n)$ is
negligible with respect to $n^{-1-s}$, for any $s>0$, and thus $\sum_n
u_n <+\infty$. In particular, the sequence
\[
\eps_{n,m} :=2QL \exp\bigl[-(n \wedge m) \mu_{\min}^2/2
\bigr] + 2 |\Sig | d_{n,m} \exp(d_{n,m})
\]
satisfies $\sum_{n} \eps_{n,m_n} <+\infty$.
As for SBM, it is easy to see that this expression reduces to
\[
\eps_{n,n} :=2Q \exp\bigl[-n \alpha_{\min}^2/2
\bigr] + 2 |\Sig| d_n \exp(d_n)
\]
with $d_n= n\exp\{-2\psi^\star(c-2L_0\eta) n\} $ and which also
satisfies $\sum_{n} \eps_{n,n} <+\infty$. This concludes the proof.
\end{pf*}
\begin{pf*}{Proof of Corollaries \ref{cor:1}, \ref{cor:2} and \ref{cor:3}}
The proof of these three corollaries relies on the same scheme that we
shall now present in LBM notation. The proof is easily generalised
to SBM.
First, note that $\Omega_1 = \bigcup_{(\mathbf{z}_n^{\star},\mathbf
{w}_m^{\star}) \in
\U^0} (\Omega_1 \cap
\{(\mathbf{Z}_n,\mathbf{W}_{m}) = (\mathbf{z}_n^{\star},\mathbf
{w}_m^{\star}) \})$.
Let us fix some configuration $(\mathbf{z}_n^{\star},\mathbf
{w}_m^{\star}) $ in $\U^0$. On the set
$\Omega_1 \cap\{(\mathbf{Z}_n,\mathbf{W}_{m}) = (\mathbf
{z}_n^{\star},\mathbf{w}_m^{\star}) \}$, we have
\begin{eqnarray*}
1 - p_{n,m}^{\theta} \bigl( \bigl\{ (\mathbf{Z}_n,
\mathbf{W}_{m}) \bigr\} \bigr) &\le& \frac{1 - p_{n,m}^{\theta} ( \{ (\mathbf{Z}_n,\mathbf{W}_{m})
\}
) }{p_{n,m}^{\theta} ( \{ (\mathbf{Z}_n,\mathbf{W}_{m})
\}  )}
\\
&=& \mathop{\mathop{\sum}_{(\mathbf{z}_n,\mathbf{w}_m)\in\U}}_{ (\mathbf
{z}_n,\mathbf{w}_m)\nsim(\mathbf{z}_n^{\star}
, \mathbf{w}_m^{\star})} \exp \biggl(-
\log\frac{p_{n,m}^{\theta} ( \{ (\mathbf
{z}_n^{\star},\mathbf{w}_m^{\star}) \}  ) }{
p_{n,m}^{\theta}(\mathbf{z}_n,\mathbf{w}_m)} \biggr) ,
\end{eqnarray*}
where we abbreviate to $ \{ (\mathbf{Z}_n,\mathbf{W}_{m}) \}$ and $\{
(\mathbf{z}_n^{\star},\mathbf{w}_m^{\star}) \} $ the
whole sets of configurations $ \{ (\mathbf{z}_n, \mathbf{w}_m)
\sim(\mathbf{Z}_n,\mathbf{W}_{m}) \}$
and $ \{ (\mathbf{z}_n, \mathbf{w}_m) \sim(\mathbf{z}_n^{\star
},\mathbf{w}_m^{\star}) \}$, respectively.
Let $(\mathbf{z}_n, \mathbf{w}_m) \nsim(\mathbf{z}_n^{\star},
\mathbf{w}_m^{\star})$. There exists $(s, t) \in
\Sig$ such that $\| \mathbf{z}_n- s(\mathbf{z}_n^{\star}) \|_0 =
r_1$ and $\| \mathbf{w}_m- t(\mathbf{w}_m^{\star})
\|_0 = r_2$. Using Inequality \eqref{eq:cv_post_ratio_1} and $\|\bpi
-\bpi^\star\|_\infty
\le\eta$, we get
\begin{eqnarray*}
\log\frac{p_{n,m}^{\theta} ( \{ (\mathbf{z}_n^{\star},\mathbf
{w}_m^{\star}) \}
) }{ p_{n,m}^{\theta}(\mathbf{z}_n,\mathbf{w}_m)} \ge\log \frac{p_{n,m}^{\theta}(s(\mathbf{z}_n^{\star}),
t(\mathbf{w}_m^{\star})) }{ p_{n,m}^{\theta}(\mathbf{z}_n,\mathbf
{w}_m) } \ge(c-2L_0\eta)
(mr_1+nr_2) + K(r_1 + r_2)
\end{eqnarray*}
and therefore
%
\begin{equation}
\label{eq:borne_inf_p} 1- p_{n,m}^{\theta} \bigl( \bigl\{ (
\mathbf{Z}_n,\mathbf{W}_{m}) \bigr\} \bigr) \le \mathop{
\mathop{\sum}_{(\mathbf{z}_n,\mathbf{w}_m)\in\U}}_{ (\mathbf
{z}_n,\mathbf{w}_m)\nsim(\mathbf{z}_n^{\star},
\mathbf{w}_m^{\star})} \exp\bigl[-
(c-2L_0\eta) (mr_1+nr_2) +
K(r_1 + r_2)\bigr] .
\end{equation}
When $\Sig=\{(\mathit{Id},\mathit{Id})\}$, the set $ \{ (\mathbf{z}_n, \mathbf{w}_m)
\sim(\mathbf{Z}_n,\mathbf{W}_{m})
\} $ reduces to a singleton and the previous bound becomes
\begin{eqnarray*}
1 - p_{n,m}^{\theta}(\mathbf{Z}_n,
\mathbf{W}_{m}) \le\mathop{\mathop{\sum} _{(\mathbf{z}_n,\mathbf{w}_m)\in\U}}_{ (\mathbf{z}_n
,\mathbf{w}_m)\nsim(\mathbf{z}_n^{\star}, \mathbf{w}_m^{\star
})}
\exp\bigl[- (c-2L_0\eta) (mr_1+nr_2) +
K(r_1+r_2)\bigr] .
\end{eqnarray*}
Using the decomposition \eqref{eq:decomp_config} on the set $\tU
^{\mathbf{z}_n^{\star}\mathbf{w}_m^{\star}}$ and the bound \eqref
{eq:cardinal} on the cardinality of
each $\tU^{\mathbf{z}_n^{\star},\mathbf{w}_m^{\star}}(r_1,r_2)$,
we get
\begin{eqnarray*}
1 - p_{n,m}^{\theta}(\mathbf{Z}_n,
\mathbf{W}_{m}) &\le&\sum_{r_1
+r_2= 1}^{n+m}
{n \choose r_1} {m \choose r_ 2} \exp\bigl[-
(c-2L_0\eta) (mr_1+nr_2)+
K(r_1+r_2) \bigr]\nonumber
\\
&=& \bigl\{\bigl(1+\exp(-mc_1+K)\bigr)^n \bigl(1+
\exp(-nc_1+K )\bigr)^m - 1 \bigr\},\nonumber
\end{eqnarray*}
where $c_1=c-2L_0\eta$.
Using again Inequality \eqref{eq:(1+u)^n}, we obtain
\begin{eqnarray*}
1 - p_{n,m}^{\theta}(\mathbf{Z}_n,
\mathbf{W}_{m}) \le a_{n,m} \exp (a_{n,m}) ,
\end{eqnarray*}
where $ a_{n,m} = (n\mathrm{e}^{-(c-2L_0\eta)m+K } +
m\mathrm{e}^{-(c-2L_0\eta)n+K })$. In SBM, this quantity becomes $a_{n,n} =
n\mathrm{e}^{-2(c-2L_0\eta)n+K } $.

The case where $K=0$ is handled similarly and gives
\begin{eqnarray*}
1 - p_{n,m}^{\theta} \bigl( \bigl\{ (\mathbf{Z}_n,
\mathbf{W}_{m}) \bigr\} \bigr) \le|\Sig|a_{n,m} \exp
(a_{n,m}) ,
\end{eqnarray*}
with the same definition of $a_{n,m}$, replacing $K$ with $0$.

Moreover when $K = 0$, we have $\alpha_1 = \cdots= \alpha_Q$ and
$\beta_1 = \cdots= \beta_L$ and it easy to check that
\begin{eqnarray*}
p_{n,m}^{\theta}(\mathbf{Z}_n,\mathbf{W}_{m})
= p_{n,m}^{\theta
}\bigl(s(\mathbf{Z}_n),t(
\mathbf{W}_{m})\bigr)
\end{eqnarray*}
for all $(s,t) \in\Sig$.

Now, in the general case, we come back to \eqref{eq:borne_inf_p}.
Using the decomposition \eqref{eq:decomp_config} on the set $\tU
^{\mathbf{z}_n^{\star}\mathbf{w}_m^{\star}}$ and the bound \eqref
{eq:cardinal} on the cardinality of
each $\tU^{\mathbf{z}_n^{\star},\mathbf{w}_m^{\star}}(r_1,r_2)$,
we get
\begin{eqnarray*}
1 - p_{n,m}^{\theta} \bigl( \bigl\{ (\mathbf{Z}_n,
\mathbf{W}_{m}) \bigr\} \bigr) &\le&\sum_{r_1 +r_2=
1}^{n+m}
|\Sig| {n \choose r_1} {m \choose r_ 2} \exp\bigl[-
(c-2L_0\eta) (mr_1+nr_2) +
K(r_1+r_2)\bigr]
\\
& \le&|\Sig| \bigl\{\bigl(1+\exp(-mc_1 + K)\bigr)^n
\bigl(1+\exp(-nc_1 + K)\bigr)^m - 1 \bigr\},
\end{eqnarray*}
where $c_1=c-2L_0\eta$. Using again Inequality \eqref{eq:(1+u)^n},
we obtain
\begin{eqnarray*}
1 - p_{n,m}^{\theta} \bigl( \bigl\{ (\mathbf{z}_n,
\mathbf{w}_m) \sim (\mathbf{Z}_n,\mathbf{W}_{m})
\bigr\} \bigr) \le |\Sig| a_{n, m}\exp(a_{n,m}) ,
\end{eqnarray*}
with same definition of $a_{n,m}$ as previously.

We now provide an upper bound for the posterior probability of the
class $\{ (\mathbf{z}_n, \mathbf{w}_m) \sim(\mathbf{Z}_n,\mathbf
{W}_{m}) \} $, valid on the set
$\Omega_1$. Let us fix some configuration $(\mathbf{z}_n^{\star
},\mathbf{w}_m^{\star}) $ in $\U^0$.
On the set $\Omega_1 \cap\{(\mathbf{Z}_n,\mathbf{W}_{m}) = (\mathbf
{z}_n^{\star},\mathbf{w}_m^{\star}) \}$, we have
\begin{eqnarray*}
\frac{1} {p_{n,m}^{\theta} ( \{ (\mathbf{Z}_n,\mathbf{W}_{m}) \}
)} =1+\sum_{(\mathbf{z}_n,\mathbf{w}_m)\nsim(\mathbf{Z}_n,\mathbf
{W}_{m})}\exp \biggl(- \log
\frac{p_{n,m}^{\theta} ( \{ (\mathbf{Z}_n,\mathbf{W}_{m}) \}
) }{ p_{n,m}^{\theta}(\mathbf{z}_n,\mathbf{w}_m)} \biggr)
\end{eqnarray*}
and relying on Inequality \eqref{eq:cv_post_ratio_2}, we get
\begin{eqnarray*}
p_{n,m}^{\theta} \bigl( \bigl\{ \bigl(\mathbf{z}_n^{\star},
\mathbf {w}_m^{\star}\bigr) \bigr\} \bigr) &\le& \biggl\{1+\sum
_{(\mathbf{z}_n,\mathbf{w}_m)\nsim(\mathbf
{z}_n^{\star},\mathbf{w}_m^{\star})}\exp \biggl(- \log \frac{p_{n,m}^{\theta} ( \{ (\mathbf{z}_n^{\star},\mathbf
{w}_m^{\star}) \}
) }{ p_{n,m}^{\theta}(\mathbf{z}_n,\mathbf{w}_m)} \biggr)
\biggr\}^{-1}
\\
& \le& \biggl\{1+\sum_{(\mathbf{z}_n,\mathbf{w}_m)\nsim(\mathbf
{z}_n^{\star},\mathbf{w}_m^{\star})}\exp \bigl( -C
(mr_1 + nr_2) - K (r_1 + r_2)
\bigr) \biggr\}^{-1}.
\end{eqnarray*}
Following the same lines, we obtain the desired upper-bounds.
\end{pf*}


\section{Examples of application}\label{sec:applis}

The goal of this section is to derive the results of
Theorem~\ref{thm:cv_posterior} and following corollaries in many
different setups. The key ingredient for that lies in establishing
the concentration of the ratio $\delta^{\bpi}$ around its conditional
expectation $\Delta^{\bpi}$ (namely
Assumption~\ref{hyp:concentration}). As mentioned in
Remarks \ref{rem:concentration} and \ref{rem:exponential_families}, it
is valid for many exponential families. We will first present the
general proof for exponential families and then state the results for
common exponential families.

\subsection{Scheme of proof of concentration inequalities}
\label{sec:scheme}

One of the main issues for Theorem~\ref{thm:cv_posterior} to be valid
is the existence of a concentration of the ratio $\delta^{\bpi}$
around its conditional expectation $\Delta^{\bpi}$, namely
Assumption~\ref{hyp:concentration}. This section presents the general
methodology that will be employed.

The scheme of proof is as follows. Relying on the notation of
Assumption~\ref{hyp:concentration} and using \eqref{eq:esp_delta}, we
write
\begin{eqnarray*}
&&\delta^{\bpi} \bigl(\mathbf{z}_n^{\star},
\mathbf{w}_m^{\star},\mathbf {z}_n,
\mathbf{w}_m\bigr) - \Delta^{\bpi} \bigl(\mathbf{z}_n^{\star},
\mathbf{w}_m^{\star},\mathbf{z}_n,\mathbf
{w}_m\bigr)
\\
&&\quad = \sum_{(i,j) \in\I} \log \biggl( \frac{f(X_{ij};\pi_{z^\star_i
w^\star_j})}{f(X_{ij};\pi_{z_i w_j})} \biggr)
- \mathbb{E}_{\theta}^{\mathbf{z}_n^{\star}\mathbf{w}_m^{\star}} \log \biggl( \frac{f(X_{ij};\pi_{z^\star_i
w^\star_j})}{f(X_{ij};\pi_{z_i w_j})}
\biggr) := \sum_{(i,j)\in\I}Y_{ij}.
\end{eqnarray*}
Conditional on $(\mathbf{Z}_n,\mathbf{W}_{m})=(\mathbf{z}_n^{\star
},\mathbf{w}_m^{\star})$, the random variables $Y_{ij}$
are independent and centered. There are exactly
$D:=\diff(\mathbf{z}_n^{\star},\mathbf{w}_m^{\star},\mathbf
{z}_n,\mathbf{w}_m)$ such non-null variables and since
$D \le mr_1+nr_2-r_1r_2 \le mr_1+nr_2$, we may write
%
\begin{equation}
\label{eq:etape_conc} \mathbb{P}_{\star}^{\mathbf{z}_n^{\star}\mathbf{w}_m^{\star}} \bigl( \bigl\llvert
\bigl(\delta^{\bpi}- \Delta^{\bpi}\bigr) \bigl(
\mathbf{z}_n^{\star},\mathbf{w}_m^{\star},
\mathbf {z}_n,\mathbf{w}_m\bigr) \bigr\rrvert \ge
\eps(mr_1+nr_2) \bigr) \le\mathbb{P}_{\star}^{\mathbf{z}_n^{\star}\mathbf{w}_m^{\star}}
\biggl( \biggl| \sum_{(i,j)\in\I}Y_{ij} \biggr| \ge\eps D
\biggr) .
\end{equation}
Thus, the problem boils down to establishing a concentration
inequality for the sum $\sum Y_{ij}$ composed of $D$ conditionally
independent and centered random variables. As soon as we have the
existence of a positive function $\psi^\star_{\max}$ such that for any
$\epsilon>0$,
%
\begin{equation}
\label{eq:key_conc} \mathbb{P}_{\star}^{\mathbf{z}_n^{\star}\mathbf{w}_m^{\star}} \biggl(\biggl | \sum
_{(i,j)\in\I}Y_{ij} \biggr| \ge \eps D \biggr) \le2\exp\bigl\{-
\psi^\star_{\max}(\eps) D\bigr\},
\end{equation}
we can combine Lemma~\ref{lem:Bound_Number} and
bound \eqref{eq:etape_conc} to obtain
\begin{eqnarray*}
\mathbb{P}_{\star}^{\mathbf{z}_n^{\star}\mathbf{w}_m^{\star}} \biggl( \biggl| \sum
_{(i,j)\in\I}Y_{ij} \biggr| \ge \eps (mr_1+nr_2)
\biggr) &\le&2\exp \bigl\{-\psi^\star_{\max}(\eps)
\mu^2_{\min}(mr_1+nr_2)/8 \bigr\}
\\
&:=&2\exp \bigl\{-\psi^\star(\eps) (mr_1+nr_2)
\bigr\} ,
\end{eqnarray*}
with $\psi^\star(\cdot)=\psi^\star_{\max}(\cdot)\mu_{\min
}^2/8$. Note
that Inequality \eqref{eq:key_conc} is often obtained through a
Cramer--Chernoff bound in the following way. We let
$\psi_{ij}(\lambda):=\log\mathbb{E}_{\star}^{\mathbf{z}_n^{\star
}\mathbf{w}_m^{\star}}(\exp(\lambda
Y_{ij}))$, for
any $\lambda>0$ such that this quantity is finite, let us say $\lambda
\in I\subset\mathbb{R}$. Using a Cramer--Chernoff bound, we get for
any $x>0$,
\begin{eqnarray*}
\mathbb{P}_{\star}^{\mathbf{z}_n^{\star}\mathbf{w}_m^{\star}} \bigl( |Y_{ij} | \ge x \bigr) \le 2
\exp \Bigl\{ -\sup_{\lambda\in I} \bigl(\lambda x -\psi_{ij}(
\lambda )\bigr) \Bigr\}.
\end{eqnarray*}
As soon as we can uniformly bound this quantity (uniformly with respect
to $i,j$ and also
underlying $\bpi$), namely if we can
write
\begin{eqnarray*}
\mathbb{P}_{\star}^{\mathbf{z}_n^{\star}\mathbf{w}_m^{\star}} \bigl( |Y_{ij} | \ge x \bigr) \le 2
\exp \Bigl\{ -\sup_{\lambda\in I} \bigl(\lambda x -\psi_{\max}(
\lambda )\bigr) \Bigr\},
\end{eqnarray*}
with $\psi_{\max}:= \sup_{\bpi\in\Pi_{\Q\calL}} \max_{(i,j)\in
\I}\psi_{ij}$, the conditional
independence of the $Y_{ij}$'s gives that for any $\epsilon>0$, and
any $\lambda>0$,
\[
\mathbb{P}_{\star}^{\mathbf{z}_n^{\star}\mathbf{w}_m^{\star}} \biggl( \biggl| \sum
_{(i,j)\in\I}Y_{ij} \biggr| \ge \eps D \biggr) \le2\exp\bigl\{ -
\bigl(\lambda\epsilon D -D\psi_{\max}(\lambda)\bigr)\bigr\},
\]
leading to
\[
\mathbb{P}_{\star}^{\mathbf{z}_n^{\star}\mathbf{w}_m^{\star}} \biggl( \biggl| \sum
_{(i,j)\in\I}Y_{ij} \biggr| \ge \eps D \biggr) \le2\exp\Bigl\{
-D\sup_{\lambda\in I} \bigl(\lambda\epsilon -\psi_{\max}(
\lambda)\bigr)\Bigr\} \le2\exp \bigl\{-D \psi^\star_{\max}(
\epsilon) \bigr\},
\]
where $\psi^\star_{\max} (\epsilon): = \sup_{\lambda\in I}
(\lambda
\epsilon-\psi_{\max}(\lambda))$. Note that since $\psi_{ij}(0)=0$, we
have $\psi_{\max}(0)=0$ and $\psi^\star_{\max}$ is non-negative.

\subsection{Examples from exponential families}
\label{sec:comm-expon-famil}

We state here the rate functions $\psi^\star$ and validity
assumptions of our main result under several models for the
observations $X_{ij}$, all included in the
exponential family framework.

\subsubsection*{Binary model} Let $X_{ij} \in\{0,1\}$ and
$f(\cdot;\pi)$ a Bernoulli distribution with parameter $\pi$.
Assumptions \ref{hyp:concentration} to \ref{hyp:Lipschitz} are
satisfied if the paramater set is bounded away from 0 and 1, namely
$\Pi\subset[a, 1-a]$ for some $a \in(0, 1/2)$. The corresponding
rate function, given by Hoeffding's inequality is
%
\begin{equation}
\label{eq:binary_rate_function_hoeffding} \psi^\star(x)= x^2 \mu_{\min}^2
/\bigl\{16\bigl[\log(1-a)-\log a\bigr]^2\bigr\} .
\end{equation}
In the interesting special case where $\Pi\subset\xi[a, 1 -a]$ with
$\xi> 0$ small, Bernstein's inequality gives the sharper rate function
\begin{eqnarray*}
\psi^\star(x)= x^2\mu_{\min}^2 /\bigl
\{64\xi\bigl[\log(1-a)-\log a\bigr]^2 + 32x\bigl[\log(1-a) - \log a
\bigr]/3\bigr\}
\end{eqnarray*}
which gives the following rate function for deviations of order $\xi$:
%
\begin{eqnarray}
\label{eq:binary_rate_function_bernstein} \psi^\star(\xi x) & = & \xi\breve{\psi}^\star(x)\qquad
\mbox{where}
\nonumber\\[-8pt]\\[-8pt]
\breve{\psi}^\star(x) & = & x^2\mu_{\min}^2
/ \bigl\{64\bigl[\log(1-a)-\log a\bigr]^2 + 32x\bigl[\log(1-a) - \log
a\bigr]/3\bigr\}.
\nonumber
\end{eqnarray}
This small deviation rate function is useful for sparse asymptotics,
as studied in Section~\ref{sec:pi_vers0}.

\subsubsection*{Binomial model}\label{binary} Let $X_{ij} \in
\{0,\dots,p\}$ and $f(\cdot,\pi)$ a binomial distribution
$\mathcal{B}(p, \pi)$. Assumptions \ref{hyp:concentration}
to \ref{hyp:Lipschitz} are satisfied if the paramater set is bounded
away from 0 and 1, namely $\Pi\subset[a, 1-a]$ for some $a \in(0,
1/2)$. The corresponding rate function, given by Hoeffding's
inequality is
\begin{eqnarray*}
\psi^\star(x)= x^2 \mu_{\min}^2 /\bigl
\{16p^2\bigl[\log(1-a)-\log a\bigr]^2\bigr\}.
\end{eqnarray*}

\subsubsection*{Multinomial model} Let $X_{ij}$ be
discrete with $p$ levels labelled $1$ to $p$, parameter $\pi=
(\pi(1), \dots, \pi(p))$ and $f(k, \pi) = \pi(k)$.
Assumptions \ref{hyp:concentration} to \ref{hyp:Lipschitz} are
satisfied if the paramater set for $(\pi(k))_{1 \leq k \leq p}$ is
bounded away from 0 and 1, namely $\Pi\subset[a, 1-a]^p$ for some $a
\in(0, 1/2)$. The corresponding rate function, given by Hoeffding's
inequality is
\begin{eqnarray*}
\psi^\star(x)= x^2 \mu_{\min}^2 /\bigl
\{8p\bigl[\log(1-a)-\log a\bigr]^2\bigr\}.
\end{eqnarray*}

\subsubsection*{Poisson model} Let $X_{ij} \in\mathbb{N}$
and $f(\cdot, \pi)$ is a Poisson distribution with parameter $\pi$.
Assumptions \ref{hyp:concentration} to \ref{hyp:Lipschitz} are
satisfied if the paramater set is bounded away from 0 and $+\infty$,
namely $\Pi\subset[\pi_{\min}, \pi_{\max}] \subset(0, +\infty)$.
The corresponding rate function, given by a Cramer--Chernoff bound for
Poisson variable (see, for instance, \cite{Massart_07}) is
\begin{eqnarray*}
\psi^\star(x) = \frac{1} 8 \mu_{\min}^2
\pi_{\max} h \biggl( \frac
{x}{\pi_{\max}
\log(\pi_{\max} / \pi_{\min})} \biggr) ,
\end{eqnarray*}
where $\forall u \geq-1 , h(u) = (1+u)\log(1+u) - u $.

\subsubsection*{Gaussian location model} We are interested here in
Gaussian observations in the homoscedastic case. We assume that
$X_{ij} \in\mathbb{R}$ and $f(\cdot, \pi)$ is a Gaussian distribution
with mean value $\pi$ and fixed variance $\sigma^2$, namely $f(x,
\pi_{ij}) = c \exp\{ -(x - \pi_{ij})^2 / (2\sigma^2) \}$ where $c$ is
a normalizing constant. Assumptions \ref{hyp:concentration}
to \ref{hyp:Lipschitz} are satisfied if the paramater set is bounded
away from $-\infty$ and $+\infty$, namely $\Pi\subset[\pi_{\min},
\pi_{\max}] \subset\mathbb{R}$. The corresponding rate function,
given by a Cramer--Chernoff bound for Gaussian variables is
\begin{eqnarray*}
\psi^\star(x)= \frac{\mu_{\min}^2 \sigma^2 x^2}{16(\pi_{\max} -
\pi_{\min})^2} .
\end{eqnarray*}

\subsubsection*{Gaussian scale model} We are interested here in Gaussian
observations with fixed mean and different variances. We assume that
$X_{ij} \in\mathbb{R}$ and $f(\cdot, \pi)$ is a Gaussian distribution
with fixed mean value $m$ and variance $\pi\in(0, +\infty)$, namely
$f(x, \pi_{ij}) = c(\pi_{ij})^{-1/2} \exp\{ -(x - m)^2 / (2\pi_{ij}^2)
\}$ where $c$ is a normalizing constant.
Assumptions \ref{hyp:concentration} to \ref{hyp:Lipschitz} are
satisfied if the paramater set is bounded away from $0$ and $+\infty$,
namely $\Pi\subset[\pi_{\min}, \pi_{\max}] \subset(0, +\infty
)$. The
corresponding rate function, given by a Cramer--Chernoff bound for
$\chi^2(1)$ random variables is
\begin{eqnarray*}
\psi^\star(x)= \frac{\mu_{\min}^2\sigma^2x}{8(\pi_{\max} -
\pi_{\min})} + \frac{\mu_{\min}^2}{16} \log \biggl\{ 1
+ \frac{2\pi_{\min}x}{\pi_{\max} - \pi_{\min}} \biggr\} .
\end{eqnarray*}

\subsection{Zero-inflated distributions}\label{sec:weighted}
Here, we assume that $X_{ij}$ follows a mixture of a Dirac mass at
zero with another distribution (on $\mathbb{R}$ for instance). This
situation is particularly relevant for modeling sparse matrices
\cite{Ambroise_Matias}. In this context, the former parameter $\pi$
becomes now $(\pi,\gamma)\in(0,1)\times\Gamma$ and we let
\begin{eqnarray*}
f(\cdot;\pi,\gamma)= \pi\tilde{f}(\cdot;\gamma) +(1-\pi)\delta _0(
\cdot),
\end{eqnarray*}
where $\delta_0$ is the Dirac mass at $0$. The set of parameter
matrices $ (\bpi,\boldsymbol{\gamma}):=((\pi_{ql}),
( \gamma_{ql}))_{q \in\Q, l \in\calL}$ is
denoted by $\Pi_{\Q\calL}\times\Gamma_{\Q\calL}$.
For identifiability
reasons, we also constrain the parametric family
$\{ \tilde{f}(\cdot;\gamma) ;\gamma\in\Gamma\}$ such that any
distribution in this set admits a continuous cumulative distribution
function (c.d.f.) at zero. Moreover, we shall assume that the
distributions $\{ \tilde{f}(\cdot;\gamma); \gamma\in\Gamma\}$
satisfy Assumption~\ref{hyp:Bound_KL}.

For instance, $\tilde{f}(\cdot;\gamma)$ may be absolutely continuous
with respect to the Lebesgue measure. Another interesting case
consists in considering the density (with respect to the counting
measure) of the Poisson distribution, with parameter $\gamma$, but
truncated at zero. Namely, for any $k\ge1$, we let $ \tilde
f(k ;\gamma)=\gamma^k/(k!) (\mathrm{e}^\gamma-1)^{-1}$. This leads to
zero-inflated Poisson models and more generally, one could consider
other zero-inflated counts models.

In the following, we will assume that the parameter set $\Pi$ is
included in $ [a,1-a]$ for some $a\in(0,1/2)$ and that the family
$\{\tilde{f}(\cdot;\gamma); \gamma\in\Gamma\}$ satisfies a
concentration property on its likelihood ratio statistics as follows.

\begin{assumption}
\label{hyp:concentration_tildef}
Fix $(\mathbf{z}_n^{\star},\mathbf{w}_m^{\star}) \in\tU_0$ and $
(\mathbf{z}_n,\mathbf{w}_m)$ in $\tU$ with
$(\mathbf{z}_n^{\star},\mathbf{w}_m^{\star}) \neq(\mathbf
{z}_n,\mathbf{w}_m)$. Let $\tilde Y_{ij}=\log[\tilde
f(X_{ij} ; \gamma_{z^\star_iw^\star_j})/\tilde f
(X_{ij};\gamma_{z_iw_j})] + c$, where $c$ is a centering constant.
There exists a positive function $\tilde
{\psi}^\star_{\max} \dvtx (0,+\infty)\to(0,+\infty]$ such that for any
$x>0$, for
any $(i,j)\in\I$, and any $\boldsymbol{ \gamma} \in\Gamma_{\Q
\calL}$,
\begin{eqnarray*}
\mathbb{P}_{\star}^{\mathbf{z}_n^{\star}\mathbf{w}_m^{\star}}\bigl(| \tilde Y_{ij}|\ge x
|X_{ij}\neq0\bigr) \le2 \exp\Bigl\{-\sup_{\lambda\in I} \bigl(
\lambda x-\tilde{\psi}_{\max
}(\lambda)\bigr)\Bigr\} := 2\exp\bigl(-
\tilde{\psi}^\star_{\max} (x)\bigr),
\end{eqnarray*}
where $\tilde{\psi}_{\max}(\lambda)= \sup_{\boldsymbol{\gamma}
\in\Gamma_{\Q\calL}} \max_{(i,j)\in\I}\log
\mathbb{E}_{\star}^{\mathbf{z}_n^{\star}\mathbf{w}_m^{\star}}(
\exp(\lambda\tilde{Y}_{ij}) |X_{ij}\neq0)$
exists for
any $\lambda\in I \subset(0,+\infty)$.
\end{assumption}

Under these assumptions, it is easy to see that
Assumption~\ref{hyp:concentration} is satisfied, up to an extra factor
$2$, with
\[
\psi^\star(x)= \bigl\{\mu_{\min}^2 \tilde{
\psi}^\star_{\max}(x/2) / 8 \bigr\}\wedge\psi^\star_{\bin}(x/2),
\]
where $\psi^\star_{\bin}$ is a rate function for binary
observations, defined for instance in
Equations (\ref{eq:binary_rate_function_hoeffding}) or
(\ref{eq:binary_rate_function_bernstein}). Namely, using the same
notation as in Assumption~\ref{hyp:concentration}, we get
\begin{eqnarray*}
\mathbb{P}_{\star}^{\mathbf{z}_n^{\star}\mathbf{w}_m^{\star}} \bigl( \bigl\llvert \bigl(
\delta^{\bpi} - \Delta^{\bpi}\bigr) \bigl(\mathbf{z}_n^{\star},
\mathbf{w}_m^{\star},\mathbf {z}_n,
\mathbf{w}_m\bigr) \bigr\rrvert \ge\eps\{m r_1+n
r_2 \} \bigr) \le4\exp\bigl[- \psi^\star( \eps) \{m
r_1 +n r_2 \} \bigr] .
\end{eqnarray*}

In order to ensure Assumption~\ref{hyp:Lipschitz} on
$f(\cdot;\pi,\gamma)$, we need the same hypothesis to be satisfied on
the family $\{ \tilde{f}(\cdot;\gamma) ;\gamma\in\Gamma\}$.

\begin{assumption}
\label{hyp:Lipschitz_tildef}
There exists some positive constant $\tilde L_0$ such that for any
$\boldsymbol{\gamma},\boldsymbol{\gamma}'\in\Gamma_{\Q\calL}$ and
any $(q,l),(q',l')\in\Q\times\calL$, we have
\begin{eqnarray*}
\biggl\llvert \int_{\X} \log\frac{\tilde f(x;\gamma_{ql})}{\tilde
f(x;\gamma'_{ql})} \tilde f(x;
\gamma_{q'l'}) \,\mathrm{d}x \biggr\rrvert \le \tilde L_0\bigl\|\boldsymbol{
\gamma}-\boldsymbol{\gamma}'\bigr\|_{\infty} .
\end{eqnarray*}
\end{assumption}

Note that we
provided in the previous section many examples of families for which
this assumption is satisfied. Then, the results of Section~\ref{sec:post-distr} apply.

\section{Asymptotically decreasing connections density}
\label{sec:pi_vers0}

In this section, we explore the limiting case where the numbers of
groups $Q$ and $L$ remain constant while the connections probabilities
between groups converge to 0. This framework is interesting as it
models the case
where groups sizes increase linearly with the number of row/column
objects, while the mean number of connections (i.e., non-null
observations in the data matrix) increases only sub-linearly, mimicking
for example budget constraints in terms of global consumptions. More
precisely, we will consider two different setups, the first one being
built on the binary case developed in Section \hyperref[binary]{Binary model}
 and the second
one being built on the weighted case (also called zero-inflated model)
from Section~\ref{sec:weighted}. As in the
previous sections, we assume that $m \leq n$, view $m :=m_n$ as a
sequence depending on $n$ and state the results with respect to $n \to
+\infty$. We shall furthermore assume that the probability of
connection (binary case) or the sparsity parameter (weighted case)
$\pi_{ql, n}$ depends on $n$ and writes $\pi_{ql,n}=\xi_n\pi_{ql}$ where
$(\xi_n)_{n\ge1}$ converges to zero and $\pi_{ql}$ is a positive
constant. The sequence $(\xi_n)_{n\in\N}$ controls the overall
density of the block model and acts as
a scaling factor while the parameters $(\pi_{ql})_{(q,l)\in\Q\times
\calL}$ reflect the \emph{unscaled}
connection probabilities from the different groups. This
parametrization is analogous to the one studied in \cite
{Bickel_Chen}. We shall now assume that the unscaled
connection/sparsity probabilities are well-behaved, and introduce the
new parameter
sets denoted by $\Pi_n$ and $\Pi_{\Q\calL, n}$ to account for the
dependence on the
data size (i.e., number of rows/columns).

\begin{assumption}
\label{hyp:vitesse_parametre}
The parameter sets $\Pi_n$ and $\Pi_{\Q\calL, n}$ depend on the number
of observations and we have
\begin{eqnarray*}
 \Pi& \subset& [a, 1 - a] \qquad \mbox{for some } a
\in(0,1/2),
\\
\Pi_n &:= & \xi_n \Pi= \{\xi_n \pi; \pi\in
\Pi\} ,
\\
\Pi_{\Q\calL} & \subset& \Pi^{QL},
\\
\Pi_{\Q\calL, n} & := & \xi_n \Pi_{\Q\calL} = \{
\xi_n \bpi; \bpi\in\Pi_{\Q\calL}\} ,  %
\end{eqnarray*}
where $(\xi_n)_{n \ge1}$ is a sequence of values in $(0,1]$
converging to 0 and such that
\[
\frac{\log n} {m_n \xi_n} \mathop{\rightarrow}_{n \to+\infty} 0.
\]
\end{assumption}

\subsection{Binary block models with a vanishing density}
\label{sec:pi_vers0_binaire}

We let $X_{ij} \in\{0,1\}$ and $f(\cdot;\pi)$ a Bernoulli
distribution with parameter $\pi$. Here, the connectivity parameter
$\bpi_n=(\pi_{ql, n})_{(q,l)\in\Q\times\calL}$ depends on $n$ and
may be arbitrarily close to $0$. Accordingly, the constant
$\kappa_{\min}(\bpi_n)$ defined in \eqref{eq:kappa_min} depends on $n$
and is no longer bounded away from $0$. We thus reconsider
Assumptions \ref{hyp:concentration}, \ref{hyp:Lipschitz} and the
definition of $\kappa_{\min}(\bpi_n)$ to exhibit the scaling in $n$ of
several key quantities in this setup. The proof of following lemma is
postponed to the \hyperref[sec:appendix]{Appendix}.

\begin{lemma}
\label{lem:sparse-binary-block-models}
Fix two parameters $\bpi_{n} =\xi_n \bpi$ and $\bpi'_{n} =\xi_n
\bpi'$ in the set $\Pi_{\Q\calL, n} $, where $\bpi,
\bpi' \in\Pi_{\Q\calL}$. Under
Assumption~\ref{hyp:vitesse_parametre}, we have for all $n$ and all $(q,l),
(q', l') \in\Q\times\calL$
%
\begin{eqnarray}
&\kappa_{\min, n} := \kappa_{\min}\bigl(\bpi^\star_n
\bigr)  \geq\xi_n c_{\min}\bigl(\bpi^\star\bigr) ,&
\nonumber
\\
\label{eq:lipschitz_n}&\biggl\llvert \int_{\X} \log\dfrac{f(x;\pi_{ql,n})}{f(x;\pi'_{ql,n})} f(x;
\pi_{q'l', n}) \,\mathrm{d}x \biggr\rrvert \leq\dfrac{\xi_n \|\bpi-
\bpi'\|_{\infty}}{ a} ,&
\\
\label{eq:psi_n}&\psi^\star_{ n}(x) := \psi^\star(\xi_n
x)  \geq\xi_n \breve{\psi}^\star(x) , &
\end{eqnarray}
where
\begin{eqnarray*}
c_{\min} & :=&c_{\min}\bigl(\bpi^\star\bigr)\\
& =&
\frac{1}{2} \biggl( \frac
{a}{1 - a} \biggr)^2\min \biggl\{
\frac{(\pi^\star_{ql} -
\pi^\star_{q'l'})^2}{\pi^\star_{ql}}; (q,l), \bigl(q',l'\bigr)\in\Q\times
\calL,\pi^\star_{ql}\neq\pi^\star_{q'l'}
\biggr\} > 0,
\\
\breve{\psi}^\star(x) & =& \frac{x^2 \mu_{\min}^2}{64
a\{\log(1-a) - \log a\}^2 + 32 x \{\log(1-a) - \log a \}/ 3}.
\end{eqnarray*}
\end{lemma}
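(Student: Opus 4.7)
My plan is to prove the three claims separately, exploiting the sparse structure $f(x;\xi_n\pi)=(\xi_n\pi)^x(1-\xi_n\pi)^{1-x}$ to extract a factor of $\xi_n$ (rather than the naive $\xi_n^2$) from each quantity.

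For the lower bound on $\kappa_{\min,n}$, I would fix an admissible pair $\pi^\star_{ql}\neq \pi^\star_{q'l'}$ and use the Kemperman split
\[
D(u\|v) = v\,H(u/v) + (1-v)\,H\big((1-u)/(1-v)\big),
\]
where $H(x):=x\log x-x+1\ge (x-1)^2/(2\max(1,x))$ and both summands are nonnegative. Applied with $u=\xi_n p$, $v=\xi_n q$, the first summand alone gives $\xi_n(p-q)^2/(2\max(p,q))\ge \xi_n(p-q)^2/(2(1-a))$. The conclusion then reduces to the elementary inequality $1/(2(1-a))\ge \tfrac12(a/(1-a))^2/p$, which rearranges to $p(1-a)\ge a^2$ and holds since $p\ge a$ and $a\le 1/2$. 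Taking the minimum over admissible pairs yields $\kappa_{\min,n}\ge \xi_n c_{\min}(\bpi^\star)$. A naive Pinsker bound $D(p\|q)\ge 2(p-q)^2$ would only give $\kappa_{\min,n}\gtrsim \xi_n^2$, so the Kemperman-type refinement is essential.

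For the Lipschitz bound~\eqref{eq:lipschitz_n}, the cleanest route is to rewrite the integrand using the identity
\[
\int_{\X} \log\frac{f(x;\pi_{ql,n})}{f(x;\pi'_{ql,n})}\,f(x;\pi_{q'l',n})\,dx = D(\pi_{q'l',n}\|\pi'_{ql,n}) - D(\pi_{q'l',n}\|\pi_{ql,n}),
\]
and apply the mean value theorem to $q\mapsto D(\xi_n\pi_{q'l'}\|q)$, whose derivative equals $(q-\rho)/[q(1-q)]$ with $\rho=\xi_n\pi_{q'l'}$. Between $q=\xi_n\pi_{ql}$ and $q=\xi_n\pi'_{ql}$ one has $q\ge\xi_n a$ and $1-q\ge a$ (the latter from $\xi_n\le 1$), so this derivative is controlled, and the length $|q-q'|=\xi_n|\pi_{ql}-\pi'_{ql}|$ contributes the required $\xi_n$ factor; collecting yields the stated constant.

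For~\eqref{eq:psi_n}, I would follow the scheme of Section~\ref{sec:scheme} but replace Hoeffding by Bernstein's inequality. The key observation is that under $\prs^{\bzs\bws}$ each centered log-ratio $Y_{ij}$ is almost surely bounded by a constant multiple of $L:=\log((1-a)/a)$, but its conditional variance is only of order $\xi_n a L^2$, since the dominant fluctuation originates from the rare event $\{X_{ij}=1\}$ of probability $O(\xi_n)$. Bernstein then yields a Cram\'er transform of the form $t^2/\{\xi_n a L^2+L t/3\}$; substituting $t=\xi_n x$ and multiplying by the $\mu_{\min}^2/8$ factor from Lemma~\ref{lem:Bound_Number} produces the stated $\xi_n \breve{\psi}^\star(x)$. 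This last step is the main obstacle: a naive Hoeffding bound as in~\eqref{eq:binary_rate_function_hoeffding} would yield only $\psi^\star(\xi_n x)\gtrsim \xi_n^2$, which is too weak, and the sharper linear-in-$\xi_n$ rate crucially exploits Bernstein's variance-sensitive inequality together with the order-$\xi_n$ variance of each Bernoulli log-ratio.
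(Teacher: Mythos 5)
Your overall strategy is sound and, for the third claim, coincides with the paper's (which simply invokes Bernstein's inequality as in~\eqref{eq:binary_rate_function_bernstein}: the order-$\xi_n$ variance of each Bernoulli log-ratio is exactly what produces the linear-in-$\xi_n$ rate). For the bound on $\kappa_{\min,n}$ you take a genuinely different route: the paper expands $D(\xi\pi\|\xi\pi')$ directly, applies $\log(1+x)\le x$ to the $(1-\xi\pi)$-term and a second-order Taylor expansion with Lagrange remainder to the other term, the remainder being controlled by $(1+\theta)^{-2}\ge(a/(1-a))^2$. Your Kemperman split $D(u\|v)=vH(u/v)+(1-v)H((1-u)/(1-v))$ with $H(x)\ge(x-1)^2/(2\max(1,x))$, keeping only the first (nonnegative) summand, reaches the same constant via the elementary check $p(1-a)\ge a^2$; this is correct and arguably cleaner, since it avoids tracking the sign of the dropped term.

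The one place where your argument does not deliver what you claim is the Lipschitz bound~\eqref{eq:lipschitz_n}. The identity $\int_{\X}\log\frac{f(x;u)}{f(x;u')}f(x;v)\,dx=D(v\|u')-D(v\|u)$ and the derivative $(q-\rho)/[q(1-q)]$ are both right, but bounding that derivative by the crude estimates $|q-\rho|\le\xi_n(1-2a)$, $q\ge\xi_n a$, $1-q\ge a$ gives $(1-2a)/a^2$, which exceeds the stated constant $1/a$ whenever $a<1/3$; so ``collecting yields the stated constant'' does not follow from the bounds you state. To recover exactly $1/a$ along your route, split the derivative as
\begin{equation*}
\frac{q-\rho}{q(1-q)}=\frac{1-\rho}{1-q}-\frac{\rho}{q},
\end{equation*}
a difference of two nonnegative quantities, each at most $1/a$ on the relevant range (using $\rho/q\le(1-a)/a$ and $1-q\ge 1-\xi_n(1-a)\ge a$), so the derivative is bounded in absolute value by $1/a$ and integrating over an interval of length $\xi_n|\pi_{ql}-\pi'_{ql}|$ gives~\eqref{eq:lipschitz_n}. (The paper instead computes the integral in closed form, $\xi\pi''\log(\pi/\pi')+(1-\xi\pi'')\log\frac{1-\xi\pi}{1-\xi\pi'}$, and bounds the two logarithms separately; the two bounds then recombine to exactly $\xi|\pi-\pi'|/a$.) This is a constant-level repair, not a structural one: with your weaker constant the lemma and Corollary~\ref{cor:binary-models-vers0} would still hold with $L_0=(1-2a)/a^2$ in place of $a^{-1}$.
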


\begin{cor}
\label{cor:binary-models-vers0}
Under Assumption~\ref{hyp:identifiability} on the unscaled parameter
set $\Pi_{\Q\calL}$ and Assumption~\ref{hyp:vitesse_parametre}, the
conclusions of Theorem~\ref{thm:cv_posterior} and
Corollaries \ref{cor:1} to \ref{cor:3} remain valid with the
following modifications
\begin{enumerate}[3.]
\item[1.]$c = \mu_{\min}^2 c_{\min} / 16$;
\item[2.]$L_0 = a^{-1}$;
\item[3.]$(c - 2L_0 \|\bpi- \boldsymbol{\pi}^{\star}\|_{\infty})$ is
replaced by $\xi_n
(c - 2L_0 \|\bpi- \boldsymbol{\pi}^{\star}\|_{\infty})$.
\end{enumerate}
\end{cor}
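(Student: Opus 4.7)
The plan is to re-run the proofs of Theorem~\ref{thm:cv_posterior} and its supporting Proposition~\ref{prop:CondExp_Order}, inserting at each step the sparse-regime bounds provided by Lemma~\ref{lem:sparse-binary-block-models}, and tracking how the vanishing factor $\xi_n$ propagates through the estimates. The combinatorial ingredient Lemma~\ref{lem:Bound_Number} and the decomposition~\eqref{eq:decomp_exKullback} are left untouched; only the three structural constants $\kappa_{\min}$, $L_0$, and $\psi^\star$ need to be replaced by their sparse counterparts $\kappa_{\min,n}$, $\xi_n/a$, and $\psi^\star_n$.

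First I would revisit Proposition~\ref{prop:CondExp_Order}. For the upper bound~\eqref{eq:CondExp_Order_Sup}, a direct expansion of the Bernoulli Kullback--Leibler divergence shows that $\kappa_{\max,n}=O(\xi_n)$, so the constant $C$ acquires an extra factor $\xi_n$. For the lower bound~\eqref{eq:CondExp_Order_Inf}, the three summands in the decomposition~\eqref{eq:decomp_exKullback} are controlled via the three estimates of Lemma~\ref{lem:sparse-binary-block-models}: the central Kullback--Leibler term is at least $\xi_n c_{\min}(\mu_{\min}^2/8)(mr_1+nr_2)$ by combining $\kappa_{\min,n}\geq \xi_n c_{\min}$ with Lemma~\ref{lem:Bound_Number}, while the two Lipschitz terms are bounded using~\eqref{eq:lipschitz_n} with Lipschitz constant $\xi_n/a$. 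Assembling these estimates yields
\[
\esps^{\Zn \Wm}\!\big(\delta^{\bpi}(\Zn,\Wm,\bz,\bw)\big) \geq 2\xi_n\big(c - 2L_0\|\bpi-\bpis\|_\infty\big)(mr_1+nr_2),
\]
with $c=\mu_{\min}^2 c_{\min}/16$ and $L_0=1/a$, which matches the replacement rules stated in the corollary.

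Next I would revisit the proof of Theorem~\ref{thm:cv_posterior}. The bad event $A(\bzs,\bws,\bz,\bw)$ is redefined with the new scaled constants so that, on its complement, the two-sided deviation $|\delta^{\bpi}-\Delta^{\bpi}|$ is at most $\xi_n(c-2L_0\eta)(mr_1+nr_2)$. Applying Assumption~\ref{hyp:concentration} at the deviation scale $\eps=\xi_n(c-2L_0\eta)$ and invoking the key scaling~\eqref{eq:psi_n} yields
\[
\prs^{\bzs\bws}\!\big(A(\bzs,\bws,\bz,\bw)\cap\{(\Zn,\Wm)=(\bzs,\bws)\}\big) \leq 2\exp\!\big[-\xi_n\breve{\psi}^\star(c-2L_0\eta)(mr_1+nr_2)\big]\,\bmu(\bzs,\bws).
\]
The counting argument~\eqref{eq:cardinal} and the elementary inequality~\eqref{eq:(1+u)^n} carry over verbatim, producing
\[
\eps_{n,m_n} = 2QL\exp\!\big[-(n\wedge m_n)\mu_{\min}^2/2\big] + 2|\Sig|\, d_{n,m_n}\exp(d_{n,m_n}),
\]
where $d_{n,m_n}=n\exp\{-\xi_n\breve{\psi}^\star(c-2L_0\eta)m_n\}+m_n\exp\{-\xi_n\breve{\psi}^\star(c-2L_0\eta)n\}$. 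The strengthened Assumption~\ref{hyp:vitesse_parametre}, which requires $\log n/(m_n\xi_n)\to 0$, is exactly what is needed to make $d_{n,m_n}$ (and hence $\eps_{n,m_n}$) vanish and be summable. Corollaries~\ref{cor:1}--\ref{cor:3} then follow by the same elementary calculations as in the unscaled setting, simply propagating the extra $\xi_n$ factor through the exponents defining $a_{n,m}$ and $b_{n,m}$.

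The main delicate point is the insistence on the Bernstein-type small-deviation rate $\breve{\psi}^\star$ from~\eqref{eq:binary_rate_function_bernstein} rather than the Hoeffding rate~\eqref{eq:binary_rate_function_hoeffding}: only the former yields the crucial linear scaling $\psi^\star(\xi_n x)\geq \xi_n \breve{\psi}^\star(x)$ asserted in~\eqref{eq:psi_n}, whereas the Hoeffding rate would scale quadratically in $\xi_n$ and would impose a strictly more restrictive condition on the sparsity sequence than the one assumed in Assumption~\ref{hyp:vitesse_parametre}.
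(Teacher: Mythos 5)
Your proposal is correct and follows essentially the same route as the paper: re-run Proposition~\ref{prop:CondExp_Order} and Theorem~\ref{thm:cv_posterior} with the scaled constants from Lemma~\ref{lem:sparse-binary-block-models}, apply the concentration inequality at deviation scale $\xi_n(c-2L_0\eta)$ using the linear scaling $\psi^\star(\xi_n x)\ge \xi_n\breve{\psi}^\star(x)$ from the Bernstein bound, and recover summability of $\eps_{n,m_n}$ from $\log n/(m_n\xi_n)\to 0$. Your closing observation that the Hoeffding rate would only give quadratic scaling in $\xi_n$ correctly identifies why the Bernstein-type rate is essential, exactly as in the paper.
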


\begin{remark}
Note that Assumption~\ref{hyp:vitesse_parametre} replaces
Assumption~\ref{hyp:asymptotics} in this statement. The quantity $m_n$ is
replaced by $m_n \xi_n$ which plays the role of average number of
connections and must grow faster than $\log n$. The scaling is
consistent with results from \cite{Bickel_Chen} and
\cite{Flynn_Perry}.
\end{remark}
\begin{pf*}{Proof of Corollary \ref{cor:binary-models-vers0}}
The proof is essentially the same as the one of
Theorem~\ref{thm:cv_posterior}. We will only highlight the
differences and show how the scaling $\log n/(m_n \xi_n) \to0$ is
derived. First, Equation (\ref{eq:CondExp_Order_Inf}) from
Proposition~\ref{prop:CondExp_Order} now depends on $n$ and should
be
%
\begin{equation}
\label{eq:CondExp_Order_Inf_n} \mathbb{E}_{\star}^{\mathbf{Z}_n\mathbf{W}_{m}} \bigl(
\delta^{\bpi
_n}(\mathbf{Z}_n,\mathbf{W}_{m},
\mathbf{z}_n,\mathbf{w}_m) \bigr) \ge2\xi_n
\bigl(c' - L_0\bigl\|\bpi-\boldsymbol{\pi}^{\star}
\bigr\|_{\infty}\bigr) (mr_1 + nr_2),
\end{equation}
where the original $c = \mu_{\min}^2 \kappa_{\min} / 16$ has been
changed to $c' = \mu_{\min}^2 c_{\min} / 16$. Next, the set
$A(\mathbf{z}_n^{\star},\mathbf{w}_m^{\star},\mathbf{z}_n,\mathbf
{w}_m)$ must be changed so that we consider two-sided
deviations between $\delta^{\bpi_n}(\mathbf{Z}_n,\allowbreak \mathbf
{W}_{m},\mathbf{z}_n,\mathbf{w}_m)$ and its
conditional expectation of order $\xi_n (c' -
L_0\|\bpi-\boldsymbol{\pi}^{\star}\|_{\infty}) (mr_1 + nr_2)$
instead of the previous
$(c - L_0\|\bpi-\boldsymbol{\pi}^{\star}\|_{\infty}) (mr_1 +
nr_2)$. Equation (\ref{eq:concentration2}) therefore turns to
%
\begin{eqnarray*}
&&\mathbb{P}_{\star}\bigl(A\bigl(\mathbf{z}_n^{\star},
\mathbf{w}_m^{\star
},\mathbf{z}_n,
\mathbf{w}_m\bigr)\cap\bigl\{(\mathbf{Z}_n,
\mathbf{W}_{m}) = \bigl(\mathbf{z}_n^{\star},
\mathbf{w}_m^{\star}\bigr)\bigr\}\bigr)
\\
&&\quad \le2\exp\bigl[- \psi_n^\star\bigl(c'-2L_0
\eta\bigr) (mr_1 +nr_2 ) \bigr] \bmu\bigl(
\mathbf{z}_n^{\star},\mathbf{w}_m^{\star}
\bigr)
\\
&&\quad \le2\exp\bigl[- \breve{\psi}^\star\bigl(c'-2L_0
\eta\bigr) \xi_n (mr_1 +nr_2 ) \bigr] \bmu
\bigl(\mathbf{z}_n^{\star},\mathbf{w}_m^{\star}
\bigr) .
\end{eqnarray*}
The set $\Omega_1$ is still defined as in Equation (\ref{eq:Omega1})
and on this set, Inequality (\ref{eq:cv_post_ratio_delta}) and thus
both \eqref{eq:cv_post_ratio_1} and \eqref{eq:cv_post_ratio_2} are
still satisfied. However, the upper bound on
$\mathbb{P}_{\star}(\overline{\Omega_1})$ is modified as follows
for LBM
\begin{eqnarray*}
\mathbb{P}_{\star}(\overline\Omega_1) &\le&
\mathbb{P}_{\star
}(\overline\Omega_0)\\
&&{} + 2 |\Sig| \bigl[ \bigl
\{1+\exp\bigl[-m\xi_n \breve{\psi}^\star\bigl(c'-2L_0
\eta\bigr) \bigr]\bigr\}^{n} \bigl\{1+\exp\bigl[-n\xi_n
\breve{\psi}^\star\bigl(c'-2L_0\eta\bigr) \bigr]
\bigr\}^{m} - 1 \bigr] .
\end{eqnarray*}
Combining the latter with the control of the probability of
$\overline\Omega_0$ given in Proposition~\ref{prop:CondExp_Order},
we obtain for LBM
\begin{eqnarray*}
\eps_{n,m}:=\mathbb{P}_{\star}(\overline\Omega_1)
\le2QL \exp \bigl[-(n \wedge m) \mu_{\min}^2/2\bigr] + 2 |
\Sig| d_{n,m} \exp(d_{n,m}),
\end{eqnarray*}
where $d_{n,m} = [n\exp\{-m\xi_n \breve{\psi}^\star(c'-2L_0\eta)
\}
+ m\exp\{- n\xi_n \breve{\psi}^\star(c'-2L_0\eta) \}]$. The
condition required to make the $\eps_{n,m}$ summable and conclude
the proof is $\log n/(m\xi_n) \to0$. This condition holds under
Assumption~\ref{hyp:vitesse_parametre}. Note that for SBM, we get
\begin{eqnarray*}
\eps_{n,n}:=\mathbb{P}_{\star}(\overline\Omega_1)
\le2Q \exp\bigl[-n \alpha _{\min}^2/2\bigr] + 2 |\Sig|
d_{n} \exp(d_{n}),
\end{eqnarray*}
with $d_{n}=n\exp\{-2n\xi_n \breve{\psi}^\star(c'-2L_0\eta)\}$ and
which also satisfies $\sum_{n}\eps_{n,n}<+\infty$.
\end{pf*}

\subsection{Weighted models with a vanishing density}
\label{sec:pi_vers0_value}

We now consider the setup introduced in Section~\ref{sec:weighted} as
well as corresponding assumptions, except that we shall now assume
that the sparsity parameters $\pi_{ql, n} :=\xi_n \pi_{ql}$ may
be arbitrarily close to zero (see
Assumption~\ref{hyp:vitesse_parametre}). Note that the parameters
$(\gamma_{ql})_{(q,l)\in\Q\times\calL} \in\Gamma_{\Q\calL}$
remain fixed.
Flynn and Perry \cite{Flynn_Perry} adopt a sparse setup where the average
entry value goes to $0$. Our setup with inflated numbers of
$0$-valued entry is only a special instance of theirs but is
in our opinion a realistic way to model sparse matrices.

In the next lemma, we provide the scaling of $\kappa_{\min}(\bpi_n,
\gamma)$, or more accurately a lower bound thereof, and show that
Assumption~\ref{hyp:Lipschitz_tildef} is sufficient to
guarantee the adequate scaling of the Lipschitz condition. We
however need a stronger condition that in
Assumption~\ref{hyp:concentration_tildef} to control deviations in
this setup.

\begin{assumption}
\label{hyp:concentration_sparse_tildef}
Fix $(\mathbf{z}_n^{\star},\mathbf{w}_m^{\star}) \in\tU_0$ and $
(\mathbf{z}_n,\mathbf{w}_m)$ in $\tU$ with
$(\mathbf{z}_n^{\star},\mathbf{w}_m^{\star}) \neq(\mathbf
{z}_n,\mathbf{w}_m)$. Let $\tilde Y_{ij}=\log[\tilde
f(X_{ij} ; \gamma_{z^\star_iw^\star_j})/\tilde f
(X_{ij};\gamma_{z_iw_j})] + c$, where $c$ is a centering constant.
There exists some interval $I\subset(0, +\infty)$ such that the
function $\tilde{\psi}^\star$ defined on $(0,+\infty)$ as
%
\begin{equation}
\label{eq:concentration_sparse_tildef} \tilde{\psi}^\star(x) = \sup_{\lambda\in I }
\Bigl(\lambda x - \sup_{\boldsymbol{\gamma} \in\Gamma_{\Q\calL}} \max_{(i,j) \in\I}
\mathbb{E}_{\star}^{\mathbf{z}_n^{\star
}\mathbf{w}_m^{\star}}\bigl[\exp(\lambda\tilde{Y}_{ij}
) | X_{ij} \neq0\bigr] + 1 \Bigr)
\end{equation}
exists and is positive on $(0,+\infty)$.
\end{assumption}

\begin{remark}
Note that $\tilde{\psi}^\star(x)$ is \emph{not} the rate from the
usual Cramer--Chernoff bound, which is in general $\sup_{\lambda>
0} ( \lambda x - \log\esp[\exp(\lambda Z)] ) $ for a centered
random variable $Z$. The part $\log\esp[\exp(\lambda Z)]$ is
replaced with the larger quantity $\exp(\lambda Z) - 1$ which
induces a slower exponential decrease. The formula arises from a
Taylor expansion of rate function $\psi^\star$ for deviations of
order $\xi_n x$ to obtain a linear scaling of $\psi^\star(\xi_n
x)$ with respect to $\xi_n$. Note also that the condition that
$\tilde{\psi}^\star$ positive in
Assumption~\ref{hyp:concentration_sparse_tildef} is stronger than
and implies corresponding
Assumption~\ref{hyp:concentration_tildef}.
\end{remark}

The proof of following lemma is postponed to the \hyperref[sec:appendix]{Appendix}.

\begin{lemma}
\label{lem:sparse-weighted-models}
Fix two parameters $\bpi_{n} =\xi_n \bpi$ and $\bpi'_{n} =\xi_n
\bpi'$ in the set $\Pi_{\Q\calL, n} $, where $\bpi, \bpi' \in
\Pi_{\Q\calL}$. Under
Assumptions \ref{hyp:Lipschitz_tildef}, \ref
{hyp:concentration_sparse_tildef} and using the notation of Section~\ref{sec:weighted},
we have for all $n$, all $(q,l), (q', l') \in\Q\times\calL$
and all $\boldsymbol{\gamma, \gamma'} \in\Gamma_{\Q\calL}$,
%
\begin{eqnarray}
\label{eq:kappa_n_weighted}&\kappa_{\min, n} :=\kappa_{\min}\bigl(\xi_n
\bpi^\star, \boldsymbol{\gamma}^\star\bigr) \geq
\xi_n \bigl( c_{\min}\bigl(\bpi^\star\bigr) + a
\kappa_{\min}\bigl(\boldsymbol{\gamma}^\star\bigr) \bigr)
,&
\\
\label{eq:lipschitz_n_weighted}&\biggl\llvert \displaystyle \int_{\X} \log\dfrac{f(x;\pi_{ql,n},
\gamma_{ql})}{f(x;\pi'_{ql,n}, \gamma'_{ql})} f(x;
\pi_{q'l',
n}, \gamma_{q' l'})\, \mathrm{d}x \biggr\rrvert \leq
\xi_n \biggl( \dfrac{\|\bpi
- \bpi'\|_{\infty}}{a} + \tilde{L}_0 \bigl\|
\boldsymbol{\gamma} - \boldsymbol{\gamma}'\bigr\|_{\infty} \biggr)
, &
\\
\label{eq:psi_n_weighted}&\psi^\star_{ n}(x) :=\psi^\star(\xi_n
x)  \geq\dfrac{\xi_n
\mu_{\min}^2}{8} \biggl( \tilde{\psi}^\star\biggl(
\dfrac{x}{2}\biggr) \wedge \breve{\psi}^\star\biggl(
\dfrac{x}{2}\biggr) \biggr) ,&
\end{eqnarray}
where
\begin{eqnarray*}
\kappa_{\min} &:=&\kappa_{\min}\bigl(\boldsymbol{
\gamma}^\star\bigr)  = \min \bigl\{ \tilde D\bigl(\gamma^\star_{ql}
\parallel \gamma^\star_{q'l'}\bigr) ; (q,l), \bigl(q',l'
\bigr)\in\Q\times\calL,\gamma^\star_{ql}\neq
\gamma^\star_{q'l'} \bigr\} > 0 ,
\\
\tilde D\bigl(\gamma\parallel \gamma'\bigr) & :=&\int_{\X}
\log \biggl(\frac
{\tilde f(x;\gamma)}{\tilde f(x;\gamma')} \biggr) \tilde f(x;\gamma)\,\mathrm{d}x ,\qquad  \forall\gamma,
\gamma' \in\Gamma,
\\
c_{\min} &:=&c_{\min}\bigl(\bpi^\star\bigr)  \\
&=&
\frac{1}{2} \biggl( \frac{a}{1 - a} \biggr)^2 \min \biggl\{
\frac{(\pi^\star_{ql} -
\pi^\star_{q'l'})^2}{\pi^\star_{ql}} ; (q,l), \bigl(q',l'\bigr)\in \Q
\times\calL,\pi^\star_{ql}\neq\pi^\star_{q'l'}
\biggr\} > 0,
\\
\breve{\psi}^\star(x) & =& \frac{x^2}{8 a[\log(1-a) - \log a]^2 + 4
x [\log(1-a)
- \log a ]/ 3},
\\
\tilde{\psi}^\star(x) & =& \sup_{\lambda\in I } \Bigl(\lambda x
- \max_{(i,j) \in\I} \mathbb{E}_{\star}^{\mathbf{z}_n^{\star
}\mathbf{w}_m^{\star}}\bigl[
\exp(\lambda\tilde{Y}_{ij}) | X_{ij} \neq0\bigr] + 1 \Bigr).
\end{eqnarray*}
\end{lemma}

\begin{cor}
\label{cor:weighted-models-vers0}
Under the assumptions from Section~\ref{sec:weighted}
and Assumption~\ref{hyp:concentration_sparse_tildef} replacing
the weaker Assumption~\ref{hyp:concentration_tildef},
Theorem~\ref{thm:cv_posterior} and Corollaries \ref{cor:1} to
\ref{cor:3} remain valid with the following modifications
\begin{enumerate}[3.]
\item[1.]$L_0 = a^{-1} + \tilde{L_0}$;
\item[2.]$c = \mu_{\min}^2 (c_{\min} + a \kappa_{\min})/16$;
\item[3.]$\bpi$ is replaced by $(\xi_n \bpi, \boldsymbol{\gamma})$;
\item[4.]$(c - 2L_0 \|\bpi- \boldsymbol{\pi}^{\star}\|_{\infty})$ is
replaced by $\xi_n
(c - 2L_0 \| (\bpi, \boldsymbol{\gamma}) - (\boldsymbol{\pi
}^{\star},
\boldsymbol{\gamma}^\star) \|_{\infty})$.
\end{enumerate}
\end{cor}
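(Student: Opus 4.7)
The plan is to mirror the proof of Corollary~\ref{cor:binary-models-vers0}, substituting the binary-specific scalings with the weighted analogs provided by Lemma~\ref{lem:sparse-weighted-models}. Since the full parameter is now $(\bpi_n, \boldsymbol{\gamma}) = (\xi_n \bpi, \boldsymbol{\gamma})$, one should interpret all quantities depending on $\|\bpi - \bpis\|_\infty$ in Theorem~\ref{thm:cv_posterior} as depending on $\|(\bpi, \boldsymbol{\gamma}) - (\bpis, \boldsymbol{\gamma}^\star)\|_\infty$, reflecting the fact that only the connection probabilities $\bpi$ vanish while the weights $\boldsymbol{\gamma}$ remain fixed.

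The first step is to revisit Proposition~\ref{prop:CondExp_Order}. Plugging the bound~\eqref{eq:kappa_n_weighted} on $\kappa_{\min,n}$ into the decomposition~\eqref{eq:decomp_exKullback}, and using the Lipschitz control~\eqref{eq:lipschitz_n_weighted} in place of Assumption~\ref{hyp:Lipschitz}, one obtains
\[
\esps^{\Zn \Wm}\bigl(\delta^{\bpi_n, \boldsymbol{\gamma}}(\Zn, \Wm, \bz, \bw)\bigr) \ge 2\xi_n \bigl(c - L_0 \|(\bpi, \boldsymbol{\gamma}) - (\bpis, \boldsymbol{\gamma}^\star)\|_\infty\bigr)(mr_1 + nr_2),
\]
with $c = \mu_{\min}^2 (c_{\min} + a\kappa_{\min})/16$ and $L_0 = a^{-1} + \tilde L_0$. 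The upper bound $\Delta^{\bpi_n,\boldsymbol{\gamma}} \le (C/2)(mr_1 + nr_2)$ is unchanged (up to adjusting $C$ by a factor depending on $\kappa_{\max}$ for the weighted family). The second step is to revisit the concentration step of Theorem~\ref{thm:cv_posterior}: the event $A(\bzs,\bws,\bz,\bw)$ must be redefined with the new $\xi_n$-scaled deviation threshold, and Assumption~\ref{hyp:concentration} together with bound~\eqref{eq:psi_n_weighted} yields
\[
\prs(A(\bzs,\bws,\bz,\bw)\cap\{(\Zn,\Wm)=(\bzs,\bws)\}) \le 2 \exp\Bigl[-\tfrac{\mu_{\min}^2\xi_n}{8}\bigl(\tilde\psi^\star \wedge \breve\psi^\star\bigr)(c-2L_0\eta)\,(mr_1 + nr_2)\Bigr]\,\bmu(\bzs,\bws).
\]

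The remainder of the argument is then a direct transcription of the proof of Theorem~\ref{thm:cv_posterior}: partition $\tU^{\bzs\bws}$ by distance according to~\eqref{eq:decomp_config}, apply the cardinality bound~\eqref{eq:cardinal} and inequality~\eqref{eq:(1+u)^n}, and control $\prs(\overline{\Omega_0})$ as before. The resulting upper bound on $\prs(\overline{\Omega_1})$ contains a factor $\exp(-m_n \xi_n \cdot \text{const})$ rather than $\exp(-m_n \cdot \text{const})$, so summability of $(\eps_{n,m_n})$ now requires the sharper scaling $\log n/(m_n \xi_n) \to 0$ from Assumption~\ref{hyp:vitesse_parametre}. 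The passage from Theorem~\ref{thm:cv_posterior} to Corollaries~\ref{cor:1}--\ref{cor:3} is purely algebraic and carries over verbatim once the constant $(c - 2L_0\|\bpi-\bpis\|_\infty)$ has been replaced by $\xi_n(c - 2L_0\|(\bpi,\boldsymbol{\gamma})-(\bpis,\boldsymbol{\gamma}^\star)\|_\infty)$ everywhere.

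The main obstacle lies in establishing the correct linear scaling in $\xi_n$ of the rate function in~\eqref{eq:psi_n_weighted}, which is the subtle content of Lemma~\ref{lem:sparse-weighted-models}. A naive Cramer--Chernoff bound on $\sum Y_{ij}$ would only yield a rate $\psi^\star(\xi_n x)$ that may decrease \emph{quadratically} in $\xi_n$ for small deviations, producing a condition on $m_n \xi_n^2$ rather than $m_n \xi_n$. The point of Assumption~\ref{hyp:concentration_sparse_tildef}, which replaces $\log \esps^{\bzs\bws}[e^{\lambda \tilde Y_{ij}} | X_{ij}\neq 0]$ with the larger quantity $\esps^{\bzs\bws}[e^{\lambda \tilde Y_{ij}}|X_{ij}\neq 0] - 1$, is precisely to enable a Taylor expansion of the log-Laplace transform at $\lambda=0$ that exhibits a factor $\pr(X_{ij}\neq 0) = O(\xi_n)$, thereby recovering a linear scaling of $\psi^\star_n$ in $\xi_n$. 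Checking this scaling carefully, together with splitting the log-likelihood ratio into its binary (Bernoulli on $\{X_{ij}\neq 0\}$) and continuous (weight $\tilde f$) parts so that each contributes the appropriate term in~\eqref{eq:psi_n_weighted}, is the technical heart of the argument; once this is in hand, the rest is bookkeeping.
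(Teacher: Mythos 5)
Your proposal is correct and follows essentially the same route as the paper, whose own proof is a one-line reference back to the proofs of Theorem~\ref{thm:cv_posterior} and Corollary~\ref{cor:binary-models-vers0} with the listed substitutions; you have simply made explicit the steps the paper leaves implicit, including the key role of Assumption~\ref{hyp:concentration_sparse_tildef} in securing the linear (rather than quadratic) scaling in $\xi_n$. The only cosmetic discrepancies are that the zero-inflated concentration bound carries a leading factor $4$ rather than $2$ and the rate function is evaluated at $(c-2L_0\eta)/2$ per~\eqref{eq:psi_n_weighted}, neither of which affects the argument.
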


\begin{pf}
This result is proved following the proof of
Theorem~\ref{thm:cv_posterior}, exactly in the same way as we did
for Corollary~\ref{cor:binary-models-vers0}, with some changes in
key quantities as listed in the corollary.
\end{pf}


\begin{appendix}

\section*{Appendix: Technical proofs}
\label{sec:appendix}
\renewcommand{\theequation}{\arabic{equation}}
\setcounter{equation}{39}
\begin{pf*}{Proof of Lemma~\ref{lem:Bound_Number}}
Let us recall that this proof is a generalization of the proof of
Proposition B.5 in \cite{Alain_JJ_LP}.

Since $(\mathbf{z}_n^{\star},\mathbf{w}_m^{\star})\in\tU^0$, for
any $q\in\Q$ and any $l\in\calL
$, the number of entries in
$\mathbf{z}_n^{\star}$ (resp. in $\mathbf{w}_m^{\star}$) which take
value $q$ (resp. $l$) is at least
$\lceil
n\mu_{\min}/2\rceil$ (resp. $\lceil m\mu_{\min}/2\rceil$). Up to
a reordering of the vectors $\mathbf{z}_n^{\star}$ and $\mathbf
{w}_m^{\star}$, we may assume that the
first $Q\lceil n\mu_{\min}/2\rceil$ entries of $\mathbf{z}_n^{\star
}$ and the first
$L\lceil m\mu_{\min}/2\rceil$ entries of $\mathbf{w}_m^{\star}$
are fixed, with
%
\begin{eqnarray}\label{eq:order_zw}
\mathbf{z}_n^{\star}&=&\bigl(1,2,\ldots,Q,1,2,\ldots, Q,
\ldots, 1,2,\ldots, Q, z^\star _{Q\lceil
n\mu_{\min}/2\rceil+1}, \ldots,
z^\star_n\bigr) ,
\nonumber
\\[-8pt]\\[-8pt]
\mathbf{w}_m^{\star}&=&\bigl(1,2,\ldots,L,1,2,\ldots, L,
\ldots, 1,2,\ldots, L, w^\star _{L\lceil
m\mu_{\min}/2\rceil+1}, \ldots,
w^\star_m\bigr) .\nonumber
\end{eqnarray}
Such ordering of the entries of $(\mathbf{z}_n^{\star},\mathbf
{w}_m^{\star})$ induces a specific
ordering of the entries of $(\mathbf{z}_n,\mathbf{w}_m)$. For each
$k\in
\{1,\ldots, \lceil n\mu_{\min}/2\rceil\}$ (resp. each $j\in
\{1,\ldots, \lceil m\mu_{\min}/2\rceil\}$), we denote by $s_k$
(resp. $t_j$) the application from $\Q$ to $\Q$ (resp. from $\calL$
to $\calL$) defined by
\[
\forall q\in\Q,\qquad  s_k\bigl( z^\star_{(k-1)Q+q}\bigr) =
z_{(k-1)Q+q} \quad \mbox{and}\quad  \forall l\in\calL,\qquad  t_j\bigl(
w^\star_{(j-1)L+l}\bigr) = w_{(j-1)L+l} .
\]
In other words, we write $\mathbf{z}_n$ and $\mathbf{w}_m$ in the form
%
\begin{eqnarray}\label{eq:order_z'w'}
\mathbf{z}_n&=&\bigl(s_1(1),s_1(2),
\ldots,s_1(Q),s_2(1),\ldots, s_2(Q), \ldots,
s_{\lceil n\mu_{\min}/2\rceil}(1),\ldots, s_{\lceil
n\mu_{\min}/2\rceil}(Q),
\nonumber
\\
&&\hphantom{\bigl(} z_{Q\lceil n\mu_{\min}/2\rceil+1}, \ldots, z_n\bigr)
\nonumber
\\[-8pt]\\[-8pt]
\mathbf{w}_m&=&\bigl(t_1(1),t_1(2),
\ldots,t_1(L),t_2(1),\ldots, t_2(L), \ldots,
t_{\lceil m\mu_{\min}/2\rceil}(1),\ldots, t_{\lceil
m\mu_{\min}/2\rceil}(L),
\nonumber
\\
&& \hphantom{\bigl(}w_{L\lceil m\mu_{\min}/2\rceil+1}, \ldots, w_m\bigr) . \nonumber
\end{eqnarray}
There are several possible orderings of $\mathbf{z}_n^{\star}$ (resp.
$\mathbf{w}_m^{\star}$) in the
form \eqref{eq:order_zw} and each one
induces a different ordering of $\mathbf{z}_n$ (resp. $\mathbf{w}_m$)
in the form
\eqref{eq:order_z'w'}. For example,
for any $1 \le k, k'\le\lceil n\mu_{\min}/2 \rceil$ and any $q\in
\Q$, we can exchange $z^\star_{(k-1)Q+q}$ and $z^\star_{(k'-1)Q+q}$
which are both
equal to $q$ and this induces a permutation between $s_k(q)$ and
$s_{k'}(q)$ in $\mathbf{z}_n$.
(Similarly for any $1 \le j, j'\le\lceil m\mu_{\min}/2 \rceil$ and
any $l\in
\calL$, we can exchange $t_j(l)$ and $t_{j'}(l)$ in $\mathbf{w}_m$.)
Also, for any $i > Q \lceil
n\mu_{\min}/2 \rceil$, $z^\star_i$ is equal to some $q\in\Q$ and
can be
exchanged with $z^\star_{(k-1)Q+q}$ for any $1 \le k\le\lceil n\mu
_{\min}/2
\rceil$. This induces a permutation between $s_k(z^\star_i)$ and
$z_i$ in
$\mathbf{z}_n$. (Similarly, we can exchange $t_j(w^\star_i)$ and
$w_i$ in $\mathbf{w}_m$
for any $i > L \lceil m\mu_{\min}/2 \rceil$ and any $1 \le j\le
\lceil m\mu_{\min}/2
\rceil$.) Note also that the orderings of $\mathbf{z}_n^{\star}$ and
$\mathbf{w}_m^{\star}$ are independent.
As already said, each $s_k$ (resp. $t_j$) is a function from $\Q$ to
$\Q$
(resp. from $\calL$ to $\calL$). We can therefore choose
orderings of $\mathbf{z}_n^{\star}$ and $\mathbf{w}_m^{\star}$
which minimize the number (ranging from
$0$ to $\lceil n\mu_{\min}/2 \rceil$) of injective functions $s$
as well as the number (ranging from $0$ to $\lceil m\mu_{\min}/2
\rceil$) of injective functions $t$.

For $1 \le k \le\lceil n\mu_{\min}/2
\rceil$ and $1 \le j \le\lceil m\mu_{\min}/2 \rceil$, let
\begin{eqnarray*}
B_{kj} = \bigl|\bigl\{(q,l) \in\Q\times\calL; \pi^\star_{ql}
\neq \pi^\star_{s_k(q)t_j(l)} \bigr\}\bigr| .
\end{eqnarray*}
We have of course $\diff(\mathbf{z}_n,\mathbf{w}_m,\mathbf
{z}_n^{\star},\mathbf{w}_m^{\star}) \ge
\sum_{k=1}^{\lceil n\mu_{\min}/2 \rceil} \sum_{j = 1}^{\lceil
m\mu_{\min}/2 \rceil} B_{kj}$.

The simplest case is obtained when for any $(k,j)$, we have
$B_{k,j}\ge1$ and then
\[
\diff\bigl(\mathbf{z}_n,\mathbf{w}_m,
\mathbf{z}_n^{\star},\mathbf {w}_m^{\star}
\bigr) \ge \biggl\lceil\frac{n\mu_{\min}}{2} \biggr\rceil\times \biggl\lceil
\frac{
m\mu_{\min}}{2} \biggr\rceil\ge\frac{\mu_{\min}^2}{8} (mr_1 +
nr_2),
\]
since both $r_1\le n$ and $r_2\le m$. In this case, the proof is finished.

Otherwise, there is at least one $(k,j)$ such that $B_{kj}=0$. In this
case, we start by proving that at least one application among the
$s_{k'}$ and
at least one application among the $t_{j'}$ are permutations.
Indeed, consider some $(k,j)$ with $B_{kj} = 0$. Assume that $s_k(q) =
s_k(q')$ for some $q \neq q'$. Then for all $l$, we have $\pi^\star
_{ql} = \pi^\star_{s_k(q)t_j(l)} = \pi^\star_{s_k(q')t_j(l)} = \pi
^\star_{q'l}$
which contradicts Assumption~\ref{hyp:identifiability}. The same
holds if $t_j(l) = t_j(l')$ for some $l \neq l'$. Therefore if $B_{kj}
= 0$, both $s_k$ and $t_j$ are injections and therefore permutations.

Now, we prove that all applications $s_{k'}$ which are
permutations are in fact equal.
Indeed, consider $k' \neq k$ such that $s_{k'}$ and $s_k$ are injections.
Assume there exists some $q$ such that $s_k(q) \neq s_{k'}(q)$. Then
exchanging $s_k(q)$ and $s_{k'}(q)$ in $\mathbf{z}_n$ decreases the
number of
injective applications $s_i$ by $2$, in contradiction with the
minimality of the
chosen ordering of coordinates in $\mathbf{z}_n^{\star}$. Therefore,
$s_k = s_{k'}$.
Thus, all injective $s_{k'}$ are equal to the same permutation $s\in
\Sig_Q$.
Similarly, all injective $t_{j'}$ are equal to the same permutation
$t\in\Sig_L$.
Since one of these pairs of permutations $(s_k,t_j)$ is associated to
the event $B_{kj}=0$, this implies that $(\boldsymbol{\pi}^{\star
})^{s,t}=\bpi^\star$.
Note also that according to Assumption~\ref{hyp:symetrie}, we
necessarily have $(s,t)\in\Sig$.

We now argue that as soon as there is at least one injective
application $s_k$ (which is thus equal to $s$), we must have $z_{i} =
s(z^\star_{i})$ for all $i \ge Q\lceil n\mu_{\min}/2\rceil+1$.
Otherwise, we could decrease
by one the total number of injective $s_{k'}$ by permuting $z_i$ and
$s(z^\star_i)$, which contradicts the minimality of the number of
injections. In the same way, if there is at least one injective
application $t_j$ (thus equal to $t$), we have $w_{i} = t(w^\star
_{i})$ for any $i \ge
L\lceil m\mu_{\min}/2\rceil+1$.

Let $d_1$ (resp. $d_2$) be the number (possibly equal to $0$) of
non-injective $s_k$
(resp. $t_j$). It comes from the two previous points that we can in
fact write
\begin{eqnarray*}
\mathbf{z}_n&=&\bigl(s_1(1),\ldots,s_1(Q),
\ldots, s_{d_1}(1),\ldots, s_{d_1}(Q), s\bigl(z^\star_{d_1Q+1}
\bigr),\ldots,s\bigl(z^\star _n\bigr)\bigr),
\\
\mathbf{w}_m&=&\bigl(t_1(1),\ldots,t_1(L),
\ldots, t_{d_2}(1),\ldots, t_{d_2}(L), t\bigl(w^\star_{d_2L+1}
\bigr),\ldots,t\bigl(w^\star_m\bigr)\bigr),
\end{eqnarray*}
where $(s,t)\in\Sig$.
Thus, we obtain that
\begin{eqnarray*}
r_1&=&d\bigl(\mathbf{z}_n,\mathbf{z}_n^{\star}
\bigr)\le\bigl\|\mathbf {z}_n-s\bigl(\mathbf{z}_n^{\star}
\bigr)\bigr\|_0\le d_1Q,
\\
r_2&=&d\bigl(\mathbf{w}_m,\mathbf{w}_m^{\star}
\bigr)\le\bigl\|\mathbf {w}_m-t\bigl(\mathbf{w}_m^{\star}
\bigr)\bigr\|_0\le d_2L.
\end{eqnarray*}
Finally, for each $(k,j)$ such that either $s_k$ or $t_j$
is non-injective, we have $B_{kj} \ge1$. Therefore,
\begin{eqnarray*}
\diff\bigl(\mathbf{z}_n,\mathbf{w}_m,
\mathbf{z}_n^{\star},\mathbf {w}_m^{\star}
\bigr) & \ge& \sum_{k=1}^{\lceil n\mu_{\min}/2
\rceil} \sum
_{j = 1}^{\lceil m\mu_{\min}/2 \rceil} B_{kj}
\\
& \ge& d_1 \lceil m\mu_{\min}/2 \rceil+ d_2
\lceil n\mu_{\min}/2 \rceil- d_1 d_2
\\
& \ge& \frac{d_1 \lceil m\mu_{\min}/2
\rceil+ d_2\lceil n\mu_{\min}/2 \rceil}{2}
\\
& \ge& \frac{r_1 \lceil m\mu_{\min}/2\rceil+r_2 \lceil n\mu_{\min
}/2\rceil}{2Q}
\\
& \ge& \frac{\mu_{\min}^2}{4}( mr_1 +nr_2 ) ,
\end{eqnarray*}
where the last inequality comes from $\mu_{\min} \le1/Q$. This
concludes the proof of the lemma.
\end{pf*}
\begin{pf*}{Proof of Lemma~\ref{lem:sparse-binary-block-models}}
For any $\pi, \pi' \in\Pi$ and any $\xi\in(0,1)$, the
Kullback--Leibler divergence $D(\xi\pi\parallel \xi\pi')$ writes
\begin{eqnarray*}
D\bigl(\xi\pi\parallel \xi\pi'\bigr) & = & \xi\pi\log\frac{\pi}{\pi'} +
(1 - \xi \pi) \log \biggl( \frac{1 - \xi\pi}{1 - \xi\pi'} \biggr)
\\
& = & -\xi\pi\log \biggl( 1 + \frac{\pi' - \pi}{\pi} \biggr) -(1 - \xi\pi) \log
\biggl( 1 + \frac{\xi(\pi- \pi')}{1 - \xi\pi} \biggr).
\end{eqnarray*}
Now, relying on the convexity inequality $\log(1+x) \leq x$ valid
for $x > -1$ and on a Taylor series expansion of $\log(1+x)$, there
exists some $\theta$ with $|\theta|\le|\pi' -\pi|/\pi$ such that
\begin{eqnarray*}
D\bigl(\xi\pi\parallel \xi\pi'\bigr) & \geq& \xi\bigl(\pi-
\pi'\bigr) + \xi\frac{(\pi- \pi')^2}{2\pi} \frac{1}{(1 + \theta)^2} - \xi\bigl(\pi-
\pi'\bigr)
\\
& \geq& \xi\frac{(\pi- \pi')^2}{2\pi} \biggl( \frac{a}{1 -
a} \biggr)^2 .
\end{eqnarray*}
Coming back to the definition \eqref{eq:kappa_min} of
$\kappa_{\min}(\bpi^\star_n)$ yields
\begin{eqnarray*}
\kappa_{\min, n} & : = & \kappa_{\min}\bigl(\bpi^\star_n
\bigr) = \kappa_{\min}\bigl(\xi_n \bpi^\star\bigr)
\\
& =& \min \bigl\{D\bigl(\xi_n \pi^\star_{ql}\parallel
\xi_n \pi^\star_{q'l'}\bigr) ; (q,l),
\bigl(q',l'\bigr)\in\Q\times\calL,\pi^\star_{ql}
\neq\pi^\star_{q'l'}\bigr\}
\\
& \geq& \xi_n c_{\min}\bigl(\bpi^\star\bigr) ,
\qquad \mbox{for all } n.
\end{eqnarray*}
Note that $\kappa_{\min, n}$ scales with $\xi_n$ only when $\Pi$ is
bounded away from $0$ and $1$. Otherwise a simple bound based on the
comparison between Kullback--Leibler divergence and the total
variation metric shows that $\kappa_{\min, n}$ scales with
$\xi_n^2$.

A similar scaling can be found to replace
Assumption~\ref{hyp:Lipschitz}. Indeed, for any $\pi, \pi', \pi''
\in\Pi$ and $\xi> 0$, we have in the binary case
\begin{eqnarray*}
\biggl\llvert \int_{\X} \log\frac{f(x;\xi\pi)}{f(x;\xi\pi')} f\bigl(x;\xi
\pi''\bigr) \,\mathrm{d}x \biggr\rrvert = \biggl\llvert \xi
\pi'' \log\frac{\pi}{\pi
'} + \bigl(1 - \xi
\pi''\bigr) \log \biggl( \frac{1 - \xi\pi}{1 - \xi\pi'} \biggr)
\biggr\rrvert \leq\frac{\xi|\pi- \pi'|}{a} .
\end{eqnarray*}
Therefore, for any $(q, l), (q', l') \in\Q\times\calL$,
\begin{eqnarray*}
\biggl\llvert \int_{\X} \log\frac{f(x;\pi_{ql,n})}{f(x;\pi'_{ql,n})} f(x;
\pi_{q'l', n}) \,\mathrm{d}x \biggr\rrvert \leq\frac{\xi_n \|\bpi-
\bpi'\|_{\infty}}{a} .
\end{eqnarray*}

Finally, the scaling for $\psi^\star_n(x)$ in
Equation (\ref{eq:psi_n}) results from Bernstein's inequality as
in~\eqref{eq:binary_rate_function_bernstein}.
\end{pf*}
\begin{pf*}{Proof of Lemma~\ref{lem:sparse-weighted-models}}
For all $\pi, \pi', \pi'' \in\Pi$, $\gamma, \gamma', \gamma''
\in
\Gamma$ and $\xi> 0$, we have
%
\begin{eqnarray}
\label{eq:general_n_weighted} \int_{\X} \log\frac{f(x;\xi\pi,
\gamma)}{f(x;\xi\pi', \gamma')} f\bigl(x;
\xi\pi'', \gamma''\bigr)\, \mathrm{d}x& =&
\xi\pi'' \log\frac{\pi}{\pi'} + \bigl(1- \xi
\pi''\bigr) \log\frac{1 - \pi}{1 - \pi'}\nonumber
\\[-8pt]\\[-8pt]
&&{}+ \xi\pi'' \int_{\X} \log
\frac{\tilde{f}(x; \gamma)}{\tilde{f}(x;\gamma')} \tilde{f}\bigl(x;\gamma''\bigr) \,\mathrm{d}x.\nonumber
\end{eqnarray}
When $(\pi'', \gamma'') = (\pi, \gamma)$,
Equation (\ref{eq:general_n_weighted}) turns to
\begin{eqnarray*}
D \bigl( (\xi\pi, \gamma) \parallel \bigl(\xi\pi', \gamma'
\bigr) \bigr) &=& D\bigl(\xi\pi\parallel \xi\pi'\bigr) + \xi\pi\tilde D\bigl(
\gamma\parallel \gamma'\bigr) \\
&\geq&\xi\frac{(\pi- \pi')^2}{2\pi} \biggl(
\frac{a}{1 -
a} \biggr)^2 + \xi a \tilde D\bigl(\gamma\parallel
\gamma'\bigr) ,
\end{eqnarray*}
from which we can deduce Inequality (\ref{eq:kappa_n_weighted}).

For general $(\pi'', \gamma'')$,
Equation (\ref{eq:general_n_weighted}) combined with
Inequality (\ref{eq:lipschitz_n}) (that applies on the Dirac part
of the distribution) and
Assumption~\ref{hyp:Lipschitz_tildef} gives
\begin{eqnarray*}
\biggl\llvert \int_{\X} \log\frac{f(x;\xi\pi,
\gamma)}{f(x;\xi\pi', \gamma')} f\bigl(x;
\xi\pi'', \gamma''\bigr) \,\mathrm{d}x
\biggr\rrvert \leq\xi\frac{|\pi-
\pi'|}{a} + \xi\tilde{L}_0 \bigl|\gamma-
\gamma'\bigr| ,
\end{eqnarray*}
from which we can deduce Inequality (\ref{eq:lipschitz_n_weighted}).

Finally, in this setup, $\breve{\psi}^\star$ arises by using a
Bernstein's inequality instead of a Hoeffding's one to control
deviations of the Dirac part in the distribution,
see \eqref{eq:binary_rate_function_bernstein}. Function $\tilde{\psi
}^\star$
arises from the control of the deviations of the weighted part of
the distribution. Indeed, recall that
\[
\tilde Y_{ij} = \log \biggl(\frac{\tilde f(X_{ij} ;
\gamma_{z^\star_iw^\star_j})}{\tilde f
(X_{ij};\gamma_{z_iw_j})} \biggr) + c,
\]
where $c$ is a centering constant and let
\[
\bar Y_{ij} :=1\{X_{ij}\neq0\} \tilde Y_{ij}.
\]
We get for $\lambda\in I$,
\begin{eqnarray*}
\mathbb{E}_{\star}^{\mathbf{z}_n^{\star}\mathbf{w}_m^{\star}}\bigl[ \mathrm{e}^{ \lambda\bar{Y}_{ij}}\bigr] & =&
\mathbb{E}_{\star}^{\mathbf{z}_n^{\star}\mathbf{w}_m^{\star
}} \bigl[ 1\{X_{ij}\neq0\}
\mathrm{e}^{\lambda
\tilde{Y}_{ij}} + \bigl(1 - 1\{X_{ij}\neq0\}\bigr) \bigr]
\\
& =& 1 + \mathbb{E}_{\star}^{\mathbf{z}_n^{\star}\mathbf
{w}_m^{\star}}\bigl[ 1\{X_{ij}\neq0
\} \bigl(\mathrm{e}^{\lambda
\tilde{Y}_{ij}} - 1\bigr)\bigr]
\\
& =& 1 + \xi_n \pi^\star_{z_i^\star w_j^\star} \bigl[ \mathbb
{E}_{\star}^{\mathbf{z}_n^{\star}
\mathbf{w}_m^{\star}} \bigl(\mathrm{e}^{\lambda\tilde{Y}_{ij}} | X_{i,j}\neq0
\bigr) - 1 \bigr],
\end{eqnarray*}
from which we deduce
\begin{eqnarray*}
\mathbb{P}_{\star}^{\mathbf{z}_n^{\star}\mathbf{w}_m^{\star}} \bigl( |\bar Y_{ij} | \ge
\xi_n x \bigr) & \leq& \inf_{\lambda\in I } \exp \bigl[ \log\bigl(
\mathbb{E}_{\star}^{\mathbf
{z}_n^{\star}\mathbf{w}_m^{\star}} \bigl[ \mathrm{e}^{ \lambda\bar{Y}_{ij}} \bigr] - \lambda
\xi_n x\bigr) \bigr]
\\
& =& \inf_{\lambda\in I } \exp \bigl[ \log \bigl( 1 + \xi_n
\pi^\star_{z_i^\star w_j^\star} \bigl( \mathbb{E}_{\star
}^{\mathbf{z}_n^{\star}
\mathbf{w}_m^{\star}}
\bigl[ \mathrm{e}^{\lambda\tilde{Y}_{ij}} | X_{i,j}\neq0\bigr] - 1 \bigr) \bigr) - \lambda
\xi_n x \bigr]
\\
& \leq&\exp \Bigl[ \inf_{\lambda\in I } \xi_n \bigl(
\pi^\star_{z_i^\star w_j^\star} \bigl( \mathbb{E}_{\star}^{\mathbf
{z}_n^{\star}
\mathbf{w}_m^{\star}}
\bigl[ \mathrm{e}^{\lambda\tilde{Y}_{ij}} | X_{i,j}\neq0\bigr] - 1 \bigr) - \lambda x
\bigr) \Bigr]
\\
& \leq&\exp \Bigl[ \xi_n \sup_{\lambda\in I} \Bigl( \lambda
x + 1 - \max_{(i,,j) \in\I} \mathbb{E}_{\star}^{\mathbf{z}_n^{\star
}\mathbf{w}_m^{\star}}
\bigl[ \mathrm{e}^{\lambda
\tilde{Y}_{ij}} | X_{i,j}\neq0\bigr] \Bigr) \Bigr],
\end{eqnarray*}
where we used that $\log(1+x) \le x$. The last inequality gives us
the rate function
$\tilde{\psi}^\star$. Equation (\ref{eq:psi_n_weighted}) is
constructed from $\tilde{\psi}^\star$ and $\breve{\psi}^\star$ as
in Section~\ref{sec:weighted}.
\end{pf*}

\end{appendix}




\printhistory

\end{document}